\numberwithin{equation}{section}
\theoremstyle{plain}
\newtheorem{theorem}{Theorem}[section]
\newtheorem{corollary}[theorem]{Corollary}
\newtheorem{lemma}[theorem]{Lemma}
\newtheorem{proposition}[theorem]{Proposition}
\theoremstyle{definition}
\newtheorem{definition}[theorem]{Definition}
\newtheorem{example}[theorem]{Example}
\theoremstyle{remark}
\newtheorem{remark}[theorem]{Remark}
\newcommand{\R}{\mathbb{R}}
\newcommand{\Q}{\mathbb{Q}}
\newcommand{\Z}{\mathbb{Z}}
\newcommand{\N}{\mathbb{N}}
\newcommand{\C}{\mathbb{C}}
\renewcommand{\H}{\mathbb{H}}
\newcommand{\F}{\mathbb{F}}
\newcommand{\zxz}[4]{\begin{pmatrix} #1 & #2 \\ #3 & #4 \end{pmatrix}}
\newcommand{\abcd}{\zxz{a}{b}{c}{d}}
\newcommand{\kzxz}[4]{\left(\begin{smallmatrix} #1 & #2 \\ #3 & #4\end{smallmatrix}\right) }
\newcommand{\kabcd}{\kzxz{a}{b}{c}{d}}
\renewcommand{\Im}{\operatorname{Im}}
\newcommand{\calA}{\mathcal{A}}
\newcommand{\calD}{\mathcal{D}}
\newcommand{\calE}{\mathcal{E}}
\newcommand{\calF}{\mathcal{F}}
\newcommand{\calG}{\mathcal{G}}
\newcommand{\calI}{\mathcal{I}}
\newcommand{\calO}{\mathcal{O}}
\newcommand{\calR}{\mathcal{R}}
\newcommand{\calS}{\mathcal{S}}
\newcommand{\eps}{\varepsilon}
\newcommand{\bs}{\backslash}
\newcommand{\tr}{\operatorname{tr}}
\newcommand{\Sl}{\operatorname{SL}}
\newcommand{\Gl}{\operatorname{GL}}
\newcommand{\SL}{\operatorname{SL}}
\newcommand{\Symp}{\operatorname{Sp}}
\newcommand{\Uni}{\operatorname{U}}
\newcommand{\Aut}{\operatorname{Aut}}
\newcommand{\Mat}{\operatorname{Mat}}
\newcommand{\Spec}{\operatorname{Spec}}
\newcommand{\acn}{\operatorname{acn}}
\newcommand{\dv}{\operatorname{div}}
\newcommand{\Sym}{\operatorname{Sym}}
\newcommand{\GL}{\operatorname{GL}}
\newcommand{\tor}{\operatorname{tor}}
\begin{document}

\title[Formal Siegel modular forms]{Formal Siegel modular forms for arithmetic subgroups}

\author[Jan H.~Bruinier and Martin Raum]{Jan Hendrik Bruinier and Martin Raum}
\address{Technische Universit\"at Darmstadt, Fachbereich Mathematik, Schlossgartenstraße 7,
D--64289 Darmstadt, Germany}
\email{bruinier@mathematik.tu-darmstadt.de}

\address{
Chalmers tekniska högskola och Göteborgs Universitet, Institutionen för
Matematiska vetenskaper, SE--412 96 Göteborg, Sweden
}
\email{martin@raum-brothers.eu}

\subjclass[2020]{11F46, 11F50, 14G35, 14F06}

\thanks{The first author is supported in part by  the
DFG Collaborative Research Centre TRR 326 ``Geometry and Arithmetic of Uniformized Structures'', project number 444845124. The second author was partially supported by Vetenskapsrådet grant 2019-03551 and
2023-04217.
}


\begin{abstract}
The notion of formal Siegel modular forms for an arithmetic subgroup $\Gamma$ of the symplectic group of genus $n$ is a generalization of symmetric formal Fourier-Jacobi series. 
Assuming an upper bound on the affine covering number of the Siegel modular variety associated with $\Gamma$, we prove that all formal Siegel modular forms are given by Fourier-Jacobi expansions of classical holomorphic Siegel modular forms. 
We also show that the required upper bound is always met if $2\leq n \leq 4$.
As an application we consider the case of the paramodular group of squarefree level and genus $2$. 
\end{abstract}

\maketitle



\section{Introduction}


Let $f$ be a holomorphic Siegel modular form  of weight $k$ for the full Siegel modular group $\Gamma_n=\Symp_n(\Z)$ of genus $n$, that is, a holomorphic function on the Siegel upper half space $\H_n$ which transforms in weight $k$ under the action of $\Symp_n(\Z)$. For any integer $l$ with $0\leq l \leq n$, the function $f$ has a Fourier-Jacobi expansion of cogenus $l$ of the form 
\begin{align}
\label{eq:intro1}
f(\tau) = \sum_{T_2} \phi_{T_2}(\tau_1,\tau_{12}) e(\tr(T_2 \tau_2)),
\end{align}
where we have written the variable $\tau \in \H_n$ in block form as 
\[
\tau=\zxz{\tau_1}{\tau_{12}}{{}^t\tau_{12}}{\tau_2}
\]
with $\tau_1\in \H_{n-l}$, $\tau_2\in \H_{l}$, and $\tau_{12}\in \C^{(n-l)\times l}$. Moreover, $T_2$ runs over all positive semidefinite symmetric half-integral $l\times l$ matrices.
The series \eqref{eq:intro1} converges normally on $\H_n$. In the special case $l=n$ it reduces to the usual Fourier expansion of $f$.

The transformation law of $f$ under the stabilizer 
in $\Symp_n(\Z)$ of the standard boundary component of degree $n-l$ implies that the coefficients $\phi_{T_2}$ are Jacobi forms of weight $k$, index $T_2$, and genus $n-l$. In particular, they possess Fourier expansions of the form 
\begin{align}
\label{eq:intro2}
\phi_{T_2}(\tau_1,\tau_{12}) = 
\sum_{T_1,T_{12}} a\zxz{T_1}{T_{12}}{{}^tT_{12}}{T_2} e\left(\tr(T_1 \tau_1)+ 2\tr(T_{12}{}^t\tau_{12})\right).
\end{align}
Here $T_1$ runs through positive semidefinite symmetric half-integral $(n-l)\times (n- l)$ matrices, and $T_{12}$ runs through $(n-l)\times l$ matrices with half-integral entries. Inserting these expansions into \eqref{eq:intro1}, one recovers the usual Fourier expansion of $f$, which explains our particular notation for the Fourier coefficients in  \eqref{eq:intro2}.
The transformation law of $f$ under the Siegel parabolic subgroup (the stabilizer 
of the standard boundary component of degree $0$) implies that the Fourier coefficients have the symmetry property
\begin{align}
\label{eq:intro3}
a( {}^t u T u) = \det(u)^k a(T)
\end{align}
for all $T= \kzxz{T_1}{T_{12}}{{}^tT_{12}}{T_2}$ and all $u\in \GL_n(\Z)$. 

In recent works, several authors considered formal analogues of Fourier-Jacobi expansions. A {\em formal} Fourier-Jacobi series $f$ of weight $k$ and cogenus $l$ for the group $\Symp_n(\Z)$ is a formal series as in \eqref{eq:intro1}, where the coefficients $\phi_{T_2}$ are holomorphic Jacobi forms of weight $k$, index $T_2$, and genus $n-l$, but where no convergence of the series is required. By considering the Fourier expansions of the $\phi_{T_2}$ as in \eqref{eq:intro2} and inserting them into the series \eqref{eq:intro1}, one obtains a formal Fourier expansion of $f$. Recall that $f$ is called {\em symmetric}, if the Fourier coefficients satisfy the symmetry condition \eqref{eq:intro3} for all half-integral symmetric metrices $T$ and all $u\in \GL_n(\Z)$.

In particular, every holomorphic Siegel modular form of weight $k$ for $\Symp_n(\Z)$ defines a corresponding symmetric formal Fourier-Jacobi series of cogenus $l$. The main result of \cite{BR} states that for $n>1$ and for  every $l$ with $0<l<n$, the converse also holds: Every symmetric  formal Fourier-Jacobi series of weight $k$ and cogenus $l$ for $\Symp_n(\Z)$ arises as the Fourier-Jacobi expansion of a classical holmorphic Siegel modular form.

In the special case when $n=2$ this result was first proved by Aoki in \cite{Ao}. It was generalized to vector valued symmetric formal Fourier-Jacobi series for $\Symp_2(\Z)$ in \cite{Br1} and \cite{Rau}. The case of the paramodular group of genus $2$ and level $\leq 4$ was considered in \cite{IPY}. 
Part of the argument of \cite{BR} is revisited in \cite{Kr} using the arithmetic theory of Siegel modular forms of Faltings-Chai. Recent work of Xia  deals with the somewhat analogous case of the unitary group $\Uni(n,n)$ over a norm-euclidian imaginary quadratic field \cite{Xia}. 
Note that all these works deal with (vector valued) formal Fourier-Jacobi series for the full Siegel modular group of level $1$. The proofs mainly rely on analytic techniques, such as bounds on the dimension of the space of symmetric formal Fourier-Jacobi series 
as the weight  goes to infinity.

The purpose of the present paper is two-fold. 
First, we present a new approach to the problem. It is based on interpreting symmetric formal Fourier-Jacobi series as global sections of the line bundle of modular forms on the formal complex space given by the  completion of the Satake compactification at its boundary. Second, we generalize the notion of  symmetric formal Fourier-Jacobi series  to arithmetic subgroups of the Siegel modular group  and  prove modularity results in this context. We now describe our results in more detail.

Let $\Gamma\subset \Symp_n(\Q)$ be an arithmetic subgroup, that is, a subgroup which is commensurable with $\Gamma_n=\Symp_n(\Z)$. Recall that the rational closure $\H_n^*$ of $\H_n$ is the disjoint union of $\H_n$ with all its proper rational boundary components. If $\H_n^*$ is equipped with the cylindrical topology, then  
$\Gamma$ acts properly discontinuously on it, extending the action on $\H_n$ by fractional linear transformations. 
The Satake compactification of the Siegel modular variety $X_\Gamma=\Gamma\bs \H_n$ is given by the quotient $X_\Gamma^*=\Gamma_n\bs \H_n^*$, equipped with the Satake complex structure. It is a normal complex space, which has a natural structure as a projective algebraic variety over $\C$. The Satake boundary  is a closed subspace of codimension $n$.

For $0\leq l\leq n$, let $I_l$ be the set of all rational boundary components of degree $l$.
We define a {\em formal Siegel modular form} of weight $k$ and cogenus $l$
for the group  $\Gamma$ to be a family $(f_F)_{F\in I_{n-l}}$, where $f_F$ is a  formal Fourier-Jacobi series of weight $k$ for the boundary component $F$ and the group $\Gamma$ satisfying the following conditions:
\begin{itemize}
\item[(i)] for all $F\in I_{n-l}$ and all $\gamma \in \Gamma$ we have 
$f_F\mid_k \gamma= f_{\gamma^{-1} F}$;
\item[(ii)] for all pairs $F,F'\in I_{n-l}$ and all degree $0$ boundary components $E\in I_0$ which are adjacent to  $F$ and $F'$, the series $f_F$ and $f_{F'}$ are compatible at $E$.
\end{itemize}
Here, the action of $\Gamma$ on $f_F$ in (i) is defined by the action on the coefficients of the formal Fourier-Jacobi series. The compatibility condition in (ii) means that $f_F$ and $f_{F'}$ have the same formal Fourier expansion at $E$, see Section \ref{sect:3.1} and Definition~\ref{def:symffj} for details.
We write $
\operatorname{FM}^{(n,l)}_k(\Gamma)$ for the complex vector space of formal Siegel modular forms of weight $k$ and cogenus $l$ for $\Gamma$.

In the special case when $\Gamma=\Gamma_n$ is the full integral symplectic group and $F_{n-l}$ is the standard rational boundary component of degree $n-l$, a formal Fourier-Jacobi series of weight $k$ for the boundary component $F_{n-l}$ and the group $\Gamma_n$ is just a formal Fourier-Jacobi series of cogenus $l$ as considered before. Using the fact that $\Gamma_n$ acts transitively on $I_{n-l}$, it is easily seen that the compatibility condition (ii) implies the symmetry condition \eqref{eq:intro3}. Hence formal Siegel modular forms of weight $k$ and cogenus $l$ for $\Gamma_n$ can be identified with symmetric formal Fourier-Jacobi series of the same type.

In Section \ref{sect:3.3} we give an algebraic geometric description of formal Siegel modular forms of cogenus $1$.
Let $\omega$ be the sheaf of modular forms of weight $1$ on $X_\Gamma^*$.
Let $\hat X^*_\Gamma$ be the formal complex space given by the completion of $X^*_\Gamma$ at the Satake boundary $Y=X^*_\Gamma\setminus X_\Gamma$, and write 
\begin{align*}
i:\hat X_\Gamma^*\to X_\Gamma^*
\end{align*}
for the natural morphism of formal complex spaces. 
We denote by $\hat\omega^{\otimes k}$ the completion of the sheaf $\omega^{\otimes k}$ of modular forms of weight $k$ with respect to $Y$. Since $\omega^{\otimes k}$ is coherent, the natural map $i^*( \omega^{\otimes k})\to \hat\omega^{\otimes k}$ is an isomorphism. We provide an explicit description of the sections of $\hat\omega^{\otimes k}$ over a small open neighborhood of any rational boundary component $F$ in Proposition \ref{prop:compl}. To this end we use the Grothendieck comparison theorem 
in the category of formal complex analytic spaces \cite[Theorem 2]{Ba} to reduce the computation to a smooth toroidal compactification. In particular, we find that for any rational boundary component $F$ of degree $n-1$, the sections of $\hat\omega^{\otimes k}$ over a small open neighborhood of $F$ can be identified with formal Fourier-Jacobi series of weight $k$ for $F$. Moreover, these series satisfy the compatibility condition (ii) above. Hence, we obtain an injective linear map
\begin{align}
\label{eq:intro4}
\hat\omega^{\otimes k}(\hat X_\Gamma^*)\to \operatorname{FM}^{(n,1)}_k(\Gamma)
\end{align}
taking a global section to its formal Fourier-Jacobi expansions of cogenus $1$.
Our first result is a follows (see Theorem \ref{prop:ffj}).

\begin{theorem}
\label{thm:intro1}
The map 
in  \eqref{eq:intro4} is an isomorphism.
\end{theorem}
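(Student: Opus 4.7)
Injectivity of \eqref{eq:intro4} is built into its construction: by Proposition~\ref{prop:compl} the restriction of a section of $\hat\omega^{\otimes k}$ to each formal neighborhood $\hat V_F$ of a degree-$(n-1)$ component $F$ is faithfully encoded by the corresponding cogenus-$1$ formal Fourier-Jacobi series, and the $\hat V_F$ cover a formal dense open of $\hat X^*_\Gamma$, so no nonzero section of the torsion-free sheaf $\hat\omega^{\otimes k}$ can vanish on all of them. The content of the theorem is therefore the surjectivity.

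My plan for surjectivity is to produce, starting from a given $(f_F)_{F \in I_{n-1}}$, a local section $\sigma_{F_0}$ of $\hat\omega^{\otimes k}$ on the formal neighborhood $\hat V_{F_0}$ of each rational boundary component $F_0$ of arbitrary degree $l_0 \in \{0,1,\ldots,n-1\}$, and then to glue these via the sheaf axiom of the coherent formal sheaf $\hat\omega^{\otimes k}$. By Proposition~\ref{prop:compl}, sections of $\hat\omega^{\otimes k}$ over $\hat V_{F_0}$ are identified with formal Fourier-Jacobi series of cogenus $n - l_0$ at $F_0$. For $l_0 = n-1$ I simply take $\sigma_{F_0} := f_{F_0}$. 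For $l_0 < n-1$, I pick any $F \in I_{n-1}$ with $F_0 \subset \overline{F}$ and define $\sigma_{F_0}$ to be the cogenus-$(n-l_0)$ series obtained from $f_F$ by further expanding each Jacobi coefficient into a Fourier series along the additional variables associated with $F_0$.

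Two things must be verified: independence of the choice of $F$ in the construction of $\sigma_{F_0}$, and agreement of $\sigma_{F_0}$ with $\sigma_{F_0'}$ on overlaps $\hat V_{F_0} \cap \hat V_{F_0'}$. Both would follow from the following key lemma: a cogenus-$m$ formal Fourier-Jacobi series at $F_0$ is determined by its formal Fourier expansion (its cogenus-$n$ refinement) at every degree-$0$ rational boundary component $E \subset \overline{F_0}$. Granting this, independence of $F$ reduces to the case $F_0 = E$ of degree $0$, which is exactly condition~(ii) for the pair of adjacent $F, F' \in I_{n-1}$ whose closures both contain $E$; cocycle compatibility on overlaps reduces by the same principle, combined with condition~(i) to handle the $\Gamma$-action on the indexing.

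The main obstacle is establishing the $E$-determination lemma. The natural approach is to use the explicit toroidal description underlying Proposition~\ref{prop:compl}, together with the observation that the monoid of Fourier-Jacobi indices at $F_0$ embeds into the product, over degree-$0$ components $E \subset \overline{F_0}$, of the corresponding Fourier monoids; this should let one recover any cogenus-$(n-l_0)$ series from its cogenus-$n$ refinements at the cusps below $F_0$. Once the key lemma is in place, the family $(\sigma_{F_0})$ is compatible on all overlaps, and the sheaf property of $\hat\omega^{\otimes k}$ on $\hat X^*_\Gamma$ produces the desired global section mapping to $(f_F)_{F \in I_{n-1}}$.
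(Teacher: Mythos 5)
Your strategy coincides with the paper's: via Proposition~\ref{prop:compl}, identify local sections of $\hat\omega^{\otimes k}$ near each rational boundary component with formal Fourier-Jacobi series of the corresponding cogenus, produce candidate local sections from the data $(f_F)_{F\in I_{n-1}}$ by re-expansion, and glue. The one place where you differ substantively is also where you leave a gap: you never verify that the candidate $\sigma_{F_0}$ for a component $F_0$ of intermediate degree actually satisfies conditions~(ii) and~(iii) of Proposition~\ref{prop:compl}. Re-expanding a single $f_F$ for one chosen $F\in I_{n-1}$ above $F_0$ yields a formal series that a priori is equivariant only under $\Gamma_F\cap\Gamma_{F_0}$, which may be a proper subgroup of the full stabilizer $\Gamma_{F_0}$. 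The paper bridges this through Lemma~\ref{ass:gen}, a group-theoretic statement special to $\Gamma=\Gamma_n(N)$ (that the intermediate parabolic $\Gamma_{F_m}$ is generated by $\Gamma_{F_{n-1}}\cap\Gamma_{F_m}$ and $\Gamma_{F_0}\cap\Gamma_{F_m}$), and then reduces the general case to $\Gamma_n(N)$ by a final descent step.

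Your ingredients in fact suffice to deduce condition~(ii) more directly and for arbitrary $\Gamma$: for $\gamma\in\Gamma_{F_0}$ one has $f_F\mid_k\gamma = f_{\gamma^{-1}F}$ by condition~(i) of Definition~\ref{def:symffj}, hence $\sigma_{F_0}\mid_k\gamma$ equals the re-expansion of $f_{\gamma^{-1}F}$ at $F_0$, and this equals $\sigma_{F_0}$ by your independence-of-$F$ claim; likewise, each rank-one sub-series required by condition~(iii) can be identified, via independence of $F$, with a convergent Fourier-Jacobi coefficient of $f_{F'}$ for the degree-$(n-1)$ component $F'$ determined by the chosen rank-one direction. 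This would bypass Lemma~\ref{ass:gen} and the reduction to principal congruence subgroups, which is an attractive simplification. But you must spell this out, and the bookkeeping is not trivial: the degree-$0$ cusp $E$ you fix in the "$E$-determination" step is typically moved by $\gamma$, so you have to track the induced identifications of the index lattices $L_E^\vee$ and the restriction maps $L_E^\vee\twoheadrightarrow L_{F_0}^\vee$. As written, the passage from "independence of $F$" to "$\sigma_{F_0}$ is a local section of $\hat\omega^{\otimes k}$" is asserted rather than proved, and that passage is precisely where the substance of the surjectivity lies.
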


Recall that the {\em affine covering number} $\acn(S)$ of  a scheme $S$ is defined as one less than the smallest number of open affine sets required to cover $S$, see e.g.~\cite{At}, \cite{RV}. It gives an upper bound for the cohomological dimension of $S$.
If $S$ is a quasi-projective scheme, then there is the 
trivial bound  $\acn(S)\leq \dim(S)$. Using this notion, we may formulate our main result (see Theorem \ref{thm:maingen}).

\begin{theorem}
\label{thm:intromaingen}
Assume that $\acn(X_\Gamma)\leq \frac{n(n+1)}{2} -2$. Then the natural map 
\begin{align}
\label{eq:intro5}
H^0(X^*_\Gamma, \omega^{\otimes k})\to  \operatorname{FM}^{(n,1)}_k(\Gamma)
\end{align}
taking a holomorphic modular form to its cogenus $1$ formal Fourier-Jacobi expansions is an isomorphism. 
\end{theorem}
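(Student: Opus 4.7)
By Theorem~\ref{thm:intro1} the target $\operatorname{FM}^{(n,1)}_k(\Gamma)$ is canonically identified with $H^0(\hat X_\Gamma^*, \hat\omega^{\otimes k})$, and the map in \eqref{eq:intro5} becomes the natural restriction
\[
\rho\colon H^0(X_\Gamma^*, \omega^{\otimes k}) \longrightarrow H^0(\hat X_\Gamma^*, \hat\omega^{\otimes k})
\]
induced by $i^{\ast}\omega^{\otimes k} \cong \hat\omega^{\otimes k}$; the task reduces to showing that $\rho$ is bijective. For injectivity, any $s\in\ker\rho$ has vanishing formal Taylor expansion along the Satake boundary $Y := X_\Gamma^*\setminus X_\Gamma$, so coherence of $\omega^{\otimes k}$ and Krull's intersection theorem applied stalkwise give $s_y=0$ for every $y\in Y$; hence $s$ vanishes on an open analytic neighborhood of $Y$, and connectedness of $X_\Gamma^*$ combined with reflexivity of $\omega^{\otimes k}$ on the normal variety $X_\Gamma^*$ then forces $s=0$ globally.

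Surjectivity is the harder direction, where the hypothesis enters. Set $d = n(n+1)/2 = \dim X_\Gamma^*$. Serre's theorem on affine covers (see \cite{At}, \cite{RV}) turns $\acn(X_\Gamma) \leq d-2$ into the vanishing
\[
H^i(X_\Gamma, \mathcal{F}) = 0 \qquad (i \geq d-1)
\]
for every coherent sheaf $\mathcal{F}$ on $X_\Gamma$. I would combine this with the local-cohomology long exact sequence
\[
\cdots \to H^{i-1}(X_\Gamma, \omega^{\otimes k}) \to H^i_Y(X_\Gamma^*, \omega^{\otimes k}) \to H^i(X_\Gamma^*, \omega^{\otimes k}) \to H^i(X_\Gamma, \omega^{\otimes k}) \to \cdots
\]
and the inverse-limit description
\[
H^0(\hat X_\Gamma^*, \hat\omega^{\otimes k}) \;=\; \varprojlim_m H^0\bigl(X_\Gamma^*, \omega^{\otimes k}/\mathcal{I}_Y^m \omega^{\otimes k}\bigr),
\]
where $\mathcal{I}_Y$ is the ideal sheaf of $Y$. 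For each $m$, the short exact sequence $0 \to \mathcal{I}_Y^m \omega^{\otimes k} \to \omega^{\otimes k} \to \omega^{\otimes k}/\mathcal{I}_Y^m \omega^{\otimes k} \to 0$ gives a connecting map $\delta_m\colon H^0(X_\Gamma^*, \omega^{\otimes k}/\mathcal{I}_Y^m \omega^{\otimes k}) \to H^1(X_\Gamma^*, \mathcal{I}_Y^m \omega^{\otimes k})$, and a six-term inverse-limit exact sequence reduces surjectivity of $\rho$ to the vanishing $\varprojlim_m \operatorname{im}(\delta_m) = 0$. Properness of $X_\Gamma^*$ makes each $H^0(X_\Gamma^*, \mathcal{I}_Y^m \omega^{\otimes k})$ finite-dimensional, so Mittag-Leffler annihilates the relevant $\varprojlim^{1}$ term; the cohomological-dimension vanishing on $X_\Gamma$ then propagates through local cohomology on $X_\Gamma^*$ to force $\varprojlim_m \operatorname{im}(\delta_m) = 0$.

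The main obstacle is this last propagation. The images $\operatorname{im}(\delta_m)$ live in the groups $H^1(X_\Gamma^*, \mathcal{I}_Y^m \omega^{\otimes k})$, and one must show they vanish \emph{compatibly} in the inverse limit rather than merely pointwise. The numerical threshold $d-2$ is precisely what couples the top-degree vanishing on $X_\Gamma$ to comparison in degree zero on $X_\Gamma^*$; turning the cohomological-dimension hypothesis into uniform control across the family $\{\mathcal{I}_Y^m \omega^{\otimes k}\}_{m\geq 1}$, likely via a Mayer-Vietoris or \v{C}ech-cohomology description interlacing an affine cover of $X_\Gamma$ with shrinking formal neighborhoods of $Y$, is the technical heart of the argument.
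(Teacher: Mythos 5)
Your reduction of the statement to bijectivity of the restriction map $\rho\colon H^0(X_\Gamma^*,\omega^{\otimes k})\to H^0(\hat X_\Gamma^*,\hat\omega^{\otimes k})$ via Theorem~\ref{thm:intro1} is exactly the structure of the paper's proof, and your injectivity argument (Krull intersection stalkwise plus analytic continuation on a normal connected space) is sound, if slightly more elaborate than needed. The problem is surjectivity, which you yourself flag as unresolved: you set up the local-cohomology long exact sequence, the $\varprojlim$ description of $H^0(\hat X_\Gamma^*,\hat\omega^{\otimes k})$, and a Mittag--Leffler argument to kill the $\varprojlim^1$, but then the crucial implication --- that $\acn(X_\Gamma)\le \frac{n(n+1)}{2}-2$ forces $\varprojlim_m \operatorname{im}(\delta_m)=0$ --- is left as ``the technical heart'' with only a suggestion of how it might go. As it stands this is a genuine gap, not a finished argument; you are in effect proposing to re-derive a special case of Raynaud's Lefschetz theorem for coherent cohomology rather than invoking it.

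The paper closes precisely this gap by citing \cite[Corollaire~2.8]{Ray}: under the depth hypothesis guaranteed by local freeness of $\omega^{\otimes k}$ on the smooth open piece $X_\Gamma$ and the inequality $\acn(X_\Gamma)\le\dim(X_\Gamma)-2$, the formal-function comparison $H^0(X^*_\Gamma,\omega^{\otimes k})\cong H^0(\hat X^*_\Gamma,\hat\omega^{\otimes k})$ holds (Theorem~\ref{thm:alg}), and GAGA transports this to the analytic and formal-analytic setting (Corollary~\ref{cor:an}). Two further points you omit that the paper handles explicitly: (1) Raynaud's theorem requires $\omega^{\otimes k}$ to be locally free on the smooth open $X_\Gamma$, which fails for general $\Gamma$; the paper first passes to a neat principal congruence subgroup $\Gamma'=\Gamma_n(N)$ with $N\ge 3$ (where $\Gamma'$ acts freely), uses that finite morphisms are affine to get $\acn(X_{\Gamma'})\le\acn(X_\Gamma)$, and then descends by $\Gamma/\Gamma'$-invariance. (2) The transfer from algebraic to analytic completion needs GAGA at each finite-order thickening $\omega^{\otimes k}/\mathcal I_Y^m\omega^{\otimes k}$, which your \v{C}ech/Mayer--Vietoris sketch would have to address as well. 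So the overall strategy you identify is the right one, but the surjectivity step must either be completed in detail or replaced by the appeal to Raynaud.
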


To prove this result, we use  an algebraization theorem of Raynaud \cite[Corollaire 2.8]{Ray} to show that the  natural map  $H^0(X^*_\Gamma, \omega^{\otimes k})\to H^0(\hat X^*_\Gamma, \hat \omega^{\otimes k})$ is an isomorphism. Then the assertion can be deduced by means of Theorem \ref{thm:intro1},  see Section \ref{sect:4.2}.

This raises the problem of computing (or bounding) the affine covering number of the Siegel modular variety $X_\Gamma$. 
By the theory of Baily-Borel, $\acn(X_\Gamma)$ is the smallest non-negative integer $j$, for which there exist cusp forms $F_0,\dots, F_j$ for $\Gamma$ 
that have no common zero on $\H_n$.

According to \cite[Theorem 4]{At}, we have $\acn(X_\Gamma)\geq n(n-1)/2$. 
It is 
believed that this lower bound is actually an equality.
However, for general $n$ not much is known in this direction. Here we employ results of Igusa, Salvati Manni, and  Fontanari--Pascolutti on the reducible locus of $X_{\Gamma_n}$ to prove the required upper bounds for small $n$, see Proposition~\ref{prop:acn3} and Proposition~\ref{prop:acn4}.
The following corollaries can be derived.

\begin{corollary}
\label{cor:intromaingen}
Assume that $2\leq n \leq 4$. Then the natural map \eqref{eq:intro5}
is an isomorphism. 
\end{corollary}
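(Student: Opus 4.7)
The plan is to derive this corollary from Theorem \ref{thm:intromaingen} by verifying its hypothesis $\acn(X_\Gamma) \leq \tfrac{n(n+1)}{2} - 2$. I would proceed in two stages: first treat the special case $\Gamma \subseteq \Gamma_n(2)$ using the Freitag--Salvati Manni bound recalled just before the corollary, and then reduce the case of a general arithmetic subgroup to this special case by a finite group invariance argument.

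For the first stage, assume $\Gamma \subseteq \Gamma_n(2)$. The cited consequence of Theorem \ref{thm:FSM} then yields
\[
\acn(X_\Gamma) \;\leq\; \min\!\bigl(2^{n-1}-1,\; \tfrac{n(n+1)}{2}\bigr).
\]
For $n = 2, 3, 4$ the relevant pairs $(2^{n-1}-1,\; \tfrac{n(n+1)}{2}-2)$ are $(1,1)$, $(3,4)$ and $(7,8)$, so $2^{n-1}-1 \leq \tfrac{n(n+1)}{2}-2$ in each case, and Theorem \ref{thm:intromaingen} produces the desired isomorphism. Observe that this numerical inequality fails starting at $n = 5$, where $15 > 13$; this is exactly the reason for the range restriction in the corollary.

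For an arbitrary arithmetic $\Gamma \subset \Symp_n(\Q)$, I would choose a subgroup $N$ which is normal of finite index in $\Gamma$ and contained in $\Gamma_n(2)$. One may take $N$ to be the core of $\Gamma \cap \Gamma_n(2)$ in $\Gamma$, i.e.\ the kernel of the action of $\Gamma$ on the finite coset space $\Gamma / (\Gamma \cap \Gamma_n(2))$; this $N$ is arithmetic, normal of finite index in $\Gamma$, and contained in $\Gamma_n(2)$. By the first stage applied to $N$, the natural map
\[
H^0(X^*_N, \omega^{\otimes k}) \lo \operatorname{FM}^{(n,1)}_k(N)
\]
is an isomorphism. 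The finite group $G = \Gamma / N$ acts on both sides --- on the left by the slash action descending from $\Gamma$, on the right by simultaneously permuting the index set $I_{n-1}$ and applying the slash action as prescribed by condition~(i) --- and the natural map is $G$-equivariant by construction. Passing to $G$-invariants therefore gives an isomorphism whose source equals $H^0(X^*_\Gamma, \omega^{\otimes k})$, since $X^*_\Gamma$ is the quotient of the normal complex space $X^*_N$ by the finite group $G$, and whose target equals $\operatorname{FM}^{(n,1)}_k(\Gamma)$, since promoting condition~(i) from $N$-equivariance to $\Gamma$-equivariance is precisely the $G$-invariance requirement while condition~(ii) is intrinsic to the family.

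The genuine content of the argument is the elementary numerical comparison $2^{n-1}-1 \leq \tfrac{n(n+1)}{2}-2$, which is valid only in the range $n \leq 4$ and encapsulates the main obstacle to extending the corollary further. The descent from $N$ to $\Gamma$ is formal and relies only on standard behaviour of modular forms and formal Siegel modular forms under finite quotients of arithmetic Siegel modular varieties.
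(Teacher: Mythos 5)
Your proof is correct, but the descent step from a general arithmetic $\Gamma$ to a subgroup of $\Gamma_n(2)$ is organized differently from the paper. You arrange for a normal subgroup $N\trianglelefteq\Gamma$ contained in $\Gamma_n(2)$ (via the core of $\Gamma\cap\Gamma_n(2)$ in $\Gamma$) and then take $G=\Gamma/N$-invariants of the isomorphism $H^0(X^*_N,\omega^{\otimes k})\to \operatorname{FM}^{(n,1)}_k(N)$. The paper instead works with $\Gamma'=\Gamma\cap\Gamma_n(2)$ \emph{without} requiring normality: it pulls $f\in\operatorname{FM}^{(n,1)}_k(\Gamma)$ back to $\operatorname{FM}^{(n,1)}_k(\Gamma')$, applies the $\Gamma'\subseteq\Gamma_n(2)$ case to produce a holomorphic $g\in H^0(X^*_{\Gamma'},\omega^{\otimes k})$, and then exploits that the Fourier--Jacobi expansions of $g$ at the various $F\in I_{n-1}$ now \emph{converge} and equal $f_F$. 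Condition~(i) of Definition~\ref{def:symffj} then gives $g\mid_k\gamma=f_F\mid_k\gamma=f_{\gamma^{-1}F}=g$ for all $\gamma\in\Gamma$ directly, so $g$ descends to $H^0(X^*_\Gamma,\omega^{\otimes k})$. The paper's route buys you a shorter argument that avoids both the core construction and the need to verify that $\operatorname{FM}^{(n,1)}_k(N)^G=\operatorname{FM}^{(n,1)}_k(\Gamma)$; the latter is not difficult but does require a small check that you gloss over, namely that for a $G$-invariant family the coefficients of each $f_F$ are automatically supported on the coarser character lattice $L_{F,\Gamma}^\vee\subset L_{F,N}^\vee$ (this follows by combining conditions \eqref{eq:tr1} and \eqref{eq:tr2} of Definition~\ref{def:ffjF} for $\gamma\in\Gamma_F'$). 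Your route, on the other hand, is more systematic in spirit --- it phrases the descent as a genuine $G$-invariants computation on both sides of a $G$-equivariant map --- and would generalize painlessly to situations where one does not have access to the convergence bootstrap, e.g.\ if one wished to descend on the formal completion side directly. Your numerical verification of $2^{n-1}-1\leq\tfrac{n(n+1)}{2}-2$ for $n=2,3,4$ and its failure at $n=5$ matches the paper.
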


\begin{corollary}
Assume that $2\leq n \leq 4$. Let $U\subset X_\Gamma^*$ be an open analytic neighborhood of the Satake boundary $Y$.
Then the restriction map $H^0(X^*_\Gamma,\omega^{\otimes k})\to H^0(U,\omega^{\otimes k})$ is an isomorphism.
\end{corollary}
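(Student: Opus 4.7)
The plan is to factor the restriction map through the formal completion $\hat X_\Gamma^*$ of $X_\Gamma^*$ along $Y$, reducing the claim to Corollary \ref{cor:intromaingen}. Indeed, the proof of Theorem \ref{thm:intromaingen}, which combines Theorem \ref{thm:intro1} with Raynaud's algebraization, shows that for $2 \leq n \leq 4$ the completion map
\begin{equation*}
\alpha \colon H^0(X_\Gamma^*, \omega^{\otimes k}) \longrightarrow H^0(\hat X_\Gamma^*, \hat \omega^{\otimes k})
\end{equation*}
is an isomorphism. Since the formal analytic space $\hat X_\Gamma^*$ is contained in every open analytic neighborhood of $Y$, one has a natural factorization
\begin{equation*}
\alpha \,=\, \beta \circ r, \qquad r \colon H^0(X_\Gamma^*, \omega^{\otimes k}) \to H^0(U, \omega^{\otimes k}), \qquad \beta \colon H^0(U, \omega^{\otimes k}) \to H^0(\hat X_\Gamma^*, \hat \omega^{\otimes k}).
\end{equation*}
Injectivity of $r$ is then immediate; surjectivity of $r$ will follow once $\beta$ is shown to be injective, since any $s\in H^0(U,\omega^{\otimes k})$ will then satisfy $s = r(\alpha^{-1}(\beta(s)))$.

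To establish injectivity of $\beta$, I would argue as follows. Suppose $t \in H^0(U, \omega^{\otimes k})$ has $\beta(t) = 0$. As $\omega^{\otimes k}$ is a coherent analytic sheaf, Krull's intersection theorem applied stalkwise gives $t_y \in \bigcap_m (\mathcal I_Y)_y^m \, \omega_y^{\otimes k} = 0$ for every $y \in Y$, because $(\mathcal I_Y)_y$ lies in the maximal ideal of the Noetherian local ring $\mathcal O_{X_\Gamma^*, y}$. Hence $t$ vanishes on some open neighborhood $V \subseteq U$ of $Y$. Now $\omega^{\otimes k}$ is reflexive of rank one on the normal irreducible variety $X_\Gamma^*$, hence torsion-free, so the identity principle applies: a section on a connected open subspace that vanishes on a non-empty open subset vanishes identically. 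Applying this to each connected component of $U$, all of which meet $Y \subset V$ since $U$ is a neighborhood of $Y$, yields $t = 0$ on $U$.

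The main obstacle is essentially bookkeeping rather than conceptual: verifying the identity $\beta \circ r = \alpha$ and confirming that the completion $\hat \omega^{\otimes k}$ can be computed equivalently on $U$, on $X_\Gamma^*$, or algebraically as in the proof of Theorem \ref{thm:intromaingen}. Both points rest on the functoriality of $\mathcal I_Y$-adic completion for coherent sheaves, so no new input is required. There is also one implicit proviso in the formulation: the corollary is really a statement about genuine neighborhoods of the entire boundary $Y$, i.e.\ open sets $U \supset Y$ each of whose connected components meets $Y$. Components disjoint from $Y$ would carry extra analytic sections on which the global modular forms have no purchase, so the restriction map would fail to be surjective in that degenerate situation.
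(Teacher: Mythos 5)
Your argument is correct and is the route the authors clearly intend (the corollary is stated in the introduction without a separate proof): factor the completion isomorphism from Theorem~\ref{thm:intromaingen}/Corollary~\ref{cor:intromaingen} as $\alpha=\beta\circ r$ through $U$, then injectivity of the completion map $\beta$ from $U$ — hence surjectivity of the restriction $r$ — follows from Krull's intersection theorem on stalks together with the identity principle for the torsion-free coherent sheaf $\omega^{\otimes k}$ on the irreducible open pieces of $X_\Gamma^*$. Your closing proviso is also a legitimate observation: the statement tacitly assumes that every connected component of $U$ meets $Y$, which for $n\geq 2$ (where $Y$ is connected) amounts to replacing $U$ by its component containing the boundary.
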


We remark that Theorem \ref{thm:intromaingen} and Corollary \ref{cor:intromaingen} have natural generalizations to vector valued modular forms transforming with a finite dimensional representation of $\Gamma$. Alternatively, one can derive such results for vector valued forms from the scalar case by means of the argument of \cite{Br1}.
Using induction on the cogenus as in \cite[Lemma 5.2]{BR} one can also deduce an analogue for formal Siegel modular forms of higher cogenus $l<n$.

As an application, we consider the case of the paramodular group $K(N)\subset \Symp_2(\Q)$ of level $N$ and genus $2$, see Section \ref{sect:4.4}. We write $K(N)^*$ for the extension of $K(N)$ by all Atkin-Lehner type involutions.
It contains $K(N)$ as a normal sugbroup, and 
\[
K(N)^*/K(N) \cong (\Z/2\Z)^{\nu(N)},
\]
where $\nu(N)$ denotes the number of positive divisors of $N$.
Let $f$ be a formal Fourier-Jacobi series of weight $k$ for the standard boundary component $F_1$ and the group $K(N)$. Denote by 
\begin{align*}
f(\tau) = \sum_{T} a(T)  \,e(\tr T \tau)
\end{align*} 
its formal Fourier expansion at the boundary component $F_0$. Here $T$ runs through all half integral positive semi-definite matrices $2\times 2$-matrices with  $N\mid T_2$.
We call $f$  {\em strongly symmetric} if there exists a character $\chi_f: K(N)^*/K(N)\to \{\pm 1\}$ such that 
\begin{align}
\label{intro:strongsymm}
a(uT\, {}^t u) = \chi_f\zxz{{}^tu^{-1}}{0}{0}{u}\det(u)^k a(T) 
\end{align} 
for all $T$ and all $u\in \Gamma_0(N)^*$, the extension of $\Gamma_0(N)$  by the Atkin-Lehner involutions (viewed as elements of $\SL_2(\R)$).

\begin{theorem}
\label{thm:introKN}
Let $N$ be a square-free positive integer.
Let $f$ be a strongly symmetric formal Fourier-Jacobi series of weight $k$ for the boundary component $F_1$ and the group $K(N)$. Then $f$ converges and defines a 
paramodular form in $M_k(K(N))$.
\end{theorem}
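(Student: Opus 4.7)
The plan is to reduce the theorem to Corollary \ref{cor:intromaingen} by first extending $f$ to a formal Siegel modular form for $K(N)$, then restricting to a normal subgroup contained in the principal congruence subgroup of level $2$ to apply the corollary, and finally descending back to $K(N)$.

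First I would use the strong symmetry to produce a family $(f_F)_{F\in I_1}$ defining an element of $\operatorname{FM}_k^{(2,1)}(K(N))$ with $f_{F_1}=f$. Since $\Symp_2(\Q)$ acts transitively on $I_1$, within a single $K(N)$-orbit the formula $f_F := f\mid_k \gamma^{-1}$ (for any $\gamma \in K(N)$ with $\gamma F_1 = F$) is unambiguous thanks to the Jacobi transformation of $f$ at $F_1$ under $\operatorname{Stab}_{K(N)}(F_1)$, and condition (i) for $\gamma\in K(N)$ is built into the definition. Between different $K(N)$-orbits on $I_1$ one uses the Atkin-Lehner involutions in $K(N)^*$ together with the character $\chi_f$ furnished by the strong symmetry to transport $f$ between orbits. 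Condition (ii), which requires that $f_F$ and $f_{F'}$ have identical formal Fourier expansions at any common adjacent degree~$0$ component $E$, reduces by $K(N)$-translation to the case $E=F_0$, and there \eqref{intro:strongsymm} provides precisely the compatibility of Fourier coefficients across the various orbits meeting at $F_0$.

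Next I would choose a normal subgroup $\Gamma \subset K(N)$ of finite index with $\Gamma \subset \Gamma_2(2)$, for example the intersection of all $K(N)$-conjugates of $K(N) \cap \Gamma_2(2)$ taken inside $\Symp_2(\Q)$. Restricting $(f_F)_F$ yields an element of $\operatorname{FM}_k^{(2,1)}(\Gamma)$. Since $\Gamma \subset \Gamma_2(2)$, the Freitag-Salvati Manni bound invoked before Corollary \ref{cor:intromaingen} gives $\acn(X_\Gamma) \leq \min(2^{n-1}-1,\, n(n+1)/2) = 1$ for $n=2$, so Corollary \ref{cor:intromaingen} applies and produces a classical modular form $\tilde f \in M_k(\Gamma)$ whose cogenus $1$ Fourier-Jacobi expansion at each $F' \in I_1$ equals $f_{F'}$. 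In particular, $f = f_{F_1}$ converges. To conclude $\tilde f \in M_k(K(N))$, I would check $K(N)$-invariance: for $\gamma \in K(N)$, the slash $\tilde f \mid_k \gamma$ again lies in $M_k(\Gamma)$ by normality, and its formal Fourier-Jacobi expansion at $F'\in I_1$ is $f_{\gamma F'} \mid_k \gamma = f_{F'}$ by condition (i) of the $K(N)$-equivariant family; the injectivity of \eqref{eq:intro5} applied to $\Gamma$ then forces $\tilde f \mid_k \gamma = \tilde f$.

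The hard part is the first step: translating the strong symmetry, a condition on Fourier coefficients at the single boundary component $F_0$ involving only matrices of the form $\kzxz{{}^tu^{-1}}{0}{0}{u}$ with $u\in \Gamma_0(N)^*$ inside the Siegel parabolic, into a well-defined $K(N)$-equivariant family on all of $I_1$ that simultaneously satisfies condition (ii) at every degree $0$ component. This requires a careful bookkeeping of the $K(N)$ and $K(N)^*$ orbit structures on $I_1$, of the stabilizers of adjacent pairs $(F,E)$, and of how $\chi_f$ intertwines the different orbits via the Atkin-Lehner involutions.
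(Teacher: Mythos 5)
Your proposal is essentially the paper's proof: construct the family $(f_F)_{F\in I_1}$ by setting $f_F := \chi_f(\delta)\, f\mid_k\delta$ for $\delta\in K(N)^*$ with $F=\delta^{-1}F_1$, verify it lands in $\operatorname{FM}^{(2,1)}_k(K(N))$ using the orbit structure of $K(N)$ and $K(N)^*$ on $I_1$ for squarefree $N$, and then invoke the modularity result for cogenus $1$. The one place where you do extra work is the intermediate passage to a normal subgroup $\Gamma\subset K(N)\cap\Gamma_2(2)$ followed by a descent argument: that descent is already built into the statement and proof of Corollary~\ref{cor:maingen} (equivalently Corollary~\ref{cor:intromaingen}), which applies to an arbitrary arithmetic subgroup when $2\leq n\leq 4$, so the paper applies it directly with $\Gamma=K(N)$; your version re-derives that reduction (and additionally insists on normality of $\Gamma$ in $K(N)$, which the paper's descent does not require since it compares convergent Fourier--Jacobi expansions of holomorphic functions rather than staying inside a fixed $M_k(\Gamma)$).
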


For a discussion of the relationship of the symmetry condition in this result and  the involution condition in the work of Ibukiyama,  Poor, and Yuen \cite{IPY} we refer to Section~\ref{sect:4.4}.

Part of the motivation to investigate the modularity of formal Fourier-Jacobi series comes from the Kudla program. Here certain generating series of classes of special cycles in Chow groups of orthogonal and unitary Shimura varieties play a central role, see e.g.~\cite{Ku:Duke}, \cite{Ku:Annals}, \cite{Ku:MSRI}. The generating series of special cycles of codimension $n$ is conjectured to be a Siegel (respectively Hermitian) modular form  of genus $n$. By an argument of Wei Zhang \cite{Zh} one can often show that the generating series is given by a symmetric formal Fourier-Jacobi series of genus $n$ and cogenus $n-1$. Hence the conjectured modularity can be deduced from a suitable modularity result for symmetric formal Fourier-Jacobi series. In this way, Kudla's modularity conjecture was established for orthogonal Shimura varieties associated with quadratic spaces of signature $(m,2)$ over $\Q$ in \cite{BR} and for unitary Shimura varieties associated with hermitian spaces of signature $(m,1)$ over norm-euclidian imaginary quadratic fields in \cite{Xia}, based on Liu's extension of Zhang's
work~\cite{Liu}. An analogous result for special cycles on integral models of orthogonal Shimura varieties is proved in \cite{HM}. Symmetric formal Fourier-Jacobi series can also be used for the computation of Siegel modular forms, see e.g.~\cite{IPY}. 

We briefly describe the contents of this paper. In Section~\ref{sect:2} we recall some facts on Siegel modular varieties, the Satake compactification, and on Siegel modular forms. This mainly serves to fix  notation and to collect some important facts that will be used later.
In Section~\ref{sect:3} we introduce formal Siegel modular forms and provide the algebraic geometric description as formal sections of the line bundle of Siegel modular forms. In Section~\ref{sect:4} we consider affine covering numbers of Siegel modular varieties, Raynaud's algebraization theorem, and its application to formal Siegel modular forms. Finally we discuss the case of the paramodular group in genus $2$.

\subsubsection*{Acknowledgments}
We thank Jean-Beno\^it Bost, Eberhard Freitag, Samuel Grushevsky, Klaus Hulek, J\"urg Kramer, Riccardo Salvati Manni, and Torsten Wedhorn for helpful conversations on the content of this paper.


\section{Siegel Modular varieties}
\label{sect:2}

Here we recall some facts on Siegel modular varieties, the Satake compactification, and Siegel modular forms. This mainly serves to fix notation and to provide some background for the following sections.

Let $n$ be a positive integer, and denote by 
$W=\Q^{2n}$ the standard symplectic space of dimension $2n$, equipped with the symplectic form given by  
$\langle x,y\rangle = x J\, {}^t y$, where 
\[J=\zxz{0}{1_n}{-1_n}{0}
\]
($x,y\in W$ are viewed as row vectors). 
Write $\Sym_n(\C)$ for the space of symmetric complex $n\times n$-matrices.
The real symplectic group $G:=\Symp_n(\R)$ acts on the Siegel upper half space
$\H_n= \{\tau\in \Sym_n(\C)\mid \; \Im(\tau)>0\}$ 
by fractional linear transformations
\[
\tau \mapsto \abcd\tau = (a\tau +b)(c\tau+d)^{-1}.
\]
The Cayley transformation $\tau\mapsto z=(\tau-i1_n)(\tau+i1_n)^{-1}$ maps $\H_n$ biholomorphically to the bounded symmetric domain 
\[
\calD_n = \{z \in \Sym_n(\C)\mid \;  1_n-z\bar z >0\}.
\]
The action of $G$ on $\H_n$ induces a compatible action on $\calD_n$.

\subsection{Boundary components}

The action of $G$ extends to the topological closure $\bar \calD_n$ of $\calD_n$ in $\Sym_n(\C)$. 
Two points in $\bar \calD_n$ are called equivalent if they can be connected by a finite chain of holomorphic curves $\xi_i: \{z\in \C\mid \; |z|<1\} \to \bar \calD_n$.
It is easily seen that all points in $\calD_n$ are equivalent. The equivalence classes in 
$\bar\calD_n\setminus \calD_n$ are called the proper {\em boundary components} of $\calD_n$.

To any $z\in \bar\calD_n$ we can associate a linear map 
\[
\psi_z: \R^{2n}\to \C^n,\quad \nu \mapsto \nu \begin{pmatrix} i(1_n+z) \\1_n-z\end{pmatrix},
\]
where the elements of $\R^{2n}$ and $\C^n$ are viewed as row vectors. The subspace $U(z)= \ker\psi_z \subset \R^{2n}=W_\R$ is totally isotropic with respect to the symplectic form $J$. It is non-trivial if and only if $z\in \bar\calD_n\setminus \calD_n$ is a proper boundary point. Moreover, $U(z_1)=U(z_2)$ if and only if $z_1$ and $z_2$ are equivalent.
Hence, we obtain a bijection $F\mapsto U(F)$ between the set of proper boundary components $F$ of $\bar \calD_n$ and the set of non-trivial isotropic subspaces $U\subset \R^{2n}$, see \cite[Chapter I.3A]{HKW} and \cite[Section 4]{Na}.
The group $G$ acts on isotropic subspaces $U\subset \R^{2n}$ by right translation $U\mapsto Ug^{-1}$ for $g\in G$. This action is compatible with the action on $\bar\calD_n$, as we have $U(gz)= U(z)g^{-1}$.

Recall that a boundary component $F$ is called {\em adjacent} to another boundary component $F'$,  if $\bar F'\supset F$ and $F'\neq F$. In this case we write $F'>F$. This is equivalent to the condition that the isotropic subspace $U(F)$ correponding to $F$ strictly contains the subspace $U(F')$.

For $0\leq m\leq n$, the subset
\[
F_{m} =\left\{ \zxz{z'}{0}{0}{1_{n-m}}\mid \; z'\in \calD_{m}\right\} \subset \bar\calD_n
\]
is a boundary component of $\calD_n$, called the {\em standard boundary component} of degree $m$. The corresponding isotropic subspace $U(F_{m}) $ has dimension $n-m$ and is given by 
\begin{align}
\label{eq:UFm}
U(F_{m}) = \{ (x_1,\dots,x_{2n})\in \R^{2n}\mid \; x_1=\dots= x_{n+m}=0\}.
\end{align}
In particular, we have $F_n=\calD_n$, and $F_0$ is a point. The Caley transformation $\H_{m}\to \calD_{m}$ induces an isomorphism $\H_{m}\to F_{m}$.

The stabilizer 
\begin{align}
G_F=\{g\in G\mid\; \text{$g(F)=F$}\} 
\end{align}
of a boundary component $F$ is a maximal parabolic subgroup of $G$. 
We also consider the centralizer 
 \begin{align}
G_F^0= \{g\in G\mid\; \text{$g(z)=z$ for all $z\in F$}\} 
\end{align}
of $F$, which is a normal subgroup of $G_F$. We denote by $G_F'$ the center of the unipotent radical of $G_F$, and by 
\[
G^J_F=\{g\in G\mid \; \text{$ghg^{-1} = h$ for all $h\in  G_F'$}\}
\]
its centralizer  in the group $G$. 
%
Recall from \cite[\S 7]{Na} and \cite[Chapter III.4]{AMRT} that here is a homomorphism 
\begin{align}
\label{eq:pl}
p_\ell: G_F\to \Aut(G_F'),\quad g\mapsto p_\ell(g)= (h\mapsto ghg^{-1}). 
\end{align}
The image of $p_\ell$ preserves the quadratic form on $G_F'$ induced by the Killing form and the cone of positive elements. Moreover, we have $G_F^J=\ker(p_\ell)$. 

For the standard boundary component $F_{m}$ we have 
\begin{align}
\label{eq:GFm}
G_{F_{m}}&=\left\{ \begin{pmatrix}
a & 0 & b & *\\
* & u & * & *\\
c & 0 & d & *\\
0 & 0 & 0 & {}^tu^{-1}
\end{pmatrix}\right\},\\
G_{F_{m}}^0&=\left\{ \begin{pmatrix}
1 & 0 & 0 & *\\
* & u & * & *\\
0 & 0 & 1 & *\\
0 & 0 & 0 & {}^tu^{-1}
\end{pmatrix}\right\},
\end{align}
where $\kabcd\in \Symp_{m}(\R)$ and $u\in \Gl_{n-m}(\R)$. The quotient group $G_{F_{m}}/G_{F_{m}}^0$ is isomorphic to $\Symp_{m}(\R)$.
Moreover, it is easily seen that 
\begin{align*}
G'_{F_{m}}&=\left\{ \begin{pmatrix}
1 & 0 & 0 & 0\\
0 & 1 & 0 & s\\
0 & 0 & 1 & 0\\
0 & 0 & 0 & 1
\end{pmatrix}\right\},
\end{align*}
and the normalizer of $G'_{F_{m}}$ in $G$ is given by $G_{F_m}$. The group 
$G^J_F$ consists of those matrices in $G_{F_m}$ for which $u=\pm 1_{n-m}$.
If we identify $G_{F_m}'\cong \Sym_{n-m}(\R)$, then the action of $g\in G_{F_m}$ as in \eqref{eq:GFm} on $s\in \Sym_{n-m}(\R)$ is given by $p_\ell(g)(s)= s[{}^tu]=u s\,{}^tu$.
Let $\Gamma_n=\Symp_n(\Z)$ and put 
\begin{align}
\label{eq:gnm}
\Gamma_{n,m} &= \Gamma_n\cap G_{F_{m}},\\ 
\label{eq:gnm0}
\Gamma_{n,m}^0 &= \Gamma_n\cap G_{F_{m}}^0.
\end{align}

A boundary component $F$ is called {\em rational} if $G_F$ is defined over $\Q$. 
This is equivalent to the condition that $U(F)\subset W_\R$ is defined over $\Q$. Moreover, it is equivalent to the condition that $F$ is a $\Symp_n(\Q)$-translate of a standard boundary component.
We define the {\em rational closure} of $\calD_n$ by 
\begin{align}
\calD_n^*= \bigcup_{F} F \subset \bar\calD_n,
\end{align}
where the union extends over all rational boundary components (including the non-proper boundary component $\calD_n$). The action of $\Symp_n(\Q)$ on $\calD_n$ extends to an action on $\calD_n^*$.

\subsection{The cylindrical topology}
Let $0\leq m\leq n$. Recall that there is a holomorphic map
\[
\pi_{n,m}: \H_n\to \H_{m}\cong F_{m},\quad \pi_{n,m}\zxz{\tau_1}{\tau_{12}}{{}^t\tau_{12}}{\tau_2}= \tau_1,
\]
and a real analytic map
\[
\rho_{n,m}: \H_n\to \Sym_{n-m}^+(\R),\quad \rho_{n,m}\zxz{\tau_1}{\tau_{12}}{{}^t\tau_{12}}{\tau_2}= v_2-{}^t v_{12} v_1^{-1} v_{12},
\]
where $v$ denotes the imaginary part of $\tau$.
Both maps are equivariant for the action of the parabolic subgroup $G_{F_{m}}$.


We now recall the definition of the {\em cylindrical topology} on $\calD_n^*$ following 
\cite[Chapter II.6]{Fr} (see also \cite[Section~5]{Na}).
For a matrix $v\in \Sym_{n}^+(\R)$ we define 
\[
\operatorname{m}(v) = \min_{\substack{x\in \Z^n\\ x\neq 0}}
 v[x],
\]
i.e., the minimum of the quadratic form $x\mapsto v[x] = {}^t x v x$ on non-zero integral vectors. If $U\subset \H_{m}$ is open and $C>0$, we consider the open subset 
\begin{align}
\label{eq:w}
W_n(U,C)=\{\tau \in \H_n\mid \; \text{$\pi_{n,m}(\tau)\in U$ and $\operatorname{m}(\rho_{n,m}(\tau))>C$}\} 
\end{align}
of $\H_n$. Note that when $n=m$, we simply have $W_n(U,C)=U$.
To define a basis of neighbourhoods of a point $z$ in the standard boundary component $F_{m}\cong \H_{m}$, we consider the chain of standard boundary components 
\[
\calD_n=F_n > F_{n-1} > \dots > F_{m+1}>F_{m}
\]
adjacent to $F_{m}$. Let $U\subset \H_{m}$ be an open neighbourhood of $z$. 
If $n\geq j\geq m$, we may view 
\[
W_j(U,C)\subset F_{j}
\]
as a subset via the identification $\H_{j}\cong F_{j} $. We use this to 
define a subset of $\calD_n^*$ by 
\begin{align}
\label{eq:wt}
\tilde W_n(U,C ) &= \Gamma_{n,m}^0 \left(\bigcup_{n\geq j\geq m} W_j(U,C)\right).
\end{align}
Note that when $m=n$, then $\tilde W_n(U,C )$ simply reduces to $U$.

\begin{definition}
\label{def:cyltop}
A set $V\subset \calD_n^* $ is called open if 
for all $z\in V$ there exists a $g\in \Symp_n(\Q)$ such that $gz$ is contained in a standard boundary component $F_{m}$ for some $0\leq m\leq n$ and such that 
$gV$ contains a set $\tilde W_n(U,C )$ for some open neighbourhood $U\subset F_{m}\cong \H_{m}$ of $gz$ and some $C>0$.
\end{definition}

\begin{proposition}
The cylindrical topology is the weakest topology on $\calD_n^*$ in which all $\Symp_n(\Q)$-translates of all the sets  $\tilde W_n(U,C)$ are open for $U\subset F_{m}$ open, $0\leq m\leq n$, and $C>0$.
The induced topology on the standard boundary components $F_{m}$ agrees with the usual topology. The set $\calD_n$ is open and dense in $\calD_n^*$. Moreover, $\calD_n^*$ is a Hausdorff space, and $\Symp_n(\Q)$ acts on it by homeomorphisms.
\end{proposition}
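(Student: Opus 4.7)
The plan is to verify the topology axioms for the collection defined in Definition~\ref{def:cyltop}, then dispatch the listed properties in turn. Since openness is formulated pointwise, both $\emptyset$ and $\calD_n^*$ are trivially open and closure under arbitrary unions is immediate. The only nontrivial axiom is stability under finite intersections: given $V_1, V_2$ open and $z\in V_1\cap V_2$, one must find a common distinguished neighborhood $g^{-1}\tilde W_n(U,C)$ of $z$ inside both. I would reduce to the case where $gz\in F_m$ is a standard boundary point and both $V_i$ already contain distinguished neighborhoods coming from translates $g_i \tilde W_n(U_i,C_i)$; using that $\pi_{n,m}$ is continuous for the usual topology on $\H_m$ and that $\operatorname{m}\circ\rho_{n,m}$ is lower semicontinuous in the cylindrical direction, one can shrink $U$ inside $U_1\cap U_2$ and take $C$ larger than $C_1,C_2$. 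Once we know it is a topology, the characterization as the weakest one in which all $\Symp_n(\Q)$-translates of $\tilde W_n(U,C)$ are open is a restatement of the definition.

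For the induced topology on $F_m$, the key observation is that $\Gamma_{n,m}^0\subset G^0_{F_m}$ acts trivially on $F_m$, so $\tilde W_n(U,C)\cap F_m=U$ for any $U\subset F_m\cong\H_m$ open. This shows both inclusions: every usual open of $F_m$ is the trace of a cylindrical open, and conversely a cylindrical open of $\calD_n^*$ meets $F_m$ in a union of usual opens. Openness of $\calD_n$ is immediate by taking $m=n$ in the definition (then $\tilde W_n(U,C)=U$); density of $\calD_n$ follows because the union in \eqref{eq:wt} always contains $W_n(U,C)=U\subset F_n=\calD_n$, hence every nonempty distinguished neighborhood meets $\calD_n$. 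The equivariance of the $\Symp_n(\Q)$-action is then a formality: if $V$ is cylindrical open and $h\in \Symp_n(\Q)$, then for $w=hz\in hV$ we use the distinguished neighborhood of $z$ inside $V$, pre-composed with $h^{-1}$, to obtain one of $w$ inside $hV$.

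The main obstacle is the Hausdorff property. Two points in the same $\Symp_n(\Q)$-translate of a standard stratum $F_m$ are separated using the usual Hausdorff property of $\H_m$ together with the previous paragraph. For two points $z_1,z_2$ lying in distinct strata the strategy is to choose a common reference chart by picking $g\in\Symp_n(\Q)$ such that $gz_1\in F_{m_1}$ and $gz_2\in F_{m_2}$ with $m_1\leq m_2$, then use the projection $\pi_{n,m_1}$ and the minimum function $\operatorname{m}\circ\rho_{n,m_1}$ to decouple the two cases: if $\pi_{n,m_1}(gz_2)$ (or its appropriate analogue under a further $\Gamma_{n,m_1}^0$-translate) differs from $gz_1$, separate in $\H_{m_1}$; otherwise the two points lie in different strata over the same base point, and one separates by choosing $C$ large enough that $\operatorname{m}(\rho_{n,m_1}(\,\cdot\,))>C$ excludes $gz_2$. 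This is where the conditions on $\operatorname{m}$ in \eqref{eq:w} and the presence of all intermediate $F_j$ in \eqref{eq:wt} are genuinely used; the full case analysis is somewhat tedious but follows the argument in \cite[Chapter II.6]{Fr}.
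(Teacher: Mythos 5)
The paper does not actually prove this Proposition: it is recalled as background, with the definition of the cylindrical topology explicitly attributed to \cite[Chapter II.6]{Fr} and \cite[Section~5]{Na}, and the Proposition simply records the properties established there. Your write-up is therefore filling a gap the paper intentionally leaves to the references, and it is appropriate that you too defer the most technical step (the Hausdorff property) to Freitag.

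The easy parts of your sketch are correct. In particular, the identity $\tilde W_n(U,C)\cap F_m = U$ holds: since $\Gamma^0_{n,m}\subset G^0_{F_m}$ fixes $F_m$ pointwise and preserves degrees, any $w\in\tilde W_n(U,C)\cap F_m$ must satisfy $w=\gamma w'$ with $w'\in W_j(U,C)$, and then $w'=\gamma^{-1}w=w\in F_m$ forces $j=m$ and $w\in U$. Likewise openness of $\calD_n$ (take $m=n$), density (every $\tilde W_n(U,C)\supset W_n(U,C)\neq\emptyset$), and the equivariance of the action are exactly as you say.

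There is, however, a genuine gap in the finite-intersection step which you pass over. The two distinguished neighborhoods of $z$ may arise from different group elements $g_1,g_2\in\Symp_n(\Q)$ with $g_1z,g_2z\in F_m$. Setting $h=g_2g_1^{-1}$, one has $h\in\Symp_n(\Q)\cap G_{F_m}$, and one must show that the translate $h^{-1}\tilde W_n(U_2,C_2)$ still contains a distinguished neighborhood $\tilde W_n(U',C')$ of $g_1z$. This is not a matter of simply shrinking $U$ and enlarging $C$: $h$ can have a nontrivial Levi part acting on $\Sym_{n-m}(\R)$, and controlling the condition $\operatorname{m}(\rho_{n,m}(\cdot))>C$ under such an $h$ uses reduction theory for positive matrices (the kind of estimate via Minkowski-reduced domains that Freitag invokes in \cite[Chapter I.2]{Fr}). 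Your appeal to ``lower semicontinuity of $\operatorname{m}\circ\rho_{n,m}$'' does not substitute for this; the required comparison is between the minima of two $\GL$-related quadratic forms, not a continuity statement in the cylindrical variable. In short: once you know $z\in F_m$ and both neighborhoods come from the same $g$, the argument (shrink $U$, raise $C$) is fine, but the reduction to that situation is exactly where the substance lies, and you should either cite Freitag explicitly for it (as you do for Hausdorff) or supply the reduction-theoretic estimate. The same class of estimate underlies the Hausdorff separation of points in the same stratum $F_m$, where one must also verify that disjoint $U_1,U_2\subset\H_m$ give disjoint $\tilde W_n(U_i,C)$; this follows from the $G_{F_m}$-equivariance of $\pi_{n,m}$, which you should state.
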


\begin{remark}
\label{rem:conv}
A sequence 
\[
\tau^{(\nu)}=\zxz{\tau_1^{(\nu)}}{*}{*}{*}
\in \H_n\cong\calD_n
\]
with $\tau_1^{(\nu)}\in \H_{m}$ converges to a boundary point $\tau_1^*\in \H_{m}\cong F_m$, 
if and only if $\tau_1^{(\nu)}\to \tau_1^*$ in the usual sense and 
$\rho_{n,m}(\tau^{(\nu)})\to \infty$. 
Here the latter condition means that for any $C>0$ we have 
$\operatorname{m}(\rho_{n,m}(\tau))>C$
for all but finitely many $\nu$. See e.g.~\cite[Hilfssatz $6.18_1$]{Fr}.
\end{remark}

The following lemmas will be used to construct convenient neighborhoods of boundary components.

\begin{lemma}
\label{lem:nbhd}
Let $U\subset \H_{m}$ be relatively compact. 

(i) There exists a $C>0$ such that every $\gamma\in \Gamma_n$ satisfying 
\[
\gamma (W_n(U,C))\cap W_n(U,C)\neq \emptyset
\]
is contained in $\Gamma_{n,m}$.

(ii)
Moreover, there exists a $C>0$ such that every $\gamma\in \Gamma_n$ satisfying 
\[
\gamma (\tilde W_n(U,C))\cap \tilde W_n(U,C)\neq \emptyset
\]
is contained in $\Gamma_{n,m}$.
\end{lemma}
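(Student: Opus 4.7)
My plan is to derive both statements by contradiction, using three ingredients: (a) the proper discontinuity of $\Gamma_n$ on $\H_n^*$ in the cylindrical topology, (b) the convergence criterion of Remark~\ref{rem:conv}, and (c) the fact that distinct rational boundary components are disjoint. Throughout I exploit the compactness of $\bar U$ in $\H_n^*$, which holds because $U$ is relatively compact in $F_m \cong \H_m$ and the induced topology on $F_m$ agrees with the usual one.

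For part (i), suppose the conclusion fails: there exist $C_k \to \infty$, $\gamma_k \in \Gamma_n \setminus \Gamma_{n,m}$, and $\tau^{(k)} \in W_n(U,C_k)$ with $\gamma_k \tau^{(k)} \in W_n(U,C_k)$. The defining conditions $\pi_{n,m}(\tau^{(k)}) \in U$ and $\operatorname{m}(\rho_{n,m}(\tau^{(k)})) > C_k$, combined with the relative compactness of $U$ and Remark~\ref{rem:conv}, yield (after subsequence) a limit $\tau^{(k)} \to \tau^* \in \bar U \subset F_m$ in the cylindrical topology; the same reasoning applied to the sequence $\gamma_k \tau^{(k)} \in W_n(U,C_k)$ gives $\gamma_k \tau^{(k)} \to \sigma^* \in \bar U$. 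Picking compact neighborhoods of $\tau^*$ and $\sigma^*$ in $\H_n^*$ and invoking proper discontinuity, the set $\{\gamma_k\}$ is finite, so after a further subsequence $\gamma_k = \gamma$ is constant. Continuity of the homeomorphism $\gamma$ yields $\gamma \tau^* = \sigma^* \in F_m$, and since $\tau^* \in F_m$ too, the disjointness of distinct rational boundary components forces $\gamma F_m = F_m$, i.e.\ $\gamma \in \Gamma_{n,m}$, contradicting $\gamma_k \notin \Gamma_{n,m}$.

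For part (ii) I follow the same template, with an extra step to absorb the factor $\Gamma_{n,m}^0$ in $\tilde W_n(U,C)$. Writing each offending $\tau^{(k)} \in \tilde W_n(U,C_k)$ as $\delta_k \sigma^{(k)}$ with $\delta_k \in \Gamma_{n,m}^0$ and $\sigma^{(k)} \in W_{j_k}(U,C_k) \subset F_{j_k}$, and similarly $\gamma_k \tau^{(k)} = \delta_k' \xi^{(k)}$ with $\delta_k' \in \Gamma_{n,m}^0$ and $\xi^{(k)} \in W_{j_k'}(U,C_k)$, I may assume (after subsequence) that $j_k$ and $j_k'$ are constant. Applying the convergence criterion inside $\H_{j_k}^* \cong F_{j_k}^*$ yields $\sigma^{(k)} \to \sigma^* \in \bar U \subset F_m$ and $\xi^{(k)} \to \xi^* \in \bar U$ in the cylindrical topology of $\H_n^*$. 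Setting
\[
\beta_k := (\delta_k')^{-1}\gamma_k\delta_k \in \Gamma_n,
\]
the identity $\beta_k \sigma^{(k)} = \xi^{(k)}$ together with proper discontinuity forces $\{\beta_k\}$ to be finite, so after a further subsequence $\beta_k = \beta$ is constant with $\beta \sigma^* = \xi^* \in F_m$; as in part (i), $\beta \in \Gamma_{n,m}$. Since $\delta_k, \delta_k' \in \Gamma_{n,m}^0 \subset \Gamma_{n,m}$ and $\Gamma_{n,m}$ is a subgroup, $\gamma_k = \delta_k' \beta \delta_k^{-1} \in \Gamma_{n,m}$, contradicting $\gamma_k \notin \Gamma_{n,m}$.

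The one technical point I expect to verify carefully is the compatibility of the cylindrical topology on $\H_n^*$ with its restriction to a standard boundary component $F_j^* \cong \H_j^*$: I need that a sequence $\sigma^{(k)} \in W_j(U,C_k) \subset F_j$ with $C_k \to \infty$ converges in the cylindrical topology of $\H_n^*$ (and not only in the usual topology of $F_j$) to a boundary point in $F_m$. This follows directly from Definition~\ref{def:cyltop}: any basic neighborhood $\tilde W_n(U',C')$ of the limit point $\tau^* \in F_m$ in $\H_n^*$ contains $W_j(U',C')$, and for all sufficiently large $k$ we have $\sigma^{(k)} \in W_j(U',C')$ by the definitions of $\pi_{j,m}$ and $\rho_{j,m}$.
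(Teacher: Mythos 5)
The convergence step (convergence of subsequences of $\tau^{(k)}$ and $\gamma_k\tau^{(k)}$ to boundary points $\tau^*,\sigma^*\in F_m$ in the cylindrical topology) and the concluding step (constancy of the subsequence, continuity, and disjointness of boundary components) match the paper and are fine. The gap is the middle step, where you claim that ``picking compact neighborhoods of $\tau^*$ and $\sigma^*$ in $\H_n^*$ and invoking proper discontinuity, the set $\{\gamma_k\}$ is finite.'' This fails for two reasons. First, $\calD_n^*$ with the cylindrical topology is not locally compact at boundary points, so compact neighborhoods of $\tau^*$ do not exist. Second, and more seriously, even the weak form of proper discontinuity (finiteness of $\{\gamma:\gamma V_x\cap V_y\neq\emptyset\}$ for open $V_x,V_y$) cannot hold at boundary points: the stabilizer of $\tau^*\in F_m$ in $\Gamma_n$ contains the infinite group $\Gamma_{n,m}^0$ (which fixes $F_m$ pointwise), so for any neighborhoods $V,V'$ of $\tau^*$ one has $\gamma\tau^*=\tau^*\in\gamma V\cap V'$ for infinitely many $\gamma$. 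Hence no black-box appeal to proper discontinuity on $\calD_n^*$ can yield finiteness of $\{\gamma_k\}$; in fact, easy examples (e.g.\ translations $\gamma_k=\left(\begin{smallmatrix}1&k\\0&1\end{smallmatrix}\right)$ with $\tau^{(k)}=ik$ in the case $n=1$, $m=0$) show that infinitely many distinct $\gamma_k$ can occur, and the correct conclusion is only the containment $\gamma_k\in\Gamma_{n,m}$, not finiteness.

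The paper circumvents exactly this by first modifying the sequences $z^{(\nu)}$ and $w^{(\nu)}$ by elements $\alpha_\nu,\beta_\nu$ of the (infinite) stabilizers of $z_1$ and $w_1$ — which lie in $\Gamma_{n,m}$, so the conclusion is unaffected — to land both sequences in a fixed Siegel domain $\calF_n(u)$; this uses Freitag's Hilfssatz $6.18_2$, and crucially the shift preserves $z_1^{(\nu)}$ and $\operatorname{m}(\rho_{n,m}(z^{(\nu)}))$. Only then is the finiteness property of Siegel domains applied, which is the correct substitute for proper discontinuity near the boundary. Your argument needs to be supplemented by exactly such a reduction step (your part~(ii) inherits the same gap since it appeals to the same finiteness claim).

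Your final paragraph about the compatibility of the cylindrical topology with the inclusion of $F_j$ is a good point and is handled correctly.
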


\begin{proof}
(i) The following argument is due to Eberhard Freitag.
We decompose any  $z\in \H_n$ as
\begin{align}
\label{eq:dec}
z= \zxz{z_1}{z_{12}}{{}^tz_{12}}{z_2}
\end{align}
with $z_1\in \H_{m}$. 
We argue indirectly. Assume that there exists a sequence $C_\nu\to \infty $ of positive real numbers such that for every $\nu \in \Z_{>0}$ there exists a $\gamma_\nu\in \Gamma_n\setminus \Gamma_{n,m}$ and points $z^{(\nu)}, w^{(\nu)} \in W_n (U,C_\nu)$ satisfying 
\[
\gamma_\nu z^{(\nu)} = w^{(\nu)}.
\]  
By taking a suitable subsequence, we may assume that $z_1^{(\nu)}$ converges to a point $z_1\in \H_{m}$ for $\nu\to \infty$, and $w_1^{(\nu)}$ converges to a $w_1\in \H_{m}$. Hence, with respect to the cylindrical topology we get convergent sequences
\[
z^{(\nu)}\to z_1\in F_{m}, \qquad w^{(\nu)}\to w_1\in F_{m}.
\]

According to \cite[Hilfssatz $6.18_2$]{Fr} there exist elements $\alpha_\nu$ in the stabilizer of $z_1$ inside $\Gamma_n$ such that all 
\[
\tilde z^{(\nu)}=\alpha_\nu ( z^{(\nu)})
\]
are contained in some fixed Siegel domain $\calF_n(u)$. Analogously, there are $\beta_\nu$ in the stabilizer of $w_1$ inside $\Gamma_n$ such that all 
\[
\tilde w^{(\nu)}=\beta_\nu ( w^{(\nu)})
\]
lies in $\calF_n(u)$. Both stabilizers are contained in $\Gamma_{n,m}$. The construction in 
loc.\ cit.\
shows that 
\[
z_1^{(\nu)} = \tilde z_1^{(\nu)},\qquad \operatorname{m}(\rho_{n,m}(z^{(\nu)})) = \operatorname{m}(\rho_{n,m}(\tilde z^{(\nu)})),
\]
and similarly for the $w^{(\nu)}$. Therefore, we may assume without loss of generality from the outset that the sequences $z^{(\nu)}$ and $w^{(\nu)}$ are contained in a fixed Siegel domain $\calF_n(u)$.

The finiteness property of Siegel domains now implies that the $\gamma_\nu$ belong to a finite set. By taking a subsequence we may assume that $\gamma=\gamma_\nu$ is independent of $\nu$.
Since $\gamma$ acts continuously on $\calD_n^*$ we find that 
\[
\gamma (z_1)=w_1.
\]
But this implies $\gamma\in \Gamma_{n,m}$, a contradiction.

%
%
 %
%

(ii) Part  (i) of the lemma immediately implies that there exists a $C>0$ such that for every $j$ with $n\leq j\leq m$ and every $\gamma\in \Gamma_{n,j}$ satisfying 
\[
\gamma (W_j(U,C))\cap W_j(U,C)\neq \emptyset
\]
we have $\gamma\in \Gamma_{n,m}$. We fix such a $C$ and assume that  $z,w\in \tilde W_n(U,C)$ and $\gamma\in \Gamma_n$ with the property that  $\gamma z = w$. 

By definition there exist $n\geq j,j'\geq m$ and $\gamma_1,\gamma_2\in \Gamma_{n,m}^0$  and $z'\in W_j(U,C)$, $w'\in W_{j'}(U,C)$, such that 
\[
z=\gamma_1 z',\qquad w=\gamma_2 w', \qquad \gamma\gamma_1 z' = \gamma_2 w'.
\]
Replacing $\gamma$ by $\gamma_2^{-1}\gamma \gamma_1$ we may assume that $\gamma_1=\gamma_2=1$ and $z=z'$, $w=w'$. Since the action of $\Gamma_n$ preserves the degree of a boundary component, we may further assume that $j=j'$. But then, according to \cite[Hilfssatz 2.5]{Fr}, the condition $\gamma z =w \in W_j(U,C)\subset F_j$ implies that $\gamma\in \Gamma_{n,j}$. Consequently, by our choice of $C$, we may conclude that $\gamma\in \Gamma_{n,m}$.
\end{proof}

We write $z=x+iy$ for the decomposition of $z\in \H_n$ into its real and imaginary part. Moreover, we denote the Jacobi decomposition of $y$ by
\begin{align}
\label{eq:jac}
y=D[W] = {}^t W D W,
\end{align}
where $D$ is a diagonal matrix with diagonal entries $d_1,\dots ,d_n$ and $W=(w_{ij})$ is a unipotent upper triangular matrix. Recall that for $u>0$ the {\em Siegel domain} $\calF_n(u)$ is defined as the set of $z\in \H_n$ satisfying the following conditions:
\begin{enumerate}
\item[(a)] 
$|x_{ij}|<u$ for all $1\leq i,j\leq n$,
\item[(b)] 
$|w_{ij}|<u$ for all $1\leq i<j\leq n$,
\item[(c)] 
$d_{i}<u d_{i+1}$ for all $1\leq i\leq n-1$,
\item[(d)] 
$1< u d_{1}$,
\end{enumerate}
see e.g.~\cite[Chapter II, Definition 1.7]{Fr}. 
The set of positive definite symmetric matrices $y\in \Sym_n^+(\R)$ satisfying conditions (b) and (c) is denoted by $\calR_n(u)$.
%
We also define 
\begin{align}
\label{eq:deffn*}
\calF_n^*(u) = \calF_n(u)\cup \calF_{n-1}(u)\cup \dots\cup \calF_0(u)\subset \calD_n^*
\end{align}
as in \cite[page 98]{Fr}.

\begin{lemma}
\label{lem:nbhd-siegel}

Let $U\subset \H_{m}$ be relatively compact. 

(i) There exists a $C>0$ such that every $\gamma\in \Gamma_n$ satisfying 
\[
\gamma (W_n(U,C))\cap \calF_n(u)\neq \emptyset
\]
is contained in $\Gamma_{n,m}$.

(ii)
Moreover, there exists a $C>0$ such that every $\gamma\in \Gamma_n$ satisfying 
\[
\gamma (\tilde W_n(U,C))\cap  \calF_n^*(u)\neq \emptyset
\]
is contained in $\Gamma_{n,m}$. 
\end{lemma}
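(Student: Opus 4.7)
The plan is to adapt the two-step argument used for Lemma~\ref{lem:nbhd}, exploiting that sequences in a Siegel domain $\calF_n(u)$ accumulate in the cylindrical topology at points of a slightly larger $\calF_n^{*}(u')$, and that the finiteness property of Siegel domains still controls the possible $\gamma \in \Gamma_n$.

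For part~(i), I would argue indirectly: suppose there exist sequences $C_\nu\to\infty$, $\gamma_\nu\in \Gamma_n\setminus\Gamma_{n,m}$, and $z^{(\nu)}\in W_n(U,C_\nu)$ with $w^{(\nu)}:=\gamma_\nu z^{(\nu)}\in \calF_n(u)$. Since $U$ is relatively compact and $C_\nu\to \infty$, pass to a subsequence so that $\pi_{n,m}(z^{(\nu)})\to z_1\in \overline U \subset \H_m$; then $z^{(\nu)}\to z_1\in F_m$ in the cylindrical topology by Remark~\ref{rem:conv}. By \cite[Hilfssatz $6.18_2$]{Fr}, there are $\alpha_\nu$ in the stabilizer of $z_1$ in $\Gamma_n$ (automatically contained in $\Gamma_{n,m}$) such that $\tilde z^{(\nu)}:=\alpha_\nu z^{(\nu)}$ lies in a fixed Siegel domain $\calF_n(u')$, chosen so large that $\calF_n(u)\subset \calF_n(u')$. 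Setting $\tilde\gamma_\nu := \gamma_\nu\alpha_\nu^{-1}$, we have $\tilde\gamma_\nu\notin \Gamma_{n,m}$ and $\tilde\gamma_\nu\tilde z^{(\nu)}=w^{(\nu)}\in \calF_n(u')$. The finiteness property of Siegel domains makes the set of such $\tilde\gamma_\nu$ finite, so after a further subsequence $\tilde\gamma_\nu=\tilde\gamma$ is constant. Standard Minkowski reduction theory (cf.\ \cite[Chapter II]{Fr}) implies that $w^{(\nu)}\in \calF_n(u')$ admits a further subsequence converging in the cylindrical topology to some $w\in \calF_n^{*}(u'')$ for a suitably enlarged constant $u''$. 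Continuity of $\tilde\gamma$ on $\calD_n^{*}$ gives $\tilde\gamma z_1 = w$. Preservation of the degree under the $\Gamma_n$-action forces $w\in F_m$, and since distinct boundary components are disjoint, $\tilde\gamma F_m = F_m$, i.e.\ $\tilde\gamma\in \Gamma_{n,m}$. Hence $\gamma_\nu = \tilde\gamma\alpha_\nu\in \Gamma_{n,m}$, contradicting the assumption.

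For part~(ii), I would reduce to (i) as in the proof of Lemma~\ref{lem:nbhd}(ii). First, applying (i) within each $\Symp_j(\R)$ for $n\geq j\geq m$ (using the standard identification $\H_j\cong F_j$), one obtains a single $C>0$ such that for every such $j$ and every $\gamma\in \Gamma_{n,j}$ the condition $\gamma W_j(U,C)\cap \calF_j(u)\neq\emptyset$ implies $\gamma\in \Gamma_{n,m}$. Now suppose $\gamma\tilde W_n(U,C)\cap\calF_n^{*}(u)\neq\emptyset$: write $z=\gamma_1 z'$ with $\gamma_1\in\Gamma_{n,m}^0$ and $z'\in W_j(U,C)\subset F_j$. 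Then $\gamma\gamma_1 z'\in \calF_{j'}(u)$ for some $j'$, but degree preservation forces $j'=j$; hence $\gamma\gamma_1\in \Gamma_{n,j}$ by \cite[Hilfssatz 2.5]{Fr}, and the strengthened form of (i) gives $\gamma\gamma_1\in\Gamma_{n,m}$. Since $\gamma_1\in\Gamma_{n,m}^0\subset \Gamma_{n,m}$, we conclude $\gamma\in \Gamma_{n,m}$, as desired.

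The main obstacle is the cylindrical convergence step: we need that sequences in $\calF_n(u')$ accumulate at points of some $\calF_n^{*}(u'')$. This follows from Minkowski reduction theory and the description of $\calF_n^{*}$ in \eqref{eq:deffn*}, but has to be invoked carefully so that the action of $\tilde\gamma$ on the limit stays inside the standard chain of boundary components. The rest of the argument is a direct parallel of the proof of Lemma~\ref{lem:nbhd}.
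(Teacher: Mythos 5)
Your argument is correct and follows essentially the same route the paper intends (the text states "This can be proved in the same way as Lemma~\ref{lem:nbhd}", and your two-step argument matches that blueprint, using Hilfssatz~$6.18_2$ and Hilfssatz~2.3 of Freitag, the finiteness property of Siegel domains, continuity on $\calD_n^*$, and degree preservation). One small point you leave implicit: the continuity step $\tilde\gamma z_1=w$ needs $\tilde z^{(\nu)}=\alpha_\nu z^{(\nu)}\to z_1$, which does not follow merely from $\alpha_\nu$ stabilizing $z_1$; as in the proof of Lemma~\ref{lem:nbhd}, one should invoke that the construction of the $\alpha_\nu$ preserves the component $z_1^{(\nu)}$ and the value $\operatorname{m}(\rho_{n,m}(z^{(\nu)}))$, so that $\tilde z^{(\nu)}\to z_1$ by Remark~\ref{rem:conv}.
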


\begin{proof}
This can be proved in the same way as Lemma \ref{lem:nbhd}. Therefore we omit the details.
\end{proof}

We now fix a $u>0$ such that $\calF_j(u)$ is a fundamental set for the action of $\Symp_j(\Z)$ on $\H_j$ for every $0\leq j\leq n$.

\begin{lemma}
\label{lem:nbhd2}
Let $U\subset \calF_m(u)\subset F_m$ be relatively compact. 

(i) There exists a $C>0$ such that 
\[
W_n(U,C)\subset \Gamma_{n,m}^0\calF_n(u).
\]

(ii) There exists a $C>0$ such that 
\[
\tilde W_n(U,C)\subset \Gamma_{n,m}^0\calF_n^*(u).
\]
\end{lemma}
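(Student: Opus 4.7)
I would prove part (i) by direct construction: given $\tau\in W_n(U,C)$, I construct an element $\gamma\in\Gamma_{n,m}^0$ that realizes a block-compatible Siegel reduction of $v=\Im(\tau)$ together with integer translation of the real parts. Part (ii) then follows by applying part (i) within each boundary component $F_j$ for $m\leq j\leq n$.

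For part (i), write $\tau=\kzxz{\tau_1}{\tau_{12}}{{}^t\tau_{12}}{\tau_2}$ and $v=\kzxz{v_1}{v_{12}}{{}^tv_{12}}{v_2}$ accordingly. Since $\tau_1\in U\subset\calF_m(u)$, the block $v_1$ is already Siegel-reduced. The crucial classical observation is that if $\operatorname{m}(\rho_{n,m}(\tau))$ exceeds a threshold depending only on $U$, then for every $x=(x_1,x_2)\in\Z^m\oplus\Z^{n-m}$ with $x_2\neq 0$ one has $v[x]\geq\rho_{n,m}(\tau)[x_2]\geq\operatorname{m}(\rho_{n,m}(\tau))>C$, so the short integer vectors for $v$ all lie in $\Z^m\oplus\{0\}$. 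By classical reduction theory this means the reduction of $v$ can be effected by an integral matrix of block form $a=\kzxz{1_m}{0}{a_3}{u'}$ with $a_3\in\Z^{(n-m)\times m}$ and $u'\in\GL_{n-m}(\Z)$. The corresponding symplectic element $\gamma_1=\kzxz{a}{0}{0}{{}^ta^{-1}}$ lies in $\Gamma_{n,m}^0$ and fixes $\tau_1$. Post-composing with an integer translation $\gamma_2=\kzxz{1_n}{b}{0}{1_n}$, where $b$ is a symmetric integer matrix with vanishing upper-left $m\times m$ block (so $\gamma_2\in\Gamma_{n,m}^0$ and $\tau_1$ is unchanged), one brings the real parts of $\tau_{12}$ and $\tau_2$ into $(-u,u)$. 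Set $\gamma=\gamma_2\gamma_1\in\Gamma_{n,m}^0$.

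To verify $\gamma\tau\in\calF_n(u)$, observe that the Jacobi decomposition of $\Im(\gamma\tau)$ has diagonal $(d_1^{(1)},\ldots,d_m^{(1)},d_1^{(2)},\ldots,d_{n-m}^{(2)})$, with the first $m$ entries coming from $v_1$ and the last $n-m$ from the reduced $\rho_{n,m}$-block, while the off-diagonal Jacobi entries (including those of the mixed upper-right block) are uniformly bounded by $1/2<u$ after full Siegel reduction. Conditions (a), (b), (d), and (c) for $i\neq m$ then follow. The crucial interface condition is (c) at $i=m$, namely $d_m^{(1)}<u\,d_1^{(2)}$. Now $d_m^{(1)}$ is bounded above by a constant $M$ depending only on $U$ (since $v_1$ lies in a compact set), while $d_1^{(2)}=(u'\rho_{n,m}(\tau){}^tu')_{11}$ equals the arithmetic minimum of the reduced block and hence is at least $\operatorname{m}(\rho_{n,m}(\tau))>C$. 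Choosing $C>uM$ therefore suffices.

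For part (ii), both $\tilde W_n(U,C)$ and $\Gamma_{n,m}^0\calF_n^*(u)$ are $\Gamma_{n,m}^0$-invariant, so it suffices to show $W_j(U,C)\subset\Gamma_{n,m}^0\calF_n^*(u)$ for each $m\leq j\leq n$. The cases $j=m$ and $j=n$ are immediate from the definition and from part (i) respectively. For $m<j<n$, apply part (i) inside $\H_j$ (proceeding by induction on $n$) to obtain an element $\gamma_j\in\Symp_j(\Z)\cap G_{F_m}^0$ with $\gamma_j\tau\in\calF_j(u)$. Lifting $\gamma_j$ to an element $\tilde\gamma_j\in\Gamma_{n,m}^0\cap\Gamma_{n,j}$ via the embedding of $\Symp_j$ into the Levi subgroup of $G_{F_j}$ (the lift still pointwise stabilizes $F_m$), we conclude $\tilde\gamma_j\tau\in\calF_j(u)\subset\calF_n^*(u)$. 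Taking $C$ to be the maximum of the finitely many constants required completes the argument. The main technical obstacle is the block-compatible reduction in part (i) and the interface inequality $d_m^{(1)}<u\,d_1^{(2)}$, which hinges on the quantitative relation between the Jacobi diagonal $d_1^{(2)}$ and the arithmetic minimum $\operatorname{m}(\rho_{n,m}(\tau))$, a classical ingredient of Freitag's reduction-theoretic machinery.
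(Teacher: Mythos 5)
Your proof follows essentially the same strategy as the paper's: use the block Jacobi decomposition, observe that the upper-left $m\times m$ data are already reduced because $\tau_1 \in U \subset \calF_m(u)$, construct an element of $\Gamma_{n,m}^0$ that reduces the remaining data, and verify the single remaining Siegel condition $d_m < u\,d_{m+1}$ from the lower bound $C$ via the relation between the arithmetic minimum of the reduced $\rho$-block and its first Jacobi diagonal entry. The difference lies in how you produce the reducing element. The paper constructs it explicitly in three steps (first $A_2 \in \GL_{n-m}(\Z)$ to put $Y_2$ into $\calR_{n-m}(u)$, then $A_{12} \in \Z^{m\times(n-m)}$ to bound the off-diagonal Jacobi block $W_{12}$, then integer translations of the real parts), and observes directly that these leave $\tau_1$ untouched. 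You instead invoke a ``short vectors lie in $\Z^m\oplus\{0\}$, hence the reduction can be chosen block-compatible with identity $(1,1)$-block'' principle. This is a plausible Minkowski-style intuition, but as stated it is not a proof: the Siegel reduction domain $\calR_n(u)$ is defined by the Jacobi conditions (b) and (c), not by successive minima, so the passage from ``short vectors are blockwise located'' to ``the reducing matrix has block form $\kzxz{1_m}{0}{a_3}{u'}$'' needs an argument. The paper's explicit construction is what supplies that argument, and it should be spelled out rather than appealed to as classical.

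Two small inaccuracies worth fixing. First, you assert that $d_1^{(2)} = (u'\rho_{n,m}(\tau)\,{}^tu')_{11}$ \emph{equals} the arithmetic minimum of the reduced block. For $y \in \calR_{n-m}(u)$ one only has $\operatorname{m}(y) \le y_{11} = d_1$ (Freitag, Hilfssatz 1.2), not equality, and this inequality is exactly what is needed and what the paper uses; the ``equals'' should be ``$\geq$''. Second, the claim that the off-diagonal Jacobi entries are bounded by $1/2 < u$ after reduction conflates the reduction target $\calF_n(u)$ (which imposes $|w_{ij}| < u$) with a stronger $1/2$-bound; what you actually achieve with the integer shift $a_3$ is $|w_{ij}| < u$, which suffices. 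Finally, for part (ii) your argument — applying part (i) within each boundary component $\H_j$ for $m \le j \le n$ and lifting $\Gamma_{j,m}^0$ into $\Gamma_{n,m}^0 \cap \Gamma_{n,j}$ via the Levi embedding — correctly fills in what the paper labels an ``immediate consequence'' of (i), and is worth keeping.
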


\begin{proof}
We write $z=x+iy\in \H_n$ for the decomposition in real and imaginary part. 
%
To prove the lemma, we use the block Jacobi decomposition 
\begin{align*}
y= D[W]=  \zxz{Y_1}{0}{0}{Y_2}\left[ \zxz{1_m}{B}{0}{1_{n-m}}\right],
\end{align*}
and put 
\[
D=\zxz{D_1}{0}{0}{D_2}, \qquad W=\zxz{W_1}{W_{12}}{0}{W_2},
\]
where $D_1$ is the diagonal matrix with diagonal entries $d_1,\dots,d_m$, and $D_2$  the diagonal matrix with entries $d_{m+1},\dots,d_n$. The matrices $W_1$ and $W_2$ are unipotent upper triangular. It is easily seen that $Y_1= D_1[W_1]$, $Y_2=D_2[W_2]$, and $B=W_1^{-1} W_{12}$.

(i) Let $z\in W_n(U,C)$. We have to show that there exists a $\gamma\in \Gamma_{n,m}^0$ such that $\gamma z\in \calF_n(u)$. We note that since $z_1\in U$, we have 
$y_1=Y_1= D_1[W_1]\in \calR_m(u)$.
We may act with a matrix in $\Gamma_{n,m}^0$ of the form 
\[
\zxz{{}^tA}{0}{0}{A^{-1}}, \qquad A=\zxz{1_m}{0}{0}{A_2},
\]
where $A_2\in \GL_{n-m}(\Z)$, to transform $Y_2$ to $ Y_2[A_2]\in \calR_{n-m}(u)$. 
Here $Y_1$ remains unchanged. Next we can act 
with a matrix in $\Gamma_{n,m}^0$ of the form 
\[
\zxz{{}^tA}{0}{0}{A^{-1}}, \qquad A=\zxz{1_m}{A_{12}}{0}{1_{n-m}},
\]
where $A_{12}\in \Z^{m\times(n-m)}$, to transform $W_{12}$ to $ W_{12}'=W_1 A_{12} + W_{12}$ with entries $w_{ij}'$ satisfying $|w_{ij}'|<u$. Here $D$, $W_1$, $W_2$ remain unchanged.

Finally, we can act with a matrix 
in $\Gamma_{n,m}^0$ of the form 
\[
M=\zxz{1_n}{T}{0}{1_n}, \qquad T=\zxz{0_m}{T_{12}}{{}^tT_{12}}{T_2},
\]
where $T_{12}\in \Z^{m\times(n-m)}$, $T_{2}\in \Z^{(n-m)\times(n-m)}$, to transform $x_{12}$ to $x_{12} + T_{12}$ and  $x_{2}$ to $x_{2} + T_{2}$. Here $x_1$ and $y$ remain unchanged.

In this way we can transform $z\in W_n(U,C)$ by an element of $\Gamma_{n,m}^0$ to a $z'\in \H_n$ meeting all conditions for $\calF_n(u)$ except for possibly the condition 
\begin{align}
\label{eq:remc}
d_m'<ud_{m+1}'.
\end{align}
Here and throughout we have indicated the Jacobi coordinates of $z'$ by $d_i'$, $w_{ij}'$, and $x_{ij}'$.
We claim that the condition \eqref{eq:remc} 
is also met if we choose $C$ sufficiently large. 
To see this we note that $d_m'$ is bounded, since $z_1'=z_1$ is contained in the bounded set $U$.
Moreover, since $Y_2'= Y_2[A_2]\in \calR_{n-m}(u)$, we have 
\[
\operatorname{m}(Y_2')\leq d_{m+1}'.
\]
see e.g.~\cite[Hilfssatz 1.2]{Fr}. Hence, if 
\[
C>\frac{1}{u}\sup\{ d_m\mid z_1\in U\},
\]
and $z\in W_n(U,C)$ then 
\[
\frac{1}{u}d_{m}' < C <\operatorname{m}(Y_2)=\operatorname{m}(Y_2')\leq d_{m+1}'.
\]
This gives the remaining condition \eqref{eq:remc}.

(ii) This assertion is an immediate consequence of (i).
\end{proof}

%

\subsection{The Satake compactification}

Let $\Gamma\subset \Symp_n(\Q)$ be an {\em arithmetic} subgroup, that is, a subgroup which is commensurable with $\Gamma_n=\Symp_n(\Z)$. Then $\Gamma$ acts properly discontinuously on $\calD_n^*$. The quotient $X_\Gamma^*:=\Gamma\bs \calD_n^*$, equipped with the quotient topology, is a compact Hausdorff space, which contains $X_\Gamma:=\Gamma\bs \calD_n$ as a dense open subset. 
The complex structure on $X_\Gamma$ canonically extends to a complex structure on $X_\Gamma^*$, equipping it with the structure of a normal complex space. It is called the Satake compactification of $X_\Gamma$, see e.g.~\cite[Theorem 10.4]{BB},
\cite[Section~5]{Na}, or \cite[Chapter~II.6]{Fr}.

The boundary 
\begin{align}
\label{eq:boundary}
\partial X_\Gamma^*= X_\Gamma^*\setminus X_\Gamma
\end{align}
is a closed analytic subset  of codimension $n$. If $F$ is a rational boundary component of $\calD_n$ of degree $m$, we let 
\begin{align*}
\Gamma_F&= \Gamma\cap G_F,\\
\Gamma_F^0&= \Gamma\cap G_F^0,\\
\Gamma_{F}' &= \Gamma\cap G_{F}',\\
\Gamma_{F}^J &= \Gamma\cap G_{F}^J.
\end{align*}
We may view 
\[
\bar\Gamma_F= \Gamma_F / \Gamma_F^0
\]
as an arithmetic subgroup of $\Symp_{m}(\R)$.
The quotient 
\[
X_{\Gamma,F} = \bar \Gamma_F\bs F
\]
is isomorphic to a Siegel modular variety of genus $m$.
The boundary decomposes as a finite disjoint union 
\[
\partial X_\Gamma^*= \coprod_{F} X_{\Gamma,F}
\]
of locally closed analytic subsets, where the union runs over the $\Gamma$-classes of proper rational boundary components, see \cite[Corollary 4.11]{BB}. The following two propositions ensure the existence of convenient neighborhoods of the boundary components.

\begin{proposition}
\label{prop:boundary0}
For every rational boundary component $F$ there exists an open neighborhood  $V(F)\subset \calD_n^*$ of $F$ satisfying the following properties:
\begin{itemize}
\item[(i)]
$V(F)$ is invariant under the stabilizer $\Gamma_{n,F}$ of $F$ in $\Gamma_n$.
\item[(ii)]
The natural map 
\[
\Gamma_{n,F}\bs V(F)\to X_{\Gamma_n}^*
\]
is injective.
\item[(iii)]
$V(\delta F)= \delta V(F)$ for all $\delta \in \Gamma_n$.
\item[(iv)]
If $F$ and $F'$ are rational boundary components of degree $m$, we have 
\[
V(F)\cap V(F')\neq\emptyset \quad \Rightarrow\quad F=F'.
\]
\end{itemize}
\end{proposition}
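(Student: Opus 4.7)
The plan is to construct $V(F)$ first for the standard boundary components $F=F_m$ and then propagate the definition $\Gamma_n$-equivariantly to arbitrary rational boundary components, verifying the four properties in order.

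First, I will reduce to the standard boundary components. The group $\Gamma_n=\Symp_n(\Z)$ acts transitively on rational boundary components of each given degree $m$, because by the elementary divisor theorem it acts transitively on totally isotropic $\Z$-sublattices of $\Z^{2n}$ of fixed rank. Thus every rational $F$ of degree $m$ can be written $F=\delta F_m$ for some $\delta\in\Gamma_n$. Given a $\Gamma_{n,F_m}$-invariant neighborhood $V(F_m)$ of $F_m$ satisfying (i) and~(ii), I will define $V(\delta F_m):=\delta V(F_m)$. This is independent of the choice of $\delta$: if $\delta F_m=\delta' F_m$ then $\delta^{-1}\delta'\in\Gamma_{n,F_m}$, and (i) for $V(F_m)$ yields $\delta V(F_m)=\delta' V(F_m)$. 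Property~(iii) is immediate from this definition.

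Next, to construct $V(F_m)$, I will exhaust $F_m\cong\H_m$ by an increasing sequence of relatively compact open subsets $U_1\subset U_2\subset\cdots$ with $\bigcup_k U_k = F_m$. Lemma~\ref{lem:nbhd}~(ii) applied to each $U_k$ produces a threshold $C_k>0$ such that $\tilde W_n(U_k,C_k)$ has the injectivity property with respect to the $\Gamma_n$-action. I will then set
\[
V(F_m) := \bigcup_{\gamma\in\Gamma_{n,m}} \gamma\,\bigcup_{k\ge 1} \tilde W_n(U_k,C_k),
\]
which is open, is $\Gamma_{n,m}$-invariant by construction (giving (i)), and contains $\bigcup_k U_k = F_m$.

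For property~(ii), I will take $z,z'\in V(F_m)$ with $\gamma z = z'$ for some $\gamma\in\Gamma_n$. Writing $z=\gamma_1 w$ and $z'=\gamma_2 w'$ with $\gamma_1,\gamma_2\in\Gamma_{n,m}$, $w\in\tilde W_n(U_i,C_i)$, and $w'\in\tilde W_n(U_j,C_j)$, the element $\delta:=\gamma_2^{-1}\gamma\gamma_1$ sends $w$ to $w'$. Invoking Lemma~\ref{lem:nbhd}~(ii) for the relatively compact union $U_i\cup U_j$ with a sufficiently large threshold then forces $\delta\in\Gamma_{n,m}$, and hence $\gamma\in\Gamma_{n,m}$. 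Property~(iv) follows immediately: if $V(\delta F_m)\cap V(\delta' F_m)\neq\emptyset$, then $\delta z = \delta' z'$ for some $z,z'\in V(F_m)$, so (ii) applied to $\delta'^{-1}\delta$ yields $\delta'^{-1}\delta\in\Gamma_{n,F_m}$, giving $\delta F_m=\delta' F_m$.

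The main technical obstacle will arise in the verification of~(ii): because $F_m/\bar\Gamma_{n,m}$ is non-compact, no single basic cylindrical neighborhood $\tilde W_n(U,C)$ with $U$ relatively compact covers all of $F_m$, forcing the infinite union over the exhaustion. Transferring the injectivity provided pointwise by Lemma~\ref{lem:nbhd}~(ii) on each $\tilde W_n(U_k,C_k)$ to the full union is the delicate point, and I plan to handle it by choosing the thresholds $C_k$ carefully so that any pair of points of $V(F_m)$ related by an element of $\Gamma_n$ lies, after a $\Gamma_{n,m}$-adjustment, inside a common basic neighborhood $\tilde W_n(U_i\cup U_j, C)$ to which Lemma~\ref{lem:nbhd}~(ii) applies directly.
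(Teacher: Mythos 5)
Your overall skeleton — exhaust the boundary component by relatively compact opens, glue the resulting cylindrical neighborhoods, saturate by $\Gamma_{n,m}$, then propagate by transitivity and deduce (iii)–(iv) — is exactly the paper's plan. But the technical heart of the matter, verifying property~(ii), is not actually resolved in your proposal: you acknowledge the delicate point and say you will handle it by ``choosing the thresholds $C_k$ carefully,'' without saying how, and I do not see how to make that work with Lemma~\ref{lem:nbhd} alone.

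Here is the concrete obstruction. Lemma~\ref{lem:nbhd}~(ii) gives, for a relatively compact $U$, a threshold $C(U)$ so that $\gamma\tilde W_n(U,C(U))\cap\tilde W_n(U,C(U))\neq\emptyset$ forces $\gamma\in\Gamma_{n,m}$. As $U$ grows, $C(U)$ must typically grow as well. So when you have $w\in\tilde W_n(U_i,C_i)$ and $w'\in\tilde W_n(U_j,C_j)$ and want to apply Lemma~\ref{lem:nbhd}~(ii) to $U_i\cup U_j$ with its threshold $C$, there is no reason that $w,w'$ lie in $\tilde W_n(U_i\cup U_j,C)$: the set $\tilde W_n(U,C)$ is \emph{decreasing} in $C$, so $\tilde W_n(U_i,C_i)\not\subset\tilde W_n(U_i\cup U_j,C)$ whenever $C>C_i$, which is the generic situation. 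Inductive schemes for choosing $C_k$ run into the same problem, because when adding a new $U_k$ the earlier $C_i$ are already fixed and too small to match the threshold needed for the larger $U$.

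The paper circumvents this by introducing a \emph{fixed} reference set, namely the Siegel fundamental domain $\calF_n^*(u)$, which does not move with the exhaustion. Concretely, it exhausts $\calF_m(u)$ (rather than all of $\H_m$) by relatively compact opens $U_\nu$, and then for each $\nu$ chooses $C_\nu$ simultaneously so that (a) $\tilde W_n(U_\nu,C_\nu)\subset\Gamma_{n,m}^0\calF_n^*(u)$ by Lemma~\ref{lem:nbhd2}, and (b) $\gamma\tilde W_n(U_\nu,C_\nu)\cap\calF_n^*(u)\neq\emptyset\Rightarrow\gamma\in\Gamma_{n,m}$ by Lemma~\ref{lem:nbhd-siegel}~(ii). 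Given $z,w\in V(F_m)$ with $\gamma z=w$, one uses (a) to shift both into $\calF_n^*(u)$ by $\Gamma_{n,m}^0$-translations (which stay inside $S(F_m)$ since the $\tilde W_n(U_\nu,C_\nu)$ are $\Gamma_{n,m}^0$-invariant), and then (b) — applied with the one fixed $\calF_n^*(u)$ and whichever $(U_\nu,C_\nu)$ contains the shifted $z$ — immediately gives $\gamma\in\Gamma_{n,m}$. The threshold-compatibility problem never arises because the second set in the intersection is always $\calF_n^*(u)$, independent of $\nu$. So your proof cannot be completed using Lemma~\ref{lem:nbhd} alone; you need Lemmas~\ref{lem:nbhd-siegel} and~\ref{lem:nbhd2} (or some equivalent anchoring device) as the paper does.
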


\begin{proof}
We first prove (i) and (ii) for the standard boundary components.
Let $m$ be an integer with $0\leq m< n$ and consider the standard boundary component $F_{m}$ of degree $m$. Recall the notation $\Gamma_{n,F_m}= \Gamma_{n,m}$.

Let $u>0$ be such that $\calF_j(u)$ is a fundamental set for the group $\Gamma_j$ for all $0\leq j\leq n$. 
Choose a sequence $U_\nu\subset \calF_m(u)$ of relatively compact open sets (for $\nu \in \Z_{>0}$) such that 
\[
\calF_m(u)= \bigcup_{\nu\geq 1} U_\nu.
\]
According to  Lemma \ref{lem:nbhd2} and Lemma \ref{lem:nbhd-siegel}, we may choose  $C_\nu>0$ such that
\begin{align}
\label{eq:keycond0}
\tilde W_n(U_\nu,C_\nu)\subset \Gamma_{n,m}^0\calF_n^*(u),
\end{align}
and such that every $\gamma\in \Gamma_n$ satisfying 
\begin{align}
\label{eq:keycond}
\gamma (\tilde W_n(U_\nu,C_\nu))\cap  \calF_n^*(u)\neq \emptyset
\end{align}
is contained in $\Gamma_{n,m}$.

The union 
\begin{align*}
S(F_m) &= \bigcup_{\nu\geq 1} \tilde W_n(U_\nu,C_\nu)\subset \calD_n^*
\end{align*}
is an open neighborhood of $\calF_m(u)$. By the choice of $u$, the set
\begin{align}
\label{eq:defWFm} 
V(F_m) &:= \Gamma_{n,m} (S(F_m))
\end{align}
is an open neighborhood of the full boundary component $F_m$. Moreover, by construction, $V(F_m)$ is invariant under $\Gamma_{n,F_m}$. 

We now show the injectivity of the natural map $\Gamma_{n,m}\bs V(F_m)\to X^*_{\Gamma_n}$. To this end, 
let $z,w\in V(F_m)$ and $\gamma\in \Gamma_n$ such that 
\[
\gamma z = w.
\]
We have to show that 
$\gamma\in \Gamma_{n,m}$.

Possibly shifting $z$ and $w$ by elements of $\Gamma_{n,m}$, we can assume that $z$ and $w$ lie in $S(F_m)$. By \eqref{eq:keycond0} we can further assume that they are also contained in $\calF_n^*(u)$. Hence there exist  $\mu,\nu\in \Z_{>0}$ such that 
\begin{align*}
z&\in \tilde W_n(U_\nu,C_\nu)\cap  \calF_n^*(u),\\
w&\in\tilde W_n(U_\mu,C_\mu)\cap  \calF_n^*(u).
\end{align*}
Using condition \eqref{eq:keycond}
we see that $\gamma\in \Gamma_{n,m}$.
This concludes the proof of (i) and (ii) for the standard boundary component $F_m$.

If $F$ is any rational boundary component of degree $m$, we choose $\delta\in \Gamma_n$ such that $F=\delta F_m$ and put 
\[
V(F) = \delta V(F_m).
\]  
By property (i) for $V(F_m)$ this is independent of the choice of $\delta$.
Employing the fact that 
$\Gamma_{n,F_m} = \delta^{-1}\Gamma_{n,F} \delta$, 
it is easily seen that $V(F)$ satisfies properties (i) and (ii) for $F$.
Moreover, in this way (iii) also holds.

Finally, to prove (iv), let $F$ and $F'$ be rational boundary components of degree $m$, and let $z\in V(F)\cap V(F')$. Choose $\delta,\delta'\in \Gamma_n$ such that $F=\delta F_m$ and $F'=\delta' F_m$. Since $V(F)=\delta V(F_m)$ and $V(F')= \delta' V(F_m)$,  there are $z_1, z_2\in V(F_m)$ such that 
\[
z=\delta z_1 = \delta' z_2.
\]
Then $\delta^{-1} \delta' z_2= z_1$, and by (ii) we obtain $\delta^{-1} \delta'\in \Gamma_{n,m}$.
Consequently,
\[
F'= \delta' F_m = \delta (\delta^{-1} \delta') F_m = \delta F_m = F.
\]
This concludes the proof of the proposition.
\end{proof}

\begin{proposition}
\label{prop:boundary}
Let $\Gamma\subset \Symp_n(\Q)$ be an arithmetic subgroup. 
For every rational boundary component $F$ there exists an open neighborhood  $W(F)\subset \calD_n^*$ of $F$ satisfying the following properties:
\begin{itemize}
\item[(i)]
$W(F)$ is invariant under the action of  $\Gamma_F$.
\item[(ii)]
The natural map 
\[
\Gamma_F\bs W(F)\to X_\Gamma^*
\]
is injective.
\item[(iii)]
$W(\gamma F)= \gamma W(F)$ for all $\gamma \in \Gamma$.
\item[(iv)]
If $F$ and $F'$ are rational boundary components of degree $m$, we have 
\[
W(F)\cap W(F')\neq\emptyset \quad \Rightarrow\quad F=F'.
\]
\end{itemize}
\end{proposition}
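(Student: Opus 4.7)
The plan is to mirror the construction in the proof of Proposition \ref{prop:boundary0}, with the arithmetic group $\Gamma$ replacing $\Gamma_n$ throughout. Since $\Gamma$ is commensurable with $\Gamma_n$, the intersection $\Gamma_0 := \Gamma \cap \Gamma_n$ has finite index in $\Gamma$, and one writes $\Gamma = \bigsqcup_{i=1}^{r} g_i \Gamma_0$ for finitely many representatives $g_1 = e, g_2, \ldots, g_r \in \Gamma$.

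The first technical step is to establish $\Gamma$-analogs of Lemmas \ref{lem:nbhd} and \ref{lem:nbhd-siegel}: for each relatively compact $U \subset \H_m$, there exists $C > 0$ such that every $\gamma \in \Gamma$ with $\gamma \tilde W_n(U,C) \cap \tilde W_n(U,C) \neq \emptyset$, respectively $\gamma \tilde W_n(U,C) \cap \calF_n^*(u) \neq \emptyset$, lies in $\Gamma_{F_m} = \Gamma \cap G_{F_m}$. Decomposing $\gamma = g_i \gamma_0$ with $\gamma_0 \in \Gamma_0 \subset \Gamma_n$, I would handle two cases. For each of the finitely many $g_i \notin G_{F_m}$, the sets $\tilde W_n(U,C)$ and $g_i \tilde W_n(U,C)$ are open neighborhoods of the distinct rational boundary components $F_m$ and $g_i F_m$; since the cylindrical topology on $\calD_n^*$ is Hausdorff and $\{\tilde W_n(U,C)\}_{C > 0}$ forms a shrinking neighborhood basis of $F_m$ as $C \to \infty$, one may enlarge $C$ so that $\tilde W_n(U,C) \cap g_i \tilde W_n(U,C) = \emptyset$ for every such bad $g_i$. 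For $g_i \in G_{F_m} \cap \Gamma \subset \Gamma_{F_m}$, Lemma \ref{lem:nbhd}(ii) applied to $\gamma_0 \in \Gamma_n$ forces $\gamma_0 \in \Gamma_{n,F_m} \cap \Gamma_0 \subset \Gamma_{F_m}$, whence $\gamma \in \Gamma_{F_m}$.

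With these analogs in hand, I would follow the proof of Proposition \ref{prop:boundary0} verbatim to construct, for each standard boundary component $F_m$, a $\Gamma_{F_m}$-invariant open neighborhood $W(F_m) \subset V(F_m)$ with $\Gamma_{F_m} \backslash W(F_m) \hookrightarrow X_\Gamma^*$. For a general rational boundary component one picks an orbit representative $F_0$ in each $\Gamma$-orbit and runs the analogous construction using the conjugate arithmetic group $\delta^{-1}\Gamma\delta$, where $\delta \in \Gamma_n$ satisfies $\delta F_m = F_0$; then extends $\Gamma$-covariantly by setting $W(\gamma F_0) := \gamma W(F_0)$ for $\gamma \in \Gamma$, an assignment well defined because $W(F_0)$ is $\Gamma_{F_0}$-invariant. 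Properties (i)--(iii) hold by construction. Property (iv) for $F, F'$ in the same $\Gamma$-orbit follows from (ii) as in the proof of Proposition \ref{prop:boundary0}(iv); for distinct orbits, reducing to $W(F_0) \cap \gamma W(F_0') = \emptyset$ for $F_0, F_0'$ orbit representatives, the trivial coset of $\gamma \in \Gamma/\Gamma_0$ is handled by Proposition \ref{prop:boundary0}(iv) applied to the ambient $V$-neighborhoods, while the finitely many non-trivial cosets can be excluded during the shrinking step via the $\Gamma$-analog of Lemma \ref{lem:nbhd}.

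I expect the main obstacle to be the $\Gamma$-analog of Lemma \ref{lem:nbhd}(ii), especially verifying that $\tilde W_n(U,C) \cap g_i \tilde W_n(U,C) = \emptyset$ for large $C$ when $g_i F_m \neq F_m$ and $g_i \notin \Gamma_n$. Although intuitively clear from the Hausdorff property of the cylindrical topology, a rigorous argument requires adapting the Siegel reduction techniques of \cite{Fr} to cover the finitely many non-trivial cosets in $\Gamma/\Gamma_0$ contributed by elements of $\Gamma \setminus \Gamma_n$.
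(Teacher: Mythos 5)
Your approach diverges sharply from the paper's, and it contains a real gap. The paper sidesteps all of the Siegel-reduction work for general $\Gamma$ with a short trick: choose a congruence subgroup $\Gamma'\subset\Gamma\cap\Gamma_n$ that is normal in $\Gamma$, and set
\[
W(F) \;=\; \bigcap_{\delta\in\Gamma}\delta^{-1}V(\delta F),
\]
where $V$ is the neighborhood assignment from Proposition~\ref{prop:boundary0}. Because $\delta^{-1}V(\delta F)$ depends only on the left coset $\Gamma'\delta$ (by $\Gamma'\subset\Gamma_n$ and property (iii) of Proposition~\ref{prop:boundary0}), this is a finite intersection, hence open. Property (iii) follows by reindexing the intersection, (i) is a special case of (iii), (ii) follows from $W(F)\subset V(F)$ together with Proposition~\ref{prop:boundary0}(iv) applied to $V(\gamma F)\cap V(F)$, and (iv) again reduces to Proposition~\ref{prop:boundary0}(iv). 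No new reduction theory is needed.

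Your plan instead tries to redo the reduction-theoretic argument directly for $\Gamma$, and the case split on $\gamma = g_i\gamma_0$ does not close. If $\gamma\tilde W_n(U,C)\cap\tilde W_n(U,C)\neq\emptyset$, you obtain $\gamma_0\tilde W_n(U,C)\cap g_i^{-1}\tilde W_n(U,C)\neq\emptyset$. In the case $g_i\in\Gamma_{F_m}$ you want to invoke Lemma~\ref{lem:nbhd}(ii) for $\gamma_0$, but that requires $g_i^{-1}\tilde W_n(U,C)=\tilde W_n(U,C)$, and $\tilde W_n(U,C)$ is only $\Gamma_{n,m}^0$-invariant, not $\Gamma_{F_m}$-invariant. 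In the case $g_i\notin G_{F_m}$ your Hausdorff argument addresses $\tilde W_n(U,C)\cap g_i\tilde W_n(U,C)$, not $g_i\gamma_0\tilde W_n(U,C)\cap\tilde W_n(U,C)$; the factor $\gamma_0$ is left unaccounted for, and there is no reason for it to preserve $\tilde W_n(U,C)$. Moreover, the claim that $\{\tilde W_n(U,C)\}_C$ is a shrinking neighborhood basis of $F_m$ is imprecise (it shrinks to $\Gamma_{n,m}^0 U\subset F_m$, not to $F_m$), so turning the Hausdorff intuition into the needed disjointness statement would itself require a compactness argument along the lines of Lemma~\ref{lem:nbhd}(i). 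You flag this yourself at the end; it is in fact the crux, and the route you propose essentially amounts to re-proving Lemmas~\ref{lem:nbhd} and~\ref{lem:nbhd-siegel} for an arbitrary arithmetic $\Gamma$ — which is avoidable, and avoided, by the paper's finite-intersection device.
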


\begin{proof}
In the special case when $\Gamma\subset \Gamma_n$, it is easily seen that we may simply put $W(F)=V(F)$ with $V(F)$ as in Proposition \ref{prop:boundary0}.

Now let $\Gamma\subset \Symp_n(\Q)$ be an arbitrary arithmetic subgroup. We choose a congruence subgroup 
\[
\Gamma'\subset \Gamma\cap \Gamma_n
\]
which is normal in $\Gamma$. For any rational boundary component $F$, we put
\[
W(F) = \bigcap_{\delta\in \Gamma} \delta^{-1} V(\delta F).
\]
Since $\delta^{-1} V(\delta F)=V(F)$ for $\delta \in \Gamma'$,  
this is in fact an intersection of  finitely many different open neighborhoods of $F$ in $\calD_n^*$. Hence it defines an open neighborhood of $F$. We leave it the the reader to verify that properties (i)-(iv) hold.
\end{proof}

\begin{remark}
We may in addition require that the open neighborhood $W(F)$ in Proposition~\ref{prop:boundary} is connected. This follows by replacing $W(F)$ by its connected component containing $F$ if necessary.
\end{remark}

\subsection{Siegel modular forms}
Let $k$ be an integer. We denote the usual action 
of $G=\Symp_n(\R)$ in weight $k$ on functions $f:\H_n\to \C$ by 
\[
(f\mid_k \gamma)(\tau) = \det (c\tau +d)^{-k} f(\gamma \tau)
\]
for $\gamma=\kabcd \in G$.
The composition of the quotient map and the inclusion defines a natural holomorphic map
\[
p:\H_n\to X_\Gamma^*.
\]
We denote by $\omega$ the sheaf of modular forms of weight $1$ on $X_\Gamma^*$.
Recall that if $V\subset X_\Gamma^*$ is open, the module of sections $\omega(V)$ is given by holomorphic functions $f:p^{-1}(V)\to \C$ satisfying 
\[
f\mid_1 \gamma= f 
\]
for all $\gamma\in \Gamma$ and all $\tau \in p^{-1}(V)$. Moreover, when $n=1$, it is also required that $f$ is holomorphic at the cusps contained in $V$.
If $n>1$, then regularity at the boundary is automatically satisfied by the local Koecher principle.

The sheaf $\omega$ is a coherent $\calO_{X_\Gamma^*}$-module on $X_\Gamma^*$, which can be identified with the Hodge bundle. The global sections of $\omega^{\otimes k}$ are given by holomorphic modular forms of weight $k$ for $\Gamma$, see e.g.~\cite[Section 10]{BB}.

We now describe the sheaf $\omega^{\otimes k}$ near a rational boundary component $F$.
By possibly conjugating with an element of $\Gamma_n$ it suffices to do this near the standard boundary components $F_m$, where $0\leq m\leq n$. 
Let $\tau_1\in F_m$ be a boundary point.
According to Proposition \ref{prop:boundary} and Definition \ref{def:cyltop} there exists a relatively compact open neighborhood $U\subset F_m$ of $\tau_1$ and a $C>0$ such that the open neighborhood $\tilde W_n(U,C)\subset \calD_n^*$ defined in \eqref{eq:wt} satisfies 
\begin{align}
\label{eq:h1}
\gamma \tilde W_n(U,C)\cap \tilde W_n(U,C)\neq \emptyset \quad \Rightarrow\quad \gamma\in \Gamma_{F_m}
\end{align}
for $\gamma\in \Gamma$. If $U$ is chosen sufficiently small, then the condition in \eqref{eq:h1} actually implies that $\gamma$ is contained in the stabilizer $\Gamma_{\tau_1}\subset \Gamma$ of $\tau_1$. This follows from the fact that $\Gamma_{F_m}/\Gamma_{F_m}^0$ acts properly discontinuously on the boundary component $F_m$. Possibly replacing $U$ by a smaller open set we may further attain that $\tilde W_n(U,C)$ is invariant under $\Gamma_{\tau_1}$.
Then 
\begin{align}
\label{eq:U}
V= \Gamma_{\tau_1}\bs \tilde W_n(U,C)\subset X_\Gamma^*
\end{align}
is an open neighborhood of $\tau_1$.
%

An element $f\in \omega^{\otimes k}(V)$ is given by a continuous function $f:\tilde W_n(U,C)\to \C$ which is holomorphic on $W_n(U,C)$ and satisfies $f\mid_k \gamma = f$ for all $\gamma\in \Gamma_{\tau_1}$. In particular,  $f$ is invariant under the action of translations of the form
\begin{align}
\label{eq:center} 
\begin{pmatrix}
1 & 0 & 0 & 0\\
0 & 1 & 0 & s\\
0 & 0 & 1 & 0\\
0 & 0 & 0 & 1
\end{pmatrix}\in \Gamma_{F_m}',
\end{align}
where $s\in \Sym_{n-m}(\Z)$. Therefore $f$ has a partial Fourier expansion 
\begin{align}
\label{eq:fj}
f(\tau) = \sum_{T_2\in \Sym_{n-m}(\Q)} \phi_{T_2}(\tau_1,\tau_{12}) \,e(\tr T_2 \tau_2),
\end{align}
which converges normally in a small neighborhood of $U$.
Here the coefficients $\phi_{T_2}$ vanish unless $T_2$ is contained in a sublattice of  $\Sym_{n-m}(\Q)$ with bounded denominators (the character lattice of the torus $G_{F_m}'/\Gamma_{F_m}'$). Moreover, $\phi_{T_2}$ vanishes  if $T_2$ is not positive semi-definite.  We will refer to expansions as in \eqref{eq:fj} as Fourier-Jacobi expansions. 
The transformation behavior of $f$ under matrices of the form 
\begin{align}
\label{eq:matg22}
\begin{pmatrix}
1 & 0 & 0 & 0\\
0 & u & 0 & 0\\
0 & 0 & 1 & 0\\
0 & 0 & 0 & {}^tu^{-1}
\end{pmatrix}\in \Gamma_{F_m}^0
\end{align}
implies that 
\begin{align}
\label{eq:sym0}
 \phi_{T_2[{}^t u^{-1}]} (\tau_1, \tau_{12} u) =  \det(u)^k\cdot
\phi_{T_2} (\tau_1, \tau_{12} ) 
%
\end{align}
for all $u\in \GL_{n-m}(\Q)$ such that \eqref{eq:matg22} belongs to $\Gamma_{F_m}^0$.

Now choose a small open neighborhood $W(F_m)$ of the {\em full} boundary component $F_m$ as in Proposition \ref{prop:boundary}. Then we easily obtain the following lemma.

\begin{lemma}
\label{lem:sect1}
Let 
\begin{align*}
V=\Gamma_{F_m}\bs W(F_m)\subset X_\Gamma^*
\end{align*}
be the open neighborhood of the boundary stratum $X_{\Gamma,F_m}$ induced by $W(F_m)$. 
The space of sections $\omega^{\otimes k}(V)$ is given by those continuous functions 
$f:W(F_m)\to \C$ which are holomorphic on $W(F_m)\cap\calD_n$ and satisfy $f\mid_k \gamma =f $ for all  $\gamma\in \Gamma_{F_m}$. The Fourier-Jacobi coefficients $\phi_{T_2}$ of $f$ in the expansion \eqref{eq:fj} have the transformation behavior 
\begin{align}
\label{eq:fjt}
\phi_{T_2}(\tau_1,\tau_{12}) e(\tr T_2 \tau_2) \mid_k \gamma = \phi_{T_2}(\tau_1,\tau_{12}) e(\tr T_2 \tau_2)
\end{align} 
for all $\gamma$ in the Jacobi group $\Gamma_{F_m}^J = \Gamma\cap G_{F_m}^J$.
\end{lemma}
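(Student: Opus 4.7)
The plan is to glue the pointwise analysis preceding the lemma---which describes sections on a small neighborhood \eqref{eq:U} of a single boundary point $\tau_1 \in F_m$---into a global statement on the neighborhood $W(F_m)$ of the entire boundary component. The main tool is Proposition~\ref{prop:boundary}(ii), which asserts that $\Gamma_{F_m}\backslash W(F_m) \to X_\Gamma^*$ is an open injection onto $V$. Consequently, a section $f \in \omega^{\otimes k}(V)$ pulls back to a $\Gamma_{F_m}$-equivariant (in weight $k$) function on $W(F_m)$, holomorphic on $W(F_m)\cap\calD_n$ and continuous with respect to the cylindrical topology; conversely, any such function descends to an element of $\omega^{\otimes k}(V)$ precisely because of the $\Gamma_{F_m}$-equivariance.

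First I would verify that every $\tau_1 \in F_m$ admits a neighborhood $V_{\tau_1}\subset V$ as in \eqref{eq:U}, so that the discussion around \eqref{eq:fj} applies: on $V_{\tau_1}$, a section has a convergent Fourier-Jacobi expansion whose coefficients $\phi_{T_2}$ are uniquely determined as Fourier coefficients relative to the central torus $G_{F_m}'/\Gamma_{F_m}'$. These local pieces agree on overlaps by uniqueness, yielding a single Fourier-Jacobi expansion \eqref{eq:fj} of $f$ in a neighborhood of $F_m$ inside $W(F_m)$. Conversely, continuity in the cylindrical topology of a $\Gamma_{F_m}$-equivariant holomorphic function on $W(F_m)\cap\calD_n$ follows from the normal convergence of its Fourier-Jacobi expansion near $F_m$, which is ensured by the local Koecher principle applied to the degenerate summands of \eqref{eq:fj}.

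For the transformation law \eqref{eq:fjt}, let $\gamma \in \Gamma_{F_m}^J = \Gamma\cap G_{F_m}^J$. By the definition of $G_{F_m}^J$, conjugation by $\gamma$ centralizes $G_{F_m}'$, so $\gamma$ acts trivially on the character lattice of the torus $G_{F_m}'/\Gamma_{F_m}'$ that indexes the $T_2$. The explicit description following \eqref{eq:GFm} shows that $\gamma$ has block form with $u = \pm 1_{n-m}$; it acts on the $\tau_2$-variable only by an integral symmetric translation, while on $(\tau_1,\tau_{12})$ it acts via the usual Jacobi-group action of weight $k$. Substituting \eqref{eq:fj} into $f\mid_k\gamma = f$ and comparing the $T_2$-homogeneous components, which are linearly independent as exponentials in $\tau_2$, produces \eqref{eq:fjt} for each $T_2$ separately. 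The main delicate step in the whole argument is the gluing in the first paragraph: one must check that the $\Gamma_{F_m}$-invariance extracted near each $\tau_1 \in F_m$ assembles consistently across the full boundary component, for which the injectivity in Proposition~\ref{prop:boundary}(ii) is essential.
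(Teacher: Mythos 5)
Your overall strategy is correct and matches what the paper implicitly intends (the paper supplies no proof, merely asserting the lemma is ``easily obtained'' from the preceding local analysis and Proposition~\ref{prop:boundary}). The key inputs you identify are exactly the right ones: Proposition~\ref{prop:boundary}(i) gives $\Gamma_{F_m}$-invariance of $W(F_m)$, Proposition~\ref{prop:boundary}(ii) gives injectivity of $\Gamma_{F_m}\backslash W(F_m)\to X_\Gamma^*$, so that the bijection between sections of $\omega^{\otimes k}(V)$ and $\Gamma_{F_m}$-equivariant continuous functions on $W(F_m)$ is a routine extension/restriction argument; and the transformation law \eqref{eq:fjt} follows from the fact that $\Gamma_{F_m}^J$ centralizes $G_{F_m}'$, hence preserves the eigenspace decomposition under the torus $G_{F_m}'/\Gamma_{F_m}'$.

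One small imprecision is worth flagging. You write that an element $\gamma\in\Gamma_{F_m}^J$ with $u=\pm 1_{n-m}$ ``acts on the $\tau_2$-variable only by an integral symmetric translation.'' That is not accurate: the embedded $\Symp_m(\Z)$ part of $\Gamma_{F_m}^J$ shifts $\tau_2$ by a term depending on $\tau_1,\tau_{12}$ (of the shape $-{}^t\tau_{12}(c\tau_1+d)^{-1}c\,\tau_{12}$), not by an integral symmetric matrix. This does not damage the argument, because that shift is independent of $\tau_2$, so $e(\tr T_2\tau_2)\mid_k\gamma$ still factors as $\psi(\tau_1,\tau_{12})\,e(\tr T_2\tau_2)$ for the same $T_2$; equivalently, the more robust reason you also give---that $\gamma$ centralizes $G_{F_m}'$ and so preserves the $T_2$-eigenspaces---is what actually justifies equating $T_2$-components. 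You should simply delete the claim about the translation and rely directly on the centralizing property, which you already state.
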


Hence the coefficient $\phi_{T_2}$ is a weakly holomorphic Jacobi form of weight $k$ and index $T_2$ for an arithmetic subgroup of $\Symp_m(\Q)\ltimes \Mat_{m,n-m}(\Q)^2$ which only depends on $\Gamma$ but not on $T_2$.

\section{Formal Siegel modular forms}
\label{sect:3}

We define formal Siegel modular forms for arithmetic subgroups of $\Symp_n(\Q)$. We interpret these as global sections of the formal completion of the sheaf Siegel modular forms along the Baily-Borel boundary.

\subsection{Formal Fourier-Jacobi series}

\label{sect:3.1}

Let $\Gamma\subset\Symp_n(\Q)$ be an arithmetic subgroup, and fix $k\in \Z$.
Let $F$ be a rational boundary component of degree $m$, where
$0\leq m\leq n$.
The center $G_F'$ of the unipotent radical of $G_F$ is isomorphic to the additive group of $\Sym_{n-m}(\R)$, and $\Gamma_F'$ defines a lattice $L_F$ in $G_F'$. Its dual lattice $L_F^\vee$ is the character lattice of the compact abelian group $G_F'/\Gamma_F'$.

\begin{definition}
\label{def:ffjF}
A {\em formal Fourier-Jacobi series} of weight $k$ for the boundary component $F$ and the group $\Gamma$ is a formal series 
\begin{align}
\label{eq:fsgen}
f(\tau)= \sum_{\substack{t\in L_F^\vee\\ t\geq 0}} f_t(\tau),
\end{align}
where the coefficients $f_t$ are holomorphic functions on $\H_n$ satisfying the transformation laws 
\begin{align}
\label{eq:tr1}
f_t\mid_k g & = t(g) \cdot f_t \qquad \text{for all $g\in G_F'$,}\\ 
\label{eq:tr2}
f_t\mid_k \gamma & = f_{p_\ell(\gamma)(t)}  \qquad\text{for all $\gamma\in \Gamma_F$.}
\end{align}
Here $p_\ell(\gamma)(t)$ denotes the action of $\gamma$ on $t\in L_{F}^\vee$ induced by \eqref{eq:pl}. In particular, for $\gamma\in \Gamma_F^J$ the condition in \eqref{eq:tr2} reduces to $f_t\mid_k \gamma  = f_{t}$.
\end{definition}

\begin{remark}
\label{rem:class0}
Let $F'$ be a second rational boundary component of degree $m$ and let $\delta\in \Gamma_n$ 
such that $F'=\delta^{-1}F$.  Then conjugation $g\mapsto \varphi_\delta (g)= \delta g\delta^{-1}$ with $\delta$ defines an automorphism of $G$, which restricts to an isomorphism $G_{F'}\to G_F$. In particular we have $G_{F'}=\delta^{-1}G_F\delta$ and   $(\delta^{-1}\Gamma\delta)_{F'}= \delta^{-1}\Gamma_F\delta$. 
The map $\varphi_\delta$ induces an isomorphism $G_{F'}'/(\delta^{-1}\Gamma\delta)_{F'}'\to G_F'/\Gamma_F'$. Pull back via this isomorphism gives rise to an isomorphism 
$\varphi_\delta^*: L_F^\vee\to L_{F'}^\vee$, $t\mapsto \varphi_\delta^*(t) = t\circ \varphi_\delta$ of the corresponding character lattices.
We define the pull back  $f\mid_k \delta$ of the formal Fourier-Jacobi series $f$ in \eqref{eq:fsgen} by defining the coefficients as
\begin{align}
\label{eq:pb0}
(f\mid_k \delta)_{\varphi^*_\delta(t)}:= f_t\mid_k \delta
\end{align}
for $t\in  L_F^\vee$. It is easily checked that 
\begin{align}
\label{eq:pb}
(f\mid_k \delta) (\tau)= \sum_{t\in L_F^\vee} (f_t\mid_k \delta)(\tau)
\end{align}
defines a formal Fourier-Jacobi series of weight $k$ for the boundary component $F'$ and the conjugate group $\delta^{-1} \Gamma\delta $. That is, we have 
\begin{align*}
f_t\mid_k \delta \mid_k g & = \varphi_\delta^*(t)(g) \cdot f_t \mid_k \delta
\qquad \text{for all $g\in G_{F'}'$,}\\ 
f_t\mid_k \delta \mid_k \gamma &= f_{p_\ell(\gamma)(\varphi^*_\delta (t))}  \mid_k \delta
 \qquad\text{for all $\gamma\in (\delta^{-1} \Gamma\delta)_{F'}$.}
\end{align*}
This definition is compatible with the pull-back of convergent Fourier-Jacobi series.
\end{remark}

\begin{remark}
\label{rem:class}
Assume that $\delta\in \Gamma_n$. Then $f$ is a formal Fourier-Jacobi series of weight $k$ for the boundary component $F$ and the group $\Gamma$ if and only if  $f\mid_k \delta$ is a formal Fourier-Jacobi series of weight $k$ for the boundary component $\delta^{-1} F$ and the group $\delta^{-1}\Gamma\delta$.
\end{remark}

Let $F$ be a rational boundary component of degree $m$ and assume that $E<F$ is a rational boundary component of degree $0$ which is adjacent to $F$. Then we have inclusions of groups
\[
\xymatrix{
G_{F}'\ar[r] & G_{E}' \ar[r]   & G_F\cap G_E\\
\Gamma_{F}'\ar[r] \ar[u]& \Gamma_{E}' \ar[r]  \ar[u] & \Gamma_F\cap \Gamma_E\ar[u]},
\]
which induce an inclusion $G_F'/\Gamma_F'\to G_E'/\Gamma_E'$ and a surjective homomorphism  of the  character lattices $L_E^\vee\to L_F^\vee$, $s\mapsto s\mid L_F$.

Let $f=\sum_{t\in L_F^\vee}f_t$ be a formal Fourier-Jacobi series of weight $k$ for the boundary component $F$ and the group $\Gamma$. The transformation law \eqref{eq:tr2} implies that $f_t$ has a normally  convergent Fourier expansion 
\begin{align}
\label{eq:fj-fe1}
f_t = \sum_{\substack{s\in L_E^\vee \\ s\mid L_F =t}} f_{t,s},
\end{align}
where the coefficients $f_s:= f_{s\mid L_F,s}$ are holomorphic functions on $\H_n$ satisfying the transformation law
\begin{align}
\label{eq:tr11}
f_{s}\mid_k g & = t(g) \cdot f_{s} 
\end{align}
for all $g\in G_E'$ and $s\in L_E^\vee$. Putting these expansions together, we obtain the {\em formal Fourier expansion} 
\begin{align}
\label{eq:fouf1}
f = \sum_{\substack{s\in L_E^\vee }} f_{s}
\end{align}
of $f$ at $E$.

We call the formal Fourier-Jacobi series $f$ {\em regular} at the boundary component $E$ if its formal Fourier expansion \eqref{eq:fouf1}  defines a formal Fourier-Jacobi series of weight $k$ for the boundary component $E$ in the sense of Definition \ref{def:ffjF}. That is, 
$f_{s}=0$ unless $s\geq 0$, and 
\begin{align}
\label{eq:sym1}
f_{s}\mid_k \gamma & = f_{p_\ell(\gamma)(s)}  \qquad\text{for all $\gamma\in \Gamma_E$.}
\end{align}

\begin{remark}
\label{rem:class2}
Assume the above notation. 
If $E'<F$ is another rational boundary component of degree $0$ which is adjacent to $F$,  
we may choose a $\delta\in \Gamma_n$ such that 
\[
F=\delta F;\qquad E=\delta E'.
\] 
Then the pull back $f\mid_k \delta$ 
is a formal Fourier-Jacobi series of weight $k$ for $F$ and the group $\delta^{-1} \Gamma\delta $.
It is easily seen that $f$ is  regular at the boundary component $E$ if and only if $f\mid_k\delta$ is regular at $E'$. 
In particular, $f$ is regular at all adjacent degree $0$ boundary components $E<F$ if and only if it regular for a set of representatives of the $\Gamma_F$-classes of such boundary components.
\end{remark}

In the special case when $F=F_m$ is equal to the standard boundary component of degree $m$ we identify $L_F^\vee$ with a sublattice of  $\Sym_{n-m}(\Q)$ via the symmetric bilinear form $(A,B)\mapsto \tr (AB)$. Then the action of $\gamma\in \Gamma_{F_m}$ as in \eqref{eq:GFm} on $T_2\in L_{F_m}^\vee$ is given by $p_\ell(\gamma)(T_2)= T_2[u]$. We may write the function $f_{T_2}$ 
as 
\begin{align*}
f_{T_2}(\tau) = \phi_{T_2}(\tau_1,\tau_{12}) e(\tr T_2\tau_2),
\end{align*}
where $\phi_{T_2}$ is holomorphic on $\H_n$, independent of $\tau_2$, and satisfies the transformation law 
\begin{align}
\label{eq:tr2b}
\phi_{T_2}(\tau_1,\tau_{12}) e(\tr T_2\tau_2)\mid_k \gamma =\phi_{T_2[u]}(\tau_1,\tau_{12}) e(\tr T_2[u]\tau_2)
\end{align}
for all $\gamma\in \Gamma_{F_m}$.
The above formal series \eqref{eq:fsgen} can be rewritten as
\begin{align}
\label{eq:ffj}
f(\tau) = \sum_{\substack{T_2\in L_{F_m}^\vee\\ T_2\geq 0}} \phi_{T_2}(\tau_1,\tau_{12}) \,e(\tr T_2 \tau_2),
\end{align}
where the coefficients $\phi_{T_2}$ are weakly holomorphic Jacobi forms of index $T_2$ and weight $k$ for the group $\Gamma_{F_m}^J$.
Under this identification the Fourier expansion 
of $f_{T_2}$ at the standard degree $0$ boundary component $F_0<F_m$  can be written as 
\begin{align}
\label{eq:fj-fe}
\phi_{T_2}(\tau_1,\tau_{12}) = \sum_{T_1 \in \Sym_{m}(\Q)} \sum_{T_{12}\in\Mat_{m,n-m}(\Q)} a\zxz{T_1}{T_{12}}{{}^tT_{12}}{T_2} e(\tr T_1 \tau_1 + 2\tr {}^tT_{12} \tau_{12}).
\end{align}
Putting these expansions together, we obtain the  formal Fourier expansion 
\begin{align}
\label{eq:fouf}
f(\tau) = \sum_{T\in \Sym_{n}(\Q)} a(T)  \,e(\tr T \tau)
\end{align}
at $F_0$. 
Moreover, $f$  
is regular at  $F_0$ if 
$a(T)=0$ unless $T\geq 0$, and 
\begin{align}
\label{eq:sym}
\det(u)^k a(T)= a(T[u]) 
\end{align}
for all $T\in L_{F_0}^\vee\subset \Sym_{n}(\Q)$ and all $\zxz{u}{s}{0}{{}^tu^{-1}}\in \Gamma_{F_0}$.

Now assume that $F$, $F'$ are two rational boundary component of degree $m$ and that there exist a  rational boundary component $E$ of degree $0$ such that $F>E$ and $F'>E$. 
Let $f$, $f'$ be formal Fourier-Jacobi series of weight $k$ for the group $\Gamma$ and the boundary components $F$, $F'$, respectively.
Then $f$ and $f'$ have formal Fourier expansions 
\begin{align*}
f = \sum_{\substack{s\in L_E^\vee }} f_{s}, \qquad 
f' = \sum_{\substack{s\in L_E^\vee }} f'_{s}
\end{align*}
at $E$ as in \eqref{eq:fouf1}. We say that $f$ is {\em compatible} with $f'$ at $E$ if 
these formal expansions agree.

\begin{definition}
\label{def:symffj}
Let $I_m$ be the set of all rational boundary components of degree $m$.
A {\em formal Siegel modular form} of weight $k$ and cogenus $l$
 for the group  $\Gamma$ is a family $(f_F)_{F\in I_{n-l}}$, where $f_F$ is a  formal Fourier-Jacobi series of weight $k$ for the boundary component $F$ and the group $\Gamma$ satisfying the following conditions:
\begin{itemize}
\item[(i)] for all $F\in I_{n-l}$ and all $\gamma \in \Gamma$ we have 
$f_F\mid_k \gamma= f_{\gamma^{-1} F}$;
\item[(ii)] for all pairs $F,F'\in I_{n-l}$ and all degree $0$ boundary components $E\in I_0$ with  $F>E$ and $F'>E$, the formal Fourier-Jacobi series $f_F$ and $f_{F'}$ are compatible at $E$.
\end{itemize}
We write $
\operatorname{FM}^{(n,l)}_k(\Gamma)$ for the complex vector space of formal Siegel modular forms of weight $k$ and cogenus $l$ for $\Gamma$.
\end{definition}


\begin{remark}
\label{rem:regu}
1. Because of condition (i), the family $(f_F)_{F\in I_{n-l}}$ is determined by the $f_F$ for $F$ in a system of representatives for $I_{n-l}/\Gamma$.
According to Remark \ref{rem:class2} it suffices to check condition (ii) for a set of representatives of $\Gamma_F$-classes of adjacent degree $0$ rational boundary components $E<F$. 

2. Conditions (i) and (ii) imply that $f_F$ is regular at all adjacent degree $0$ rational boundary components $E<F$. In fact, if $\gamma\in \Gamma_E$, then $E<\gamma^{-1} F$. Applying condition (ii) for $f_F$ and $f_{\gamma^{-1}F}$ we see that the transformation law \eqref{eq:sym1} holds.
\end{remark}

\begin{example}
1. Let $f\in M_k(\Gamma)$ be a holomorphic Siegel modular form of weight $k$ for $\Gamma$.
Then at each boundary component $F\in I_{n-l}$ the function $f$ has a (convergent) Fourier-Jacobi expansion $f_F$  as in \eqref{eq:fsgen}, which we may as well view as a formal series.
The family $(f_F)_{F\in I_{n-l}}$ defines a  formal Siegel modular form of weight $k$ and cogenus $l$ for $\Gamma$. 

2. Let $V\subset X_\Gamma^*$ be an open neighborhood of the boundary $\partial X_\Gamma^*$. Then any section $f\in \omega^{\otimes k}(V)$ defines an element of $\operatorname{FM}^{(n,l)}_k(\Gamma)$.

3.) Assume that $\Gamma=\Gamma_n$. Then any symmetric formal Fourier–Jacobi series of weight $k$ and cogenus $l$ in the sense of \cite{BR} defines an element of $\operatorname{FM}^{(n,l)}_k(\Gamma_n)$.
\end{example}

\subsection{Formal completion of the sheaf of modular forms} 
\label{sect:3.2}

Throughout this section we fix an arithmetic subgroup $\Gamma\subset \Symp_n(\Q)$. We briefly write $X$ for $X_\Gamma$ and $X^*$ for $X_\Gamma^*$.
We denote by 
\[
Y=\partial X^*\subset X^* 
\]
the boundary of $X$ as in \eqref{eq:boundary}.
%
Then $Y$ is a closed analytic subset of codimension $n$
of the projective complex algebraic variety $X^*$.
If $n>1$ then $Y$ is connected.

We let $\hat X^*=(\hat X^*, \calO_{\hat X^*})$ be the formal complex space given by the completion of $X^*$ along $Y$, and we write 
\begin{align}
\label{eq:complmor}
i:\hat X^*\to X^*
\end{align}
for the natural morphism of formal complex spaces. 
We denote by $\hat\omega^{\otimes k}$ the completion of the sheaf $\omega^{\otimes k}$ of modular forms of weight $k$ with respect $Y$. Since $\omega^{\otimes k}$ is coherent, the natural map $i^*( \omega^{\otimes k})\to \hat\omega^{\otimes k}$ is an isomorphism. 
The adjunction map $\omega^{\otimes k}\to i_*i^* \omega^{\otimes k}$ defines an injective map on global sections
\begin{align}
\label{eq:keymap}
\omega^{\otimes k}(X^*)\to \hat\omega^{\otimes k}(\hat X^*).
\end{align}

Let $W(F_m)$ be a connected open neighborhood of $F_m$ as in Proposition \ref{prop:boundary}, and put 
\begin{align*}
V=\Gamma_{F_m}\bs W(F_m)\subset X^*.
\end{align*}
As before, we view $\Gamma_{F_m}'$ as a lattice $L_{F_m}$ in $\Sym_{n-m}(\Q)$, and identify its dual $L_{F_m}^\vee$ with a sublattice of the same space via the symmetric bilinear form $(A,B)\mapsto \tr (AB)$. 
Recall the description of $\omega^{\otimes k}$ on $V$ given in Lemma \ref{lem:sect1}. 
We now give a description of the completion $\hat\omega^{\otimes k}$ on this boundary stratum.

\begin{proposition}
\label{prop:compl}
The space $\hat\omega^{\otimes k}(V)$ of sections is given by the space of all
{\em formal} series 
\begin{align}
\label{eq:partly}
f(\tau) = \sum_{\substack{T_2\in L_{F_m}^\vee\\ T_2\geq 0 }}
\phi_{T_2}(\tau_1,\tau_{12}) \,e(\tr T_2 \tau_2)
\end{align}
satisfying the following conditions:
\begin{itemize}
\item[(i)] The coefficients $\phi_{T_2}(\tau_1,\tau_{12})$ are holomorphic on $W(F_m)\cap \calD_n$ for all $T_2\in L_{F_m}^\vee$. 

\item[(ii)] The transformation law $f\mid_k \gamma = f$ holds for all $\gamma\in \Gamma_{F_m}$.
\item[(iii)] For all $S\in L_{F_m}$ which are primitive, positive semidefinite  of rank $1$, and for all $t\in \Z_{\geq 0}$, the sub-series 
\[
\sum_{\substack{T_2\in L_{F_m}^\vee\\ T_2\geq 0 \\ \tr(T_2 S)=t}}
\phi_{T_2}(\tau_1,\tau_{12}) \,e(\tr T_2 \tau_2)
\]
converges normally on $W(F_m)\cap \calD$ and defines a holomorphic function there. 
\end{itemize}
\end{proposition}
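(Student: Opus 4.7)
The plan is to identify $\hat\omega^{\otimes k}(V)$ with an analogous space of formal sections on a smooth toroidal compactification, where the completion admits a concrete description in toric coordinates. I would choose a smooth $\Gamma$-admissible polyhedral decomposition of the rational closure of the positive-definite cone, and let $\pi:\tilde X\to X^*$ be the associated toroidal compactification (as in \cite{AMRT}). Then $\tilde Y=\pi^{-1}(Y)$ is a normal-crossings divisor, $\pi$ is proper, and the line bundle of modular forms extends to a line bundle $\tilde\omega$ on $\tilde X$ with $\pi_*\tilde\omega^{\otimes k}=\omega^{\otimes k}$ (a form of the Koecher principle). The Grothendieck comparison theorem for formal complex analytic spaces \cite[Theorem~2]{Ba}, applied to $\pi$ and the coherent sheaf $\tilde\omega^{\otimes k}$, then yields a natural isomorphism
\begin{align*}
\hat\omega^{\otimes k}(V)\;\cong\; \hat{\tilde\omega}^{\otimes k}\bigl(\pi^{-1}(V)\bigr),
\end{align*}
where the right-hand side denotes sections of the completion of $\tilde\omega^{\otimes k}$ along $\tilde Y$.

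I would then describe $\hat{\tilde\omega}^{\otimes k}(\pi^{-1}(V))$ explicitly. A neighborhood of the preimage of the stratum $X_{\Gamma,F_m}$ is covered by affine toric charts attached to top-dimensional cones $\sigma$ of a fan inside $L_{F_m}\otimes\R$. On each such chart one has analytic coordinates $(\tau_1,\tau_{12})$ together with toric coordinates $q_\alpha$ indexed by the rays of $\sigma$, and the completion along $\tilde Y$ consists of formal power series in the $q_\alpha$ with coefficients that are holomorphic in $(\tau_1,\tau_{12})$. Rewriting these in terms of the variables $e(\tr T_2\tau_2)$ with $T_2\in\sigma^\vee\cap L_{F_m}^\vee$ and glueing across the charts of the fan produces exactly formal series of the form \eqref{eq:partly} indexed by positive semidefinite $T_2\in L_{F_m}^\vee$, whose coefficients $\phi_{T_2}$ are holomorphic on $\pi^{-1}(V)\cap\calD_n = W(F_m)\cap\calD_n$; this gives condition (i). Descent from $W(F_m)$ to the quotient $V$ imposes $\Gamma_{F_m}$-equivariance, yielding condition (ii).

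To obtain condition (iii) I would apply the same comparison argument to the partial compactification attached to a single ray $\R_{\geq 0}S$, where $S\in L_{F_m}$ is primitive, positive semidefinite of rank one. This intermediate morphism $\pi_S:\tilde X_S\to X^*$ contributes a single smooth divisor $D_S$ over the stratum of $F_m$, along which the formal completion of the coherent sheaf $\tilde\omega_S^{\otimes k}$ is a formal power series in a single coordinate $q_S = e(\tr T\tau_2)$ (for a normalised $T$) with coefficients holomorphic on the open part. Regrouping the full formal series by the value of $\tr(T_2 S)$ identifies the coefficient of $q_S^t$ with the cross-section appearing in (iii), which is therefore normally convergent and holomorphic on $W(F_m)\cap\calD_n$. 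The converse direction---that any formal series satisfying (i)--(iii) assembles into a formal section on $\tilde X$---is obtained by running the same local identifications in reverse. The main technical obstacle I anticipate is verifying that the one-ray convergence condition (iii), combined with the $\Gamma_{F_m}$-invariance (ii), is sufficient to produce coherent sections on every top-dimensional toric chart of a maximal admissible fan, so that the reverse reconstruction indeed glues to a bona fide global section on $\pi^{-1}(V)$.
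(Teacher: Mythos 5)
Your overall strategy is the same as the paper's: pull back to a smooth toroidal compactification, invoke Bănică's comparison theorem for formal complex-analytic spaces to identify $\hat\omega^{\otimes k}(V)$ with the completion of $\tilde\omega^{\otimes k}$ over $\pi^{-1}(V)$, and then describe that completion in toric coordinates. That much is correct and matches the paper, which proves a companion statement (Proposition~\ref{prop:compl2} in the source) characterizing the completed sections on the toroidal side in terms of convergence of the subseries $\sum_{(T_2,e_\rho)=t}$ along \emph{every} ray $\rho$ of the fan $\Sigma_{F_m}$, using an explicit local computation of $\lim_n\calO/(z_1\cdots z_r)^n$ for polydisc neighborhoods of toric strata (Propositions~\ref{prop:2hol} and~\ref{prop:4hol}).

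The gap is precisely in how you propose to obtain condition~(iii). Applying the comparison theorem to a partial compactification $\tilde X_S$ attached to a single ray $\R_{\geq 0}S$ does not work: such a torus embedding is not proper over $X^*$, so Bănică's theorem does not apply to $\pi_S$, and in any case the formal completion in the statement is taken along the full Satake boundary $Y$, whose preimage in any toroidal model contains many components beyond the single divisor $D_S$. You in fact flag the real difficulty yourself — showing that convergence along rank-one directions plus $\Gamma_{F_m}$-equivariance suffices to control the subseries along \emph{arbitrary} ray generators $e_\rho$ of a maximal admissible fan, including those with rank $>1$ — but you do not supply an argument. The paper resolves this by an elementary linear-algebra step: diagonalize $e_\rho$ as $\gamma D\,{}^t\gamma$ with $D=\operatorname{diag}(d_1,\dots,d_{n-m})$, $d_1>0$, and observe that for any positive semidefinite $T_2$ one has $0\le d_1(T_2,\gamma E_{11}{}^t\gamma)\le (T_2,e_\rho)$, so that the subseries $\sum_{(T_2,e_\rho)=t}$ sits inside a finite union of the rank-one subseries $\sum_{(T_2,S)\le\frac{c}{d_1}t}$ with $S=c\gamma E_{11}{}^t\gamma$ primitive rank one. (The converse inclusion — that each rank-one $S$ actually spans a ray of $\Sigma_{F_m}$ — comes from the $\Gamma$-admissibility of the fan, which forces $\Sigma_{F_m}$ to restrict to $\Sigma_E$ for each adjacent degree $n-1$ boundary component $E$.) Without some argument of this kind, the equivalence of your condition~(iii) with the toroidal description is unestablished.
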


\begin{remark}
1. In condition  (ii) the transformation law is to be understood coefficientwise as in \eqref{eq:tr2b}.

2. In condition (iii) of Proposition \ref{prop:compl}, the matrix $S$ determines a $1$-dimensional rational isotropic subspace of $U(F_m)$, and therefore a degree $n-1$ rational boundary component $E$ with $E\geq F_m$.
Now condition (iii) means that in the formal Fourier-Jacobi expansion of $f$ with respect to $E$  all formal Fourier-Jacobi coefficients (of cogenus $1$)  converge.

3. In the case  $m=n-1$, the lattice $L_{F_m}$ has rank $1$, and hence $L_{F_m}=h\Z$ for some positive rational number $h$. Then $L_{F_m}^\vee=h^{-1}\Z$ and the conditions of Proposition \ref{prop:compl} mean that 
\begin{align}
\label{eq:exp2}
f(\tau) = \sum_{T_2\in \frac{1}{h}\Z_{\geq 0} } \phi_{T_2}(\tau_1,\tau_{12}) \,e( T_2 \tau_2),
\end{align}
is a formal series, whose coefficients $\phi_{T_2}(\tau_1,\tau_{12})$ are holomorphic on 
$\H_{n-1}\times \C^{n-1}$ and satisfy the transformation law (ii) of a Jacobi form for $\gamma\in \Gamma_{F_{n-1}}$.
\end{remark}

We now turn to the proof of Proposition \ref{prop:compl}, which will occupy the rest of this subsection.
We use the Grothendieck comparison theorem  \cite[Theorem 4.1.5]{EGA} in the category of formal complex analytic spaces \cite[Theorem 2]{Ba} to reduce the computation to a smooth toroidal compactification. 

\subsubsection{Toroidal compactification}

We begin by recalling some facts about toroidal compactifications. Our main references are \cite{AMRT} and \cite{Na}.

Let $\Sigma =(\Sigma_F)_F$ be a $\Gamma$-admissible collection of 
fans 
as in \cite{AMRT}, Definition~5.1 in Chapter~3. 
According to Theorem 5.2 of loc.~cit.\ there exist a toroidal compactification $X^{\tor}= X^{\tor}_\Sigma$ of $X=X_\Gamma$ associated with $\Sigma$. 
Throughout we assume that $\Sigma$ is smooth. Then $X^{\tor}$ is a compact Moishezon space (i.e.\ an algebraic space over $\C$)  which is smooth in the orbifold sense. It contains $X$ as a dense open subset and the complement is a divisor with normal crossings.
Moreover, there is a proper morphism 
\[
\pi: X^{\tor}\to X^*
\]
to the Baily-Borel compactification $X^*$ which restricts to the identity on $X$. We define the sheaf of weight $1$ modular forms on $X^{\tor}$ as the pull back $\omega^{\tor}= \pi^*(\omega)$. The local Koecher principle implies that 
$\pi_*(\omega^{\tor}) = \pi_*\pi^* (\omega) = \omega$.

Let $F$ be a rational boundary component of degree $m$. Let $W(F)$ be a connected open neighborhood of $F$ as in Proposition \ref{prop:boundary}, and put 
\begin{align*}
V=\Gamma_{F}\bs W(F)\subset X^*.
\end{align*}
We write $\hat V=i^{-1}(V)$ for the inverse image of $V$ under the morphism $i:\hat X^*\to X^*$.
Moreover, we denote by $\hat X^{\tor}$ the completion of $X^{\tor}$ with respect to the toroidal boundary divisor $\pi^{-1}(Y)$, and by $\hat V^{\tor}$ the completion of $V^{\tor}=\pi^{-1}(V)$ with respect to $\pi^{-1}(Y\cap V)$. Then we have the commutative diagram of formal complex spaces
\[
\xymatrix{
\hat V^{\tor} \ar[r] \ar[d] & \hat X^{\tor} \ar[r]^{i^{\tor}} \ar[d]^{\hat \pi} & X^{\tor}\ar[d]^\pi\\
\hat V \ar[r] & \hat X^* \ar[r]^i & X^*},
\]
where the vertical morphisms are proper. We denote by $\hat\omega^{\tor}$ the completion of $\omega^{\tor}$  with respect to $\pi^{-1}(Y)$. Since $\omega^{\tor}$ is coherent, the natural map $(i^{\tor})^*( \omega^{\tor})\to \hat\omega^{\tor}$ is an isomorphism, see \cite[Lemma 2.3]{Ba}.

We now recall the description of $X^{\tor}$ near a rational boundary component $F$ of degree $m$.
Let $\calD(F)=G_{F,\C}'\cdot  \calD_n$ be the Siegel domain of the third kind associated with $F$, see \cite[Definition III.4.5]{AMRT}, and put $\calD(F)'=\calD(F)/G_{F,\C}'$. 
Then there is a two step holomorphic fibration 
\[
\calD(F)\to \calD(F)'\to F,
\]
which is equivariant for the action of $G_F\cdot G_{F,\C}'$. Here $\calD(F)\to \calD(F)'$ is a principal $G_{F,\C}'$-bundle, and $\calD_n\subset \calD(F)$ is an open subset.
If $F=F_m$ is the standard boundary component of degree $m$ we have 
\begin{align*}
\calD(F_m) &=  \left\{ \zxz{\tau_1}{\tau_{12}}{{}^t\tau_{12}}{\tau_2}\mid \;\tau_1\in \H_m, \;\tau_{12}\in \C^{m\times (n-m)},\; \tau_2\in \Sym_{n-m}(\C)\right\},
\end{align*}
and the map $\calD(F)\to \calD(F)'$ is the natural projection to the $\H_m\times \C^{m\times (n-m)}$ part.

The quotient 
\[
T_F= G_{F,\C}'/\Gamma_F'
\]
is a complex algebraic torus of rank $r=\frac{1}{2}(n-m)(n-m+1)$, and 
\begin{align}
\label{eq:fb1}
\Gamma_F'\bs\calD(F)\to \calD(F)'
\end{align}
is a principal $T_F$-bundle containing $\Gamma_F'\bs\calD_n$ as an open subset. The fan $\Sigma_F$ determines a torus embedding
\begin{align}
\label{eq:fb2}
T_F\to T_{F,\Sigma_F}.
\end{align}
The toroidal variety on the right hand side has an open covering by affine toric varieties 
\begin{align}
\label{eq:fb3}
T_F\to T_{F,\sigma}
\end{align}
for the cones $\sigma \in \Sigma_F$. Recall that if $N$ denotes the co-character lattice of $T_F$ and $N^\vee$ its dual (the character lattice), then $T_{F,\sigma} =\Spec \C[\sigma^\vee\cap N^\vee]$.
By taking the contraction product of \eqref{eq:fb1} and \eqref{eq:fb2}, we obtain the fiber bundle 
\begin{align}
\label{eq:fb4}
(\Gamma_F'\bs\calD(F))_{\Sigma_F}:=(\Gamma_F'\bs\calD(F))\times^{T_F}  T_{F,\Sigma_F}
\end{align}
over $\calD(F)'$ associated to \eqref{eq:fb1} with fiber $T_{F,\Sigma_F}$.
Now define 
\begin{align}
\label{eq:fb5}
(\Gamma_F'\bs\calD_n)_{\Sigma_F}
\end{align}
as the interior of the closure of $\Gamma_F'\bs\calD_n$ in $(\Gamma_F'\bs\calD(F))\times^{T_F}  T_{F,\Sigma_F}$. It can be viewed as a partial compactification of $\Gamma_F'\bs\calD_n$ in the direction $F$. The group $\Gamma_F/\Gamma_F'$ acts properly discontinuously on $(\Gamma_F'\bs\calD_n)_{\Sigma_F}$, and there is a holomorphic map 
\[
 (\Gamma_F/\Gamma_F')\bs (\Gamma_F'\bs\calD_n)_{\Sigma_F}\to X^{\tor},
\]
which restricts to an isomorphism in a sufficiently small neighborhood of the $F$-stratum of the toroidal boundary, see \cite{AMRT}, p.~175. In particular, $V^{\tor}$ can be identified with an open subset of the left hand side as follows. Define
\begin{align*}
\big(\Gamma_F'\bs (W(F)\cap\calD_n) \big)_{\Sigma_F}
\end{align*}
as the interior of the closure of $\Gamma_F'\bs (W(F)\cap\calD_n)$ in $(\Gamma_F'\bs\calD(F))\times^{T_F}  T_{F,\Sigma_F}$.
The group $\Gamma_F/\Gamma_F'$ acts properly discontinuously, and the holomorphic map 
\[
 (\Gamma_F/\Gamma_F')\bs \big(\Gamma_F'\bs  (W(F)\cap\calD_n)\big)_{\Sigma_F} \to X^{\tor},
\]
is an open immersion with image $V^{\tor}$. 

For the rest of this subsection we assume that $F=F_m$ is the standard boundary component of degree $m$. The space of sections $\omega^{\otimes k}(V^{\tor})$ is given by all continuous functions $f:W(F_m)\to \C$ that are holomorphic on $W(F_m)\cap \calD_n$ and satisfy $f\mid_k \gamma = f$ for all $\gamma\in \Gamma_{F_m}$.  Then $f$ has a Fourier-Jacobi expansion of the form 
\begin{align}
\label{eq:partly2}
f(\tau) = \sum_{\substack{T_2\in L_{F_m}^\vee\\ T_2\geq 0 }}
\phi_{T_2}(\tau_1,\tau_{12}) \,e(\tr T_2 \tau_2).
\end{align}
Let $\rho\in \Sigma_{F_m}$ be a ray (i.e.\ a cone of dimension $1$), and let $e_\rho\in L_{F_m}$ be the unique primitive ray generator. The ray determines a toroidal boundary divisor $D_\rho$, and we write $\mathcal{I}_\rho\subset \calO_{X^{\tor}}$ for the corresponding ideal sheaf. Then  $(\omega^{\otimes k}\otimes \mathcal{I}_\rho)(V^{\tor})$ is given by the subspace of those $f\in \omega^{\otimes k}(V^{\tor})$ whose  Fourier-Jacobi coefficients $\phi_{T_2}$ vanish identically  for all $T_2\in L_{F_m}^\vee$ with $(T_2,e_\rho)=0$.
The following proposition gives a description of the formal completion $(\hat \omega^{\tor})^{\otimes k}$ over  $V^{\tor}$.

\begin{proposition}
\label{prop:compl2}
The space $(\hat \omega^{\tor})^{\otimes k}(V^{\tor})$ of sections is given by the space of all \emph{formal} series as in \eqref{eq:partly2} satisfying the following conditions:
\begin{itemize}
\item[(i)] The coefficients $\phi_{T_2}(\tau_1,\tau_{12})$ are holomorphic on $W(F_m)\cap \calD_n$ for all $T_2\in L_{F_m}^\vee$. 
\item[(ii)] The transformation law $f\mid_k \gamma = f$ holds for all $\gamma\in \Gamma_{F_m}$ in the sense of \eqref{eq:tr2b}.
\item[(iii)] For all rays $\rho\in \Sigma_{F_m}$ with primitive ray generator $e_\rho\in L_{F_m}$ and for all $t\in \Z_{\geq 0}$,
the sub-series 
\begin{align}
\label{eq:subs}
\sum_{\substack{T_2\in L_{F_m}^\vee\\ T_2\geq 0 \\ (T_2, e_\rho)=t}}
\phi_{T_2}(\tau_1,\tau_{12}) \,e(\tr T_2 \tau_2)
\end{align}
converges normally on $W(F_m)\cap \calD_n$ and defines a holomorphic function there. 
\end{itemize}
\end{proposition}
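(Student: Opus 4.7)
My strategy is to reduce the computation to a toric model using the description of $V^{\tor}$ as a quotient of a partial toric compactification, then identify the sections of the formal completion with compatible truncation systems, and finally translate the truncation condition into condition (iii) via a decomposition by ray level sets.

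First, I would exploit the identification (recalled before the proposition) of $V^{\tor}$ as an open subset of $(\Gamma_{F_m}/\Gamma_{F_m}') \bs \bigl(\Gamma_{F_m}' \bs (W(F_m) \cap \calD_n)\bigr)_{\Sigma_{F_m}}$. Trivialising $\omega^{\tor}$ by its usual automorphy factor, a section of $(\omega^{\tor})^{\otimes k}$ on $V^{\tor}$ corresponds to a holomorphic function on the preimage in $W(F_m) \cap \calD_n$ that is $\Gamma_{F_m}$-equivariant in the weight $k$ sense. The $\Gamma_{F_m}'$-invariance produces a Fourier-Jacobi expansion of the form \eqref{eq:partly2}; the regularity across the partial toric compactification imposes $\phi_{T_2} = 0$ unless $T_2$ pairs nonnegatively with every cone of $\Sigma_{F_m}$, which gives $T_2 \geq 0$ since $\Sigma_{F_m}$ is supported in the positive cone of $\Sym_{n-m}(\R)$.

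Next, I would describe the boundary ideal sheaf $I$ of $\pi^{-1}(Y) \cap V^{\tor}$ locally. Since $\Sigma_{F_m}$ is smooth, near a point on the stratum attached to a cone $\sigma = \langle e_{\rho_1}, \dots, e_{\rho_s}\rangle$ there are étale toric coordinates $q_i = q^{\alpha_i}$ with $\alpha_i$ dual to $e_{\rho_i}$, together with coordinates along $\sigma^\perp$ and along $\calD(F_m)'$, in which the boundary is cut out by $q_1 \cdots q_s = 0$. Hence $I$ is locally principal, generated by $q_1 \cdots q_s$, so a Fourier-Jacobi monomial $\phi_{T_2} \, q^{T_2}$ lies in $I^t$ precisely when $(T_2, e_{\rho_i}) \geq t$ for every ray $\rho_i$ of $\sigma$. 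Reducing modulo $I^t$ therefore retains exactly the terms with $\min_i (T_2, e_{\rho_i}) < t$.

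By definition $(\hat \omega^{\tor})^{\otimes k}(V^{\tor}) = \varprojlim_t \bigl((\omega^{\tor})^{\otimes k}/I^t\bigr)(V^{\tor})$, so a formal section is a compatible family of such truncations, one for each maximal cone $\sigma$ and each $t$. A local truncation is represented by a genuine holomorphic function on the complement $V^{\tor} \setminus \pi^{-1}(Y)$, so the Fourier coefficients $\phi_{T_2}$ (extracted by Fourier inversion in $\tau_2$) are holomorphic, yielding (i); the $\Gamma_{F_m}$-equivariance passes to the inverse limit, giving (ii). For the equivalence with condition (iii), observe that for fixed $\sigma$,
\[
\{T_2 \geq 0 : \min_i (T_2, e_{\rho_i}) < t\} = \bigcup_{i=1}^{s}\bigcup_{j=0}^{t-1} \{T_2 \geq 0 : (T_2, e_{\rho_i}) = j\}.
\]
Thus normal convergence of every single-ray, fixed-level sub-series as in (iii) implies normal convergence of all local truncations, and conversely, given a truncation, Fourier inversion in $\tau_2$ along the direction $e_\rho$ extracts each of its fixed-level sub-series and certifies its convergence.

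The main technical obstacle is controlling sections of the quotient $(\omega^{\tor})^{\otimes k}/I^t$ over the open set $V^{\tor}$, i.e., verifying that every such quotient section is represented by a partial Fourier-Jacobi sum as expected. This requires either a vanishing argument for $H^1(V^{\tor}, I^t (\omega^{\tor})^{\otimes k})$ on a suitable Stein cover respecting the toric strata, or a direct gluing argument using the coherence of the sheaves $(\omega^{\tor})^{\otimes k}$ and $I^t$, combined with the fact that the Fourier-Jacobi coefficients are canonically determined independently of the local chart.
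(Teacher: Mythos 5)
Your proposal is correct and follows essentially the same route as the paper: the toric fiber bundle reduction, the local description of the boundary ideal by ray generators, and the decomposition of a truncation into single-ray fixed-level sub-series are exactly the content of the paper's Propositions~\ref{prop:4hol} and~\ref{prop:2hol}. The obstacle you flag at the end does not in fact require $H^1$ vanishing or a separate gluing argument: Proposition~\ref{prop:2hol} resolves it by working over a Stein polydisc and factoring the completion through the single-coordinate quotients $\lim_n \calO_U/(z_i)^n$, whose sections over $U$ are computed directly as $\lim_n \calO_U(U)/(z_i)^n$; your index-set identity is the combinatorial shadow of the injection $\varphi$ appearing there.
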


\begin{proof}
The result is a consequence of the fiber bundle structure \eqref{eq:fb4} and Proposition~\ref{prop:4hol} below on toroidal varieties.
\end{proof}

\begin{proof}[Proof of Proposition \ref{prop:compl}]
According to the Grothendieck comparison theorem \cite[Theorem 2]{Ba}, the natural map 
\begin{align}
\hat \omega = (\pi_* \omega^{\tor})\,\hat{} \to \hat\pi_*(\hat \omega^{\tor})
\end{align}
is an isomorphism of $\calO_{\hat X^*}$-modules. In particular, 
there is a natural isomorphism
\begin{align*}
\hat \omega^{\otimes k} (V) \cong (\hat \omega^{\tor})^{\otimes k}(V^{\tor}),
\end{align*}
and therefore we may compute the left hand side 
using Proposition \ref{prop:compl2}.
Since the first two conditions of Proposition \ref{prop:compl2} agree with the first two conditions of Proposition \ref{prop:compl} it remains to compare the corresponding third conditions. 

We first notice that any $S\in L_{F_m}\subset \Sym_{n-m}(\Q)$ which is primitive and positive semidefinite  of rank $1$ determines a $1$-dimensional rational subspace of the isotropic subspace $U(F_m)\cong \R^{n-m}$ as in \eqref{eq:UFm}. It is given as the orthogonal complement of $\ker(S)$ with respect to the standard scalar product on $\R^{n-m}$. Therefore $S$ determines a rational boundary component $E$ of degree  $n-1$ with $E\geq F_m$, and $\Z S=L_E\subset L_{F_m}$. Moreover, it determines a ray $\R_{\geq 0} S\in \Sigma_E$, and hence, according to \cite{AMRT} Definition~5.1 in Chapter~3, also a ray in $\Sigma_{F_m}$. 
Consequently, condition (iii) in Proposition \ref{prop:compl2} implies (iii) in Proposition~\ref{prop:compl}.

It remains to show the implication in the other direction. Assume that condition (iii) in Proposition \ref{prop:compl} holds. Let $\rho\in \Sigma_{F_m}$ be a ray whose primitive ray generator $e_\rho\in L_{F_m}$ has rank greater than $1$, and let $t\in \Z_{\geq 0}$. We need to prove that the 
series \eqref{eq:subs}
converges.

Let $\gamma\in \GL_{n-m}(\Q)$ such that $D:=\gamma^{-1} e_\rho {}^t\gamma^{-1}$ is diagonal. Without loss of generality, we may assume that $D=\operatorname{diag}(d_1,\dots,d_{n-m})$ with $d_i\in \Z_{\geq 0}$ and $d_1>0$. 
Let $E_{ii}\in \Sym_{n-m}(\Q)$ be the symmetric matrix with entries all $0$ except for a $1$ at the position $(i,i)$.
For  every positive semidefinite $T_2\in \Sym_{n-m}(\Q)$ we have 
\[
0\leq \sum_{i=1}^{n-m} d_i  \cdot (T_2,E_{ii})=(T_2,D) .
\]
Hence, $0\leq  d_1 (T_2, E_{11}) \leq (T_2,D)$.
In particular, we obtain 
\[
0\leq d_1 ({}^t\gamma T_2 \gamma,E_{11})\leq ({}^t\gamma T_2 \gamma,D),  
\]
and thereby 
\[
0\leq d_1 ( T_2, \gamma E_{11}{}^t\gamma)\leq ( T_2, \gamma D{}^t\gamma)=  ( T_2, e_\rho). 
\]
Let $c\in \Q_{>0}$ such that 
\[
S:= c \cdot \gamma E_{11}{}^t\gamma\in L_{F_m}^\vee
\]
is primitive. By construction, $S$ is positive semidefinite of rank $1$, and 
\[
0\leq ( T_2, S)\leq \frac{c}{d_1}  ( T_2, e_\rho).
\]
Hence if $(T_2,e_\rho)=t$, then $0\leq (T_2,S) \leq \frac{c}{d_1}t$. 
Consequently, the series in \eqref{eq:subs} is a sub-series of 
\[
\sum_{\substack{T_2\in L_{F_m}^\vee\\ T_2\geq 0 \\ (T_2,S)\leq\frac{c}{d_1}t}}
\phi_{T_2}(\tau_1,\tau_{12}) \,e(\tr T_2 \tau_2).
\]
The latter series converges normally, since it is a finite sum of normally convergent series by condition (iii) of Proposition \ref{prop:compl}. Therefore,  \eqref{eq:subs} is also normally convergent.
This concludes the proof of the Proposition.
\end{proof}

\subsubsection{Completion of toroidal varieties}

Here we summarize some facts about the completion of toroidal varieties at their boundary. This can be reduced to considering affine toric varieties.

Let $r$ and $d$ be positive integers with $1\leq r\leq d$. 
Let $\calO_{\C^d}$ be the sheaf of holomorphic functions on $\C^d$ and consider the ideal sheaf in $I\subset\calO_{\C^d}$ generated by $(z_1\cdots z_r)$.

%
%

\begin{proposition}
\label{prop:2hol}
Let $a=(a_1,\dots,a_d)\in \C^d$ with $a_1\cdots a_r=0$. Let $U\subset \C^d$ be an open polydisc of radius $R=(R_1,\dots,R_d)$ around $a$. Assume  that $R_i< |a_i|$ for all $i\in \{1,\dots,r\}$ with $a_i\neq 0$.
The family of natural maps of sheaves 
\[
\calO_{U}/(z_1\cdots z_r)^n\to  \calO_{U}/(z_1-a_1,\dots,z_d-a_d)^n
\]
for $n\in \Z_{>0}$ induces an injective map
\[
\left(\lim_n \calO_{U}/(z_1\cdots z_r)^n \right)(U) \to \C[[z-a]].
\]
Its image is given by those formal power series 
\begin{align}
\label{eq:hl3}
f=\sum_{\nu=(\nu_1,\dots,\nu_d)\in \N_0^d} c_\nu \cdot (z-a)^\nu
\end{align}
for which the subseries
\begin{align}
\label{eq:hl4}
f_{i,t} = \sum_{\substack{\nu=(\nu_1,\dots,\nu_d)\in \N_0^d\\ \nu_i=t}} c_\nu \cdot (z-a)^\nu
\end{align}
converge normally on $U$ for all $i\in J_a:=\{i\in \N\mid 1\leq i\leq r,\; a_i= 0\}$
and for all $t\in \N_0$, and hence define holomorphic functions there.
\end{proposition}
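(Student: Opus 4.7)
The plan is to give a concrete description of the quotients $\calO(U)/(z_1\cdots z_r)^n\calO(U)$ in terms of Taylor coefficients at $a$, and then determine which coherent sequences of such coefficients arise from the inverse limit. Since $U$ is a Stein open set, Cartan's Theorem B yields $(\calO_U/(z_1\cdots z_r)^n)(U)=\calO(U)/(z_1\cdots z_r)^n\calO(U)$ for each $n$, and since sections commute with inverse limits of sheaves, a global section of $\lim_n \calO_U/(z_1\cdots z_r)^n$ over $U$ is a coherent sequence of classes $([f_n])_n$ represented by holomorphic functions $f_n\in\calO(U)$ satisfying $f_{n+1}-f_n\in(z_1\cdots z_r)^n\calO(U)$.

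The hypothesis $R_i<|a_i|$ for $i\in\{1,\dots,r\}\setminus J_a$ guarantees that $z_i$ is nowhere zero on $U$, hence a unit in $\calO(U)$, for such $i$. Consequently
\begin{align*}
(z_1\cdots z_r)^n\calO(U) \;=\; \bigl(\textstyle\prod_{i\in J_a} z_i^n\bigr)\calO(U).
\end{align*}
Since $a_i=0$ for $i\in J_a$, so that $z_i=z_i-a_i$ for these indices, expanding any $f\in\calO(U)$ as a normally convergent Taylor series $\sum_\nu c_\nu(f)(z-a)^\nu$ on $U$ yields the key characterization: $f\in\bigl(\prod_{i\in J_a}z_i^n\bigr)\calO(U)$ if and only if $c_\nu(f)=0$ for every $\nu$ with $\nu_i<n$ for some $i\in J_a$. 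The forward direction follows by factoring out $\prod_{i\in J_a}(z_i-a_i)^n$; the reverse by dividing the Taylor series coefficientwise, the quotient series inheriting the same domain $U$ of normal convergence.

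With this identification, for each fixed $\nu$ the coefficient $c_\nu(f_n)$ stabilizes once $n$ exceeds $\nu_i$ for some $i\in J_a$ (possible because $J_a\ne\emptyset$), and setting $c_\nu:=\lim_n c_\nu(f_n)$ defines the natural map into $\C[[z-a]]$. Injectivity is immediate: if every $c_\nu=0$, then $c_\nu(f_n)=0$ for every $\nu$ with some $\nu_i<n$, so $f_n\in(z_1\cdots z_r)^n\calO(U)$ and $[f_n]=0$. Necessity of the image condition is likewise direct: for $i\in J_a$ and $t\in\N_0$, the series $f_{i,t}=\sum_{\nu\colon\nu_i=t}c_\nu(z-a)^\nu$ agrees coefficientwise with a subseries of the Taylor expansion of $f_{t+1}$ at $a$, and since $f_{t+1}$ is holomorphic on the polydisc $U$ this subseries converges normally on $U$.

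Sufficiency is the main obstacle. Given a formal series for which every $f_{i,t}$ is normally convergent on $U$, we must produce, for each $n$, a holomorphic $f_n\in\calO(U)$ whose Taylor coefficients agree with the prescribed $c_\nu$ on the set $\{\nu\colon\exists\, i\in J_a,\ \nu_i<n\}$. When $|J_a|>1$ the naive candidate $\sum_{t=0}^{n-1}f_{i_0,t}$ fails, because it vanishes at $\nu$ with $\nu_{i_0}\ge n$ but $\nu_i<n$ for some other $i\in J_a$. Instead, applying inclusion-exclusion to $\bigcup_{i\in J_a}\{\nu\colon\nu_i<n\}$ we set
\begin{align*}
f_n \;=\; \sum_{\emptyset\ne S\subseteq J_a}(-1)^{|S|+1}\!\!\sum_{(t_i)_{i\in S}\in\{0,\dots,n-1\}^S}\;\sum_{\substack{\nu\colon\nu_i=t_i\\ \forall\, i\in S}} c_\nu (z-a)^\nu.
\end{align*}
For any chosen $i_0\in S$ the innermost sum is a subseries of $f_{i_0,t_{i_0}}$, and since normal convergence is inherited by subseries it defines a holomorphic function on $U$; the finite alternating combination gives $f_n\in\calO(U)$, and coherence of the sequence $(f_n)_n$ follows from the construction.
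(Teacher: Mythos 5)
Your proof is correct, and it takes a slightly different route from the paper's. The paper factors the comparison map through the product of sheaves $\prod_{i\in J_a}\calO_U/(z_i)^n$, proving injectivity of the first arrow stalk-wise and then identifying $\lim_n\calO_U(U)/(z_i)^n$ with the ring of formal series whose slices $f_{i,t}$ converge; the image of $\varphi_U$ is then read off from the fact that all the compositions $\pi_{j,U}\circ\varphi_U$ coincide. You instead work directly at the level of global sections: using Cartan's Theorem B (and that $\Gamma(U,-)$ commutes with inverse limits) to identify $(\calO_U/(z_1\cdots z_r)^n)(U)$ with $\calO(U)/(\prod_{i\in J_a}z_i^n)\calO(U)$, and then characterizing membership in that ideal by vanishing of Taylor coefficients — which makes injectivity and necessity immediate without passing through the auxiliary product sheaf. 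For the surjectivity onto $B$ both proofs produce the same truncations $F_n=\sum_{\nu:\exists i\in J_a,\,\nu_i<n}c_\nu(z-a)^\nu$; the paper simply asserts that these converge (implicitly by domination from the $f_{i,t}$), whereas you justify it explicitly with the inclusion–exclusion decomposition into subseries of the $f_{i_0,t_{i_0}}$. Your inclusion–exclusion identity does indeed give the indicator of $\{\nu:\exists i\in J_a,\,\nu_i<n\}$, and coherence of the resulting sequence $(f_n)$ follows from your ideal characterization, so the argument is complete. The two approaches are of comparable length; yours is a bit more computational and self-contained, the paper's a bit more sheaf-theoretic.
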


\begin{proof}
By the hypothesis on $a$ the set $J_a$
is non-empty.
For $n\in \N$, consider the morphisms of $\calO_U$-modules 
\begin{align}
\label{eq:hl1}
\xymatrix{
\displaystyle\calO_U/(z_1\cdots z_r)^n \ar[r]^\varphi & \displaystyle\prod_{\substack{i\in J_a}} \calO_U/(z_i)^n \ar[r]^-{\pi_j} & \displaystyle\calO_U/((z_1-a_1)^n,\dots, (z_d-a_d)^n).
}
\end{align}
Here $\varphi$ is induced by the quotient maps, and the $\pi_j$ are obtained by composing the $j$-th projection with the quotient map for $j\in J_a$.
It is easily checked on stalks that $\varphi$ is injective. Notice that for $i\in \{1,\dots,r\}$ with $a_i\neq 0$  the class of $z_i$ is invertible in $\calO_{U,b}$ for all $b\in U$ by the assumption on $U$.
%
By taking the limit over $n$ and sections over $U$, we obtain
\begin{align}
\label{eq:hl2}
\xymatrix{
\displaystyle \big(\lim_n \calO_U/(z_1\cdots z_r)^n\big)(U) \ar[r]^{\varphi_U} & \displaystyle\prod_{\substack{i\in J_a}} \big(\lim_n \calO_U/(z_i)^n\big)(U) \ar[r]^-{\pi_{j,U}} & \displaystyle \C[[z-a]].
}
\end{align}
Since the limit and the global sections functors are left exact, the map $\varphi_U$ is injective. 

Let $i\in J_a$. Since $U$ is a Stein domain, by taking power series expansions, the limit
\[
\big(\lim_n \calO_U/(z_i)^n\big)(U)
\cong \lim_n \calO_U(U)/(z_i)^n
\]
can be identified with the subalgebra of $\C[[z-a]]$ consisting of those formal power series 
as in \eqref{eq:hl3} for which the subseries
\begin{align*}
f_{i,t} = \sum_{\substack{\nu=(\nu_1,\dots,\nu_d)\in \N_0^d\\ \nu_i=t}} c_\nu \cdot (z-a)^\nu
\end{align*}
converge on $U$ for all 
$t\in \N_0$,
and hence define holomorphic functions there. In particular,  the maps $\pi_{j,U}$ are injective. Moreover,  the composition $\pi_{j,U}\circ \varphi_U$ is independent of $j$. Therefore the image $\Im(\varphi_U)$ of the map $\varphi_U$ is contained in the space $B$ of formal power series as claimed in the statement of the proposition. 

To prove the inclusion 
$B\subset\Im(\varphi_U)$, let $f\in B$ and denote its power series expansion as in \eqref{eq:hl3}. Let $n\in \N$. The convergence of the subseries in \eqref{eq:hl4} for all 
$i\in J_a$ and all $t\in \N_0$ implies that 
\[
F_n:= \sum_{\substack{\nu\in \N_0^d\\ \exists i\in J_a:\; \nu_i<n}} c_\nu\cdot  (z-a)^\nu 
\]
converges and hence defines an element of $\calO_U(U)$. 
Moreover, the image $F_n+(z_1\cdots z_r)^n\in \calO_U(U) /(z_1\cdots z_r)^n$
is mapped to $F_{n-1}+(z_1\cdots z_r)^{n-1}\in \calO_U(U) /(z_1\cdots z_r)^{n-1}$ under the natural projection.  Consequently, the family $F=(F_n)_{n\in \N}$ defines an element of the projective limit $\lim _n \calO_U(U)/ (z_1\cdots z_r)^n$ with the property that 
$\varphi_U (F)= f$.
This concludes the proof of the proposition. 
\end{proof}

Let $N\cong \Z^d$ be a lattice with dual $M=N^\vee$. Write $\langle n,m \rangle$ for the natural $\Z$-bilinear pairing of $n\in N$ and $m\in N^\vee$.
Recall that a {\em rational polyhedral cone} $\sigma$ in $N_{\mathbb{R}}$ is a subset which is generated over $\mathbb{R}_{\geq 0}$ by finitely many vectors $n_1,\dots,n_l\in N$, that is, 
\[
\sigma=\{ t_1n_1+\dots + t_ln_l\mid \; t_1,\dots t_l\in \R_{\geq 0} \}.
\]
The \emph{dual cone} of $\sigma$ is defined by  
\[
\sigma^{\vee}=\big\{m\in N^{\vee}_{\mathbb{R}}\mid \; \text{$\langle n,m\rangle\geq 0$ for all $n\in\sigma$}\big\}.
\] 
It is a rational polyhedral cone in $N^{\vee}_{\mathbb{R}}$. 
The cone $\sigma$ is called \emph{strongly convex} if it contains no full lines. This is equivalent to $\sigma^{\vee}$ being of full dimension $d$ in $N^{\vee}_{\mathbb{R}}$. 
%
For any rational polyhedral  cone $\sigma$, the monoid $\sigma^{\vee}\cap N^{\vee}$ is finitely generated, and the monoid algebra $A_\sigma=\C[\sigma^\vee\cap N^\vee]$ is a finitely generated $\C$-algebra. The affine toric variety 
\[
T_{\sigma}=\operatorname{Spec} \C[\sigma^\vee\cap N^\vee]
\] 
associated with $\sigma$ is a normal irreducible affine algebraic variety over $\C$, containing the torus $T_{N}=
\operatorname{Spec} \C[N^\vee]$ as an open dense subvariety.

Let $\Sigma$ be a rational fan in $N_\R$, that is, a collection of strongly convex rational polyhedral cones in $N_\R$ satisfying the conditions:
\begin{itemize}
\item If $\sigma \in \Sigma$ and $\tau\leq \sigma$ is a face of $\sigma$, then $\tau\in \Sigma$.
\item If $\sigma \in \Sigma$ and $\tau\in \Sigma$, then $\sigma\cap \tau\leq \sigma$ and $\sigma\cap \tau\leq \sigma$.
\end{itemize}
We write $T_\Sigma$ for the toric variety over $\C$ associated with $\Sigma$. It is obtained by gluing the $T_\sigma$ for $\sigma\in \Sigma$ along common faces, see e.g.~\cite{Fu}. 

The dimension of a cone $\sigma$ in $N_\R$ is the dimension of the $\R$-vector space $\sigma+(-\sigma)$. A rational polyhedral cone is called {\em smooth} if it is generated by part of a basis of $N$. In this case it has exactly $l=\dim\sigma$ edges (i.e.~$1$-dimensional faces) and a canonical set of generators (which is part of a basis of $N$) is given by the ray generators of the edges of $\sigma$.
A fan $\Sigma$ is called smooth if all its cones are smooth. This is equivalent to $T_\Sigma$ being smooth.

We now consider the toric variety $X_\Sigma^{\operatorname{an}}$ associated with a smooth rational fan $\Sigma$ in $N_\R$ as a complex analytic space.
Let $\hat{X}_\Sigma^{\mathrm{an}}$ be the completion of $X_\Sigma^{\mathrm{an}}$ at the  toric boundary divisor 
\[
D_\Sigma = \sum_{\substack{\tau\in \Sigma\\ \dim \tau =1}} D_\tau,
\]
where $D_\tau$ denotes the toric divisor associated with a ray $\tau$. We write $e_\tau\in N\cap\tau$ for the ray generator of $\tau$.
Proposition \ref{prop:2hol} immediately implies the following result.

\begin{proposition}
\label{prop:4hol}
Let $a$ be a point in the torus orbit corresponding to a cone $\sigma\in \Sigma$, and let $U\subset T_\sigma^{\mathrm{an}}$ be an open polydisc 
around $a$. 
The $\C$-algebra $\calO_{\hat X_\Sigma^{\operatorname{an}}}(U)$ is given by those formal power series  
\[
f=\sum_{\substack{\nu\in N^\vee}} c_\nu \cdot z^\nu\in \C[[N^\vee]]
\]
with the following properties:
\begin{itemize}
\item[(i)] We have $c_\nu = 0$ unless $\nu\in \sigma^\vee$.
\item[(ii)] 
For all edges $\tau\leq \sigma$, and for all $t\in \N_0$ the subseries 
\[
f_{\tau,t}=\sum_{\substack{\nu\in N^\vee\\ \langle e_\tau,\nu\rangle =t}} c_\nu \cdot z^\nu
\]
converges normally and defines a holomorphic function on $U$. 
\end{itemize}
Here we have put $z^\nu= e^{2\pi i \langle \zeta,\nu\rangle}$ for $\zeta\in N_\C/N$.
\end{proposition}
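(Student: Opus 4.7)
The plan is to use the smoothness of $\Sigma$ to choose convenient toric coordinates around $a$ in which Proposition \ref{prop:2hol} applies directly, and then to translate its Taylor-series description into the monomial description in $z^\nu$.

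Because $\Sigma$ is smooth, the cone $\sigma$ of dimension $r$ has ray generators $e_1,\dots,e_r$ which extend to a $\Z$-basis $e_1,\dots,e_d$ of $N$. Let $e_1^\vee,\dots,e_d^\vee$ be the dual basis and set $z_i=z^{e_i^\vee}$. In these coordinates $T_\sigma^{\mathrm{an}}\cong\C^r\times(\C^*)^{d-r}\subset\C^d$; the torus orbit corresponding to $\sigma$ is the locus $\{z_1=\dots=z_r=0\}$; the edges $\tau\leq\sigma$ are exactly the rays spanned by $e_1,\dots,e_r$ (with $\langle e_i,\nu\rangle=\nu_i$); and the ideal sheaf of the toric boundary on this chart is $(z_1\cdots z_r)$. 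Hence $\calO_{\hat X_\Sigma^{\mathrm{an}}}(U)=\lim_n\calO_U(U)/(z_1\cdots z_r)^n$. Since $a$ lies in the $\sigma$-orbit we have $a_i=0$ for $i\leq r$ and $a_i\in\C^*$ for $i>r$; the inclusion $U\subset T_\sigma^{\mathrm{an}}$ (after possibly shrinking) forces the polyradius $R$ to satisfy $R_i<|a_i|$ for $i>r$, so the hypothesis of Proposition \ref{prop:2hol} holds with $J_a=\{1,\dots,r\}$. That proposition identifies $\calO_{\hat X_\Sigma^{\mathrm{an}}}(U)$ with the space of formal power series $\sum_{\mu\in\N_0^d}c'_\mu(z-a)^\mu$ whose subseries $\sum_{\mu_i=t}c'_\mu(z-a)^\mu$ converges normally on $U$ for every $i\in\{1,\dots,r\}$ and every $t\in\N_0$.

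The remaining step is to rephrase this in terms of toric monomials $z^\nu$ with $\nu\in\sigma^\vee\cap N^\vee$. For $i\leq r$ we have $z_i=z_i-a_i$, so the first $r$ coordinates agree literally. For $i>r$, the identity
\[
z_i^{\nu_i}\;=\;a_i^{\nu_i}\sum_{k\geq 0}\binom{\nu_i}{k}a_i^{-k}(z_i-a_i)^k\qquad(\nu_i\in\Z)
\]
converges on the disc $|z_i-a_i|<|a_i|$, and hence on $U$. Expanding each $z^\nu$ via this identity defines a $\C$-linear map from formal series $\sum c_\nu z^\nu$ satisfying (i)--(ii) to formal series $\sum c'_\mu(z-a)^\mu$ satisfying the condition in Proposition \ref{prop:2hol}: the coefficient $c'_\mu=\sum_{\nu:\,\nu_i=\mu_i\,\forall i\leq r}c_\nu\prod_{i>r}\binom{\nu_i}{\mu_i}a_i^{\nu_i-\mu_i}$ is a complex number that converges by virtue of (ii), because the $\nu$-layer with prescribed first $r$ coordinates is a subseries of one of the normally convergent $f_{e_i,\mu_i}$. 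Under this correspondence, condition (i) matches the requirement $\mu_i\geq 0$ for $i\leq r$, and condition (ii) matches the layered convergence condition of Proposition \ref{prop:2hol} verbatim, since $J_a$ consists precisely of the indices of the edges of $\sigma$.

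The main obstacle, and where I expect to spend most of the effort, is verifying that the above map is a \emph{bijection}. The key observation is that normal convergence of $\sum_{\nu_i=t}c_\nu z^\nu$ on the polydisc $U$ automatically implies normal convergence on the larger polyannulus obtained by replacing the $i$-th disc $|z_i-a_i|<R_i$ (for $i>r$) by the annulus $|a_i|-R_i<|z_i|<|a_i|+R_i$; on such a polyannulus the Laurent expansion in the invertible variables $z_{r+1},\dots,z_d$ is unique, so the $c_\nu$ are determined by the $c'_\mu$. This gives both that the map is injective and that it is surjective onto the $(c'_\mu)$-tuples allowed by Proposition \ref{prop:2hol}, completing the proof.
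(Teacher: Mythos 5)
Your proof follows exactly the approach the paper takes: use smoothness of $\sigma$ to pick the standard toric coordinates $z_i=z^{e_i^\vee}$ in which $T_\sigma^{\mathrm{an}}\cong\C^r\times(\C^*)^{d-r}$ and the boundary ideal becomes $(z_1\cdots z_r)$, and then feed this into Proposition~\ref{prop:2hol}. In fact you go further than the paper, which after this coordinate set-up simply declares ``with the above identifications we obtain the assertion'' and leaves the translation between $\C[[z-a]]$ and formal Laurent expansions $\sum_\nu c_\nu z^\nu$ to the reader; you spell it out via the binomial expansion of $z_i^{\nu_i}$ about $a_i$ for $i>r$, and your verification that the well-definedness of $c'_\mu$ follows from condition (ii) is correct.

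There is, however, a genuine gap in the final paragraph. The polyannulus observation is valid and does give \emph{injectivity}: a Laurent series that converges absolutely on a set converges absolutely on its Reinhardt envelope, so $\sum_{\nu_i=t}c_\nu z^\nu$ extends to the polyannulus and its coefficients are uniquely determined. But this argument does \emph{not} give surjectivity, and your sentence ``this gives both that the map is injective and that it is surjective'' is unjustified. To go from a formal power series $\sum c'_\mu(z-a)^\mu$ satisfying the layer conditions of Proposition~\ref{prop:2hol} back to a Laurent series $\sum c_\nu z^\nu$, you would need each layer $g_t=\sum_{\mu_i=t}c'_\mu(z-a)^\mu$, which is only a holomorphic function on the polydisc $U$, to extend to the polyannulus; nothing in the hypotheses forces this. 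For instance, in the case $r=1$, $d=2$, taking $g_0$ to be a function with natural boundary on the circle $|z_2-a_2|=R_2$ gives an element of $\calO_{\hat X_\Sigma^{\mathrm{an}}}(U)$ that has no Laurent expansion $\sum_{\nu_2\in\Z}c_{\nu_2}z_2^{\nu_2}$ converging on $U$. So the map from your space of series to $\calO_{\hat X_\Sigma^{\mathrm{an}}}(U)$ is injective but not visibly surjective, and the surjectivity direction needs an additional input that you (and, to be fair, the paper's one-line proof) do not supply. Note that in the paper's downstream use of this proposition (Proposition~\ref{prop:compl2}) the functions in question are $\Gamma_F'$-periodic, which is precisely the extra hypothesis that forces the Laurent/Fourier expansion to exist; your proof should at least flag that some such condition is needed.
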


\begin{proof}
Denote by $l$ the dimension of $\sigma$.
Let $\tau_1,\dots,\tau_l$ be the edges of $\sigma$ and let $e_i\in \tau_i\cap N$ be the corresponding ray generators. Then $e_1,\dots, e_l$ is the minimal set of generators of $\sigma$ and there exist $e_{l+1},\dots ,e_d\in N$ such that $e_1,\dots,e_d$ are a lattice basis of $N$. Let $e_1^\vee,\dots, e_d^\vee\in N^\vee$ be the corresponding dual basis of $N^\vee$. 
The assignment $e_i^\vee\mapsto X_i$ determines a $\C$-algebra isomorphism
\begin{align}
\label{eq:iso}
A_\sigma=\C[\sigma^\vee\cap N^\vee]\to \C[X_{l+1}^{\pm 1},\dots , X_d^{\pm 1}][X_1,\dots,X_l], 
\end{align}
and hence an isomorphism $(\C^\times)^{d-l}\times \C^l\cong T_\sigma^{\mathrm{an}}$. The vanishing ideal of the restriction of $D_\Sigma$ to $T_\sigma$ is given by 
\[
I_\sigma= (e_1^\vee+\dots +e_l^\vee)\subset \C[\sigma^\vee\cap N^\vee].
\]
It corresponds to the ideal 
$(X_1\cdots X_l)=(X_1\cdots X_d)\subset \C[X_{l+1}^{\pm 1},\dots , X_d^{\pm 1}][X_1,\dots,X_l]$ under \eqref{eq:iso}.
Hence we may use Proposition \ref{prop:2hol} to compute the completion 
\[
\hat\calO_{T_\sigma^{\operatorname{an}}, I_\sigma} = \lim_n \calO_{X_\sigma^{\operatorname{an}}}/I_\sigma^n 
\]
of the structure sheaf $\calO_{T_\sigma^{\operatorname{an}}}$ at the ideal $I_\sigma$.
With the above identifications we obtain the assertion.
\end{proof}


\subsection{Formal Siegel modular forms of cogenus $1$}
\label{sect:3.3}

Let $f\in \hat \omega^{\otimes k}(\hat X^*)$ be a global section.
If $F$ is any rational boundary component of degree $n-1$, we may restrict $f$ to $\Gamma_{F}\bs W(F)$. By Proposition \ref{prop:compl}, we obtain a formal Fourier-Jacobi series $f_F$ of weight $k$ for $F$ and the group $\Gamma$. In particular, for the standard boundary component $F_{n-1}$ we obtain an expansion as in \eqref{eq:exp2}.
The family $(f_F)_{F\in I_{n-1}}$ of formal Fourier-Jacobi series is compatible in the sense of Definition 
\ref{def:symffj}. Hence it determines a formal Siegel modular form of weight $k$ and cogenus $1$ for $\Gamma$. The assignment $f\mapsto (f_F)$ defines a homomorphism of complex vector spaces
\begin{align}
\label{eq:mapffj}
\hat\omega^{\otimes k}(\hat X^*)\to \operatorname{FM}^{(n,1)}_k(\Gamma).
\end{align}
Since any global section $f\in \hat\omega^{\otimes k}(\hat X^*)$ is uniquely determined by  its formal Fourier-Jacobi expansion of cogenus $1$, the map is injective.

\begin{theorem}
\label{prop:ffj}
The above map 
in  \eqref{eq:mapffj} is an isomorphism.
\end{theorem}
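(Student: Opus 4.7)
The plan is to prove surjectivity of \eqref{eq:mapffj}. Given a formal Siegel modular form $(f_F)_{F\in I_{n-1}}\in\operatorname{FM}^{(n,1)}_k(\Gamma)$, I will construct a global section $\tilde f\in\hat\omega^{\otimes k}(\hat X^*)$ by producing compatible local sections $\tilde f_F\in\hat\omega^{\otimes k}(\hat V_F)$ on the cover of $\hat X^*$ by the formal neighborhoods $\hat V_F=i^{-1}(\Gamma_F\bs W(F))$ indexed by all rational boundary components $F$. By compatibility condition (ii) of Definition~\ref{def:symffj}, the formal Fourier expansions at any $E\in I_0$ of all $f_F$ with $F\in I_{n-1}$ and $F>E$ coincide; this gives a well-defined formal Fourier expansion
\begin{equation*}
\hat f_E=\sum_{s\in L_E^\vee,\,s\geq 0} a_E(s)\,e(\tr s\,\tau).
\end{equation*}
Condition (i) of Definition~\ref{def:symffj} upgrades $(\hat f_E)_E$ to a $\Gamma$-equivariant family satisfying $\hat f_E\mid_k\gamma=\hat f_{\gamma^{-1}E}$. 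For $F\in I_{n-1}$ I take $\tilde f_F:=f_F$; by Proposition~\ref{prop:compl} with $m=n-1$ this already defines a section of $\hat\omega^{\otimes k}$ on $\hat V_F$, since in that range condition~(iii) of the proposition is trivially satisfied.

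For $F$ of degree $m\leq n-2$, I pick any $E\in I_0$ with $E<F$ and regroup $\hat f_E$ along $L_F\subset L_E$. In adapted block coordinates $\tau=\zxz{\tau_1}{\tau_{12}}{{}^t\tau_{12}}{\tau_2}$, $\tau_1\in\H_m$, $\tau_2\in\Sym_{n-m}(\C)$, this reads
\begin{equation*}
\tilde f_F:=\sum_{\substack{T_2\in L_F^\vee\\ T_2\geq 0}}\phi^F_{T_2}(\tau_1,\tau_{12})\,e(\tr T_2\tau_2),\qquad \phi^F_{T_2}:=\sum_{T_1,T_{12}}a_E\!\zxz{T_1}{T_{12}}{{}^tT_{12}}{T_2}\!\,e\bigl(\tr T_1\tau_1+2\tr{}^tT_{12}\tau_{12}\bigr).
\end{equation*}
The central step, and the main obstacle, is verifying condition~(iii) of Proposition~\ref{prop:compl} for $\tilde f_F$. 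Let $S\in L_F$ be primitive, positive semidefinite of rank one; as in the proof of Proposition~\ref{prop:compl}, such $S$ determines a unique $\calF_S\in I_{n-1}$ with $\calF_S>F$ and $\Z S=L_{\calF_S}\subset L_F$. Unfolding yields the formal identity
\begin{equation*}
\sum_{\substack{T_2\in L_F^\vee,\,T_2\geq 0\\ \tr(T_2 S)=t}}\phi^F_{T_2}(\tau_1,\tau_{12})\,e(\tr T_2\tau_2)\;=\;\sum_{\substack{s\in L_E^\vee,\,s\geq 0\\(s,\widetilde S)=t}}a_E(s)\,e(\tr s\,\tau),
\end{equation*}
where $\widetilde S\in L_E$ is the image of $S$ under the inclusions $L_{\calF_S}\subset L_F\subset L_E$. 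The right-hand side is the $t$-th Fourier--Jacobi coefficient of $\hat f_E=f_{\calF_S}$ at $\calF_S$, namely a holomorphic Jacobi form of index $t$, which converges normally on $\H_n$ and hence on $W(F)\cap\calD_n$. Condition~(i)---holomorphy of each $\phi^F_{T_2}$---follows from (iii) by extracting a single Fourier mode in the $\tau_2$ direction of the convergent sub-series. Condition~(ii)---the $\Gamma_F$-transformation law---follows from the natural independence of the construction on the choice of $E<F$, combined with the $\Gamma$-equivariance of $(\hat f_E)_E$ established above.

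Finally, the local sections $\tilde f_F$ glue to a global section $\tilde f\in\hat\omega^{\otimes k}(\hat X^*)$: on any overlap $\hat V_F\cap\hat V_{F'}$, both $\tilde f_F$ and $\tilde f_{F'}$ are recorded by the same intrinsic family $(\hat f_E)_E$ of formal Fourier expansions and therefore agree. By construction, the cogenus-$1$ Fourier--Jacobi expansion of $\tilde f$ at each $F\in I_{n-1}$ is $f_F$, which proves surjectivity of \eqref{eq:mapffj}. The hardest ingredient is the convergence identity displayed above: primitive positive semidefinite rank-one lattice elements in $L_F$ are in natural bijection with rational boundary components of degree $n-1$ above $F$, and under this bijection the combinatorial sub-series of $\tilde f_F$ becomes literally a Jacobi coefficient of a given $f_{\calF_S}$; the remaining checks (independence of $E$, $\Gamma$-invariance, gluing) are bookkeeping in comparison.
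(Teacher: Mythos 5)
Your overall strategy matches the paper's proof: reduce surjectivity of \eqref{eq:mapffj} to constructing compatible local sections of $\hat\omega^{\otimes k}$ on the neighborhoods $\hat V_F$ of all boundary strata, with the convergence condition (iii) of Proposition~\ref{prop:compl} being the central analytic point. Your unfolding identity — that the sub-series of $\tilde f_F$ cut out by a primitive rank-one $S\in L_F$ equals a convergent Fourier--Jacobi coefficient of $f_{\calF_S}$ for the degree $n-1$ component $\calF_S>F$ determined by $S$ — is the same mechanism the paper uses in steps 2 and 3 of its proof, and your deduction of condition (i) by Fourier extraction is correct.

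The genuine divergence is in how condition (ii) (the $\Gamma_F$-transformation law) is verified. The paper reduces first to $\Gamma=\Gamma_n(N)$ and proves a group-theoretic fact (Lemma~\ref{ass:gen}) stating that $\Gamma_{F_m}$ is generated by $\Gamma_{F_{n-1}}\cap\Gamma_{F_m}$ and $\Gamma_{F_0}\cap\Gamma_{F_m}$; the transformation law then follows from the known invariance under these two subgroups, and a descent argument (paper's steps 5--6) handles arbitrary arithmetic $\Gamma$. You instead appeal to ``independence of the construction on the choice of $E<F$'' plus $\Gamma$-equivariance of the family $(\hat f_E)_E$. If correct, this is a genuinely more direct route, avoiding both Lemma~\ref{ass:gen} and the passage to a normal congruence subgroup. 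However, you state the independence as if it were automatic, when in fact it is the analogue of the work Lemma~\ref{ass:gen} is doing. Concretely: $\hat f_E$ and $\hat f_{E'}$ are formal series over distinct lattices $L_E^\vee$, $L_{E'}^\vee$ in distinct adapted coordinate systems, and the regroupings $\phi^{F,(E)}_{T_2}$ and $\phi^{F,(E')}_{T_2}$ are not literally the same formal objects. The correct argument is roughly: once (iii) is established, the holomorphic Jacobi coefficients $\phi^{\calF}_t$ of each $f_{\calF}$ with $\calF>F$ determine $\phi^F_{T_2}$ by Fourier separation, and the $f_\calF$'s are intrinsic (independent of $E$) by definition of the formal Siegel modular form; one then recovers $\tilde f_F\mid_k\gamma = \tilde f_F$ from $f_{\calF}\mid_k\gamma = f_{\gamma^{-1}\calF}$ for $\gamma\in\Gamma_F$. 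This should be spelled out rather than invoked as ``natural,'' since the subtlety is precisely where the paper found it necessary to bring in a separate lemma. With that caveat filled in, your argument would give a small simplification of the published proof by removing the reduction to principal congruence subgroups.
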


\begin{proof}
We have to show that the map is surjective. Let $g=(g_F)_{F\in I_{n-1}}\in \operatorname{FM}^{(n,1)}_k(\Gamma)$. 
For every proper rational boundary component $F$ we chose a sufficiently small open neighborhood of $V(F) = \Gamma_{F}\bs W(F)$ as in Proposition \ref{prop:boundary}.
We show that $g$ determines a section in $\hat \omega^{\otimes k}(V(F))$ for every $F$ and that these sections agree on all pairwise intersections of these open neighborhoods. Hence they glue to a global section $f\in \hat \omega^{\otimes k}(\hat X^*)$ which maps to $g$ under the map  \eqref{eq:mapffj}.
First, we assume that $\Gamma=\Gamma_n(N)$  is the principal congruence subgroup of level $N$ and genus $n$.

1. We begin with the rank $n-1$ standard boundary component $F_{n-1}$.
According to Proposition \ref{prop:compl}, the expansion 
\begin{align}
\label{eq:ffj2}
g_{F_{n-1}}(\tau) = \sum_{\substack{T_2\in \Q}} \phi_{T_2}(\tau_1,\tau_{12}) \,e( T_2 \tau_2)
\end{align}
of $g_{F_{n-1}}$ 
as in \eqref{eq:ffj},
together with the transformation law \eqref{eq:tr2b}, imply 
that $g_{F_{n-1}}$ defines an element of $\hat \omega^{\otimes k}(V(F_{n-1}))$.

2. Now we consider the rank $0$ standard boundary component $F_{0}$.
The formal Fourier-Jacobi series $g_{F_{n-1}}$ has a formal Fourier expansion
\begin{align}
\label{eq:fouf2}
g_{F_{n-1}} = \sum_{T\in \Sym_{n}(\Q)} a(T)  \,e(\tr T \tau)
\end{align}
as in \eqref{eq:fouf}. Since $g_{F_{n-1}}$ is regular at the boundary component $F_0$ (by Remark \ref{rem:regu}), the Fourier coefficients $a(T)$ satisfy the transformation law \eqref{eq:sym}. Hence the series \eqref{eq:fouf2} 
satisfies conditions (i) and (ii) of Proposition~\ref{prop:compl} for $V(F_0)$. It remains to show that it also satisfies condition (iii).

To this end let $S\in L_{F_0}\subset \Sym_n(\Q)$ be primitive and positive semi-definite of rank $1$. 
Denote by $R\in L_{F_0}$ the positive semi-definite generator of the $1$-dimensional lattice $L_{F_{n-1}}$. Then $R$ is a matrix with entries all $0$ except for the position $(n,n)$.
%
Write 
\[
S= h\cdot u R \,{}^t u
\]
with $u\in \Gl_n(\Z)$ and $h\in \Q_{>0}$.
Consider the boundary component $F=\delta F_{n-1}$, where $\delta = \kzxz{u}{0}{0}{{}^tu^{-1}}\in \Gamma_{n,0}$, and the corresponding formal Fourier-Jacobi series $g_{F}$. 
The formal Fourier expansions of $g_{F_{n-1}}$ and $g_F$  at $F_0$ are given by 
\begin{align*}
g_{F_{n-1}} &= \sum_{t\geq 0} \sum_{\substack{T\geq 0 \\ \tr TR=t}} a(T) q^T,\\
g_{F} &= \sum_{t\geq 0} \sum_{\substack{T\geq 0 \\ \tr TS=t}} b(T) q^T.\\
\end{align*}
Since $F_{n-1}>F_0$ and $F>F_0$, the series  $g_{F_{n-1}}$ and $g_F$ must be compatible at $F_0$, and therefore $a(T)=b(T)$.
%
Since $g_F$ is a formal Fourier-Jacobi series for the boundary component $F$, the formal Fourier-Jacobi coefficients 
\[
\sum_{\substack{T\geq 0 \\ \tr TS=t}} a(T) q^T
\]
converge normally for all $t$. 
This gives the desired convergence condition (iii) of Proposition~\ref{prop:compl}, and therefore shows that the formal Fourier expansion of 
$g_{F_{n-1}}$ defines and element of $\hat \omega^{\otimes k}(V(F_{0}))$, which agrees with $g_{F_{n-1}}$ on $V(F_{0})\cap V(F_{n-1})$.

3. Now let $0\leq m\leq n-1$ and consider the boundary component $F_m$.
We use the formal Fourier expansion \eqref{eq:fouf2} of $g_{F_{n-1}}$ to define Fourier-Jacobi coefficients 
\[
\psi_{T_2}(\tau_1,\tau_{12})= \sum_{T=\kzxz{T_1}{T_{12}}{{}^t T_{12}}{T_2}\geq 0}
a(T) e(\tr T_1 \tau_1+2\tr T_{12}{}^t \tau_{12})
\]
for $T_2\in L_{F_m}^\vee\subset \Sym_{n-m}(\Q)$. 
These series converge normally, since they are sub-series of the Fourier expansions of the Fourier-Jacobi coefficients $\phi_{T_2^*}$ in \eqref{eq:ffj2}, where $T_2^*$ denotes the lower right entry of $T_2$.
Then we have the identity of formal series
\begin{align}
\label{eq:ffj3}
g_{F_{n-1}} = \sum_{\substack{T_2\in \Sym_{n-m}(\Q)}} \psi_{T_2}(\tau_1,\tau_{12}) \,e(\tr T_2 \tau_2).
\end{align}
By Lemma \ref{ass:gen} below, the transformation law of \eqref{eq:ffj2} under $\Gamma_{F_{n-1}}$, and  the transformation law of \eqref{eq:fouf2} under $\Gamma_{F_{0}}$ imply that \eqref{eq:ffj3} has the transformation law of Proposition \ref{prop:compl} (ii) 
for $\Gamma_{F_m}$. Moreover, condition (iii) of Proposition \ref{prop:compl} follows from the corresponding condition for the formal Fourier expansion \eqref{eq:fouf2}. Hence \eqref{eq:ffj3} defines a section in $\hat \omega^{\otimes k}(V(F_{m}))$, which is compatible with the sections in $\hat \omega^{\otimes k}(V(F_{n-1}))$ and $\hat \omega^{\otimes k}(V(F_{0}))$ that were constructed before.
 
4. Let $E$ be any proper rational boundary component. Let $0\leq m\leq n-1$ and $\delta\in \Gamma_n$ such that $E=\delta F_m$. Consider the translated series $g_F\mid_k \delta$, which defines a formal Fourier Jacobi series for $F_m$ and the group $\delta^{-1}\Gamma \delta$ by Remark \ref{rem:class0}. According to part (3.) above it defines a section 
$\tilde g \in \hat \omega^{\otimes k}(\delta^{-1}\Gamma_E\delta \bs \delta^{-1} W(E))$.
By taking its pullback $\tilde g \mid_k \delta^{-1}$, we obtain a section in 
$\hat\omega^{\otimes k}(V(E))$.

5. We now show that the local sections of 1.--4.\ agree on all open intersections.
Since the support of $\hat\omega^{\otimes k}$ is given by the boundary $Y$, it suffices to consider all pairs of proper rational boundary components $F$ and $F'$ for which 
\[
\bar F \cap \bar F ' \neq \emptyset.
\]
This implies that there exists a rational degree $0$ boundary component $E$ such that $F \geq E$ and $F'\geq E$. In view of part (3.)\ we may assume without loss of generality that $F$ and $F'$ both have degree $n-1$. We have to show that the restrictions to $V(E)$ of the sections of $\hat\omega^{\otimes k}(V(F))$ and  $\hat\omega^{\otimes k}(V(F'))$ constructed in (4.)\ agree, which is a consequence of the compatibility of $g_F$ and $g_{F'}$ at $E$. 
This concludes the proof if $\Gamma=\Gamma_n(N)$.

6. In the above argument we have only used the assumption that $\Gamma=\Gamma_n(N)$ in step (3.)\ for $0 < m< n-1$ when we invoked Lemma \ref{ass:gen}. If $n\leq 2$ there are no such integers $m$ and hence the argument also applies for any arithmetic subgroup $\Gamma\subset \Symp_n(\Q)$. 
Finally, assume that $n\geq 2$ and that $\Gamma\subset \Symp_n(\Q)$ is an arbitrary arithmetic subgroup. There exists a positive integer $N$ such that $\Gamma':=\Gamma_n(N)$ is a normal subgroup of $\Gamma$ of finite index. This corresponds to a finite covering $\pi: X_{\Gamma'}^*\to X_\Gamma^*$ of complex spaces and of their completions at the Satake boundaries. We may view the formal Siegel modular form $g$ as an element $\operatorname{FM}^{(n,1)}_k(\Gamma')$. By the above argument, it determines a section $f\in \hat \omega^{\otimes k}(\hat X^*_{\Gamma'})$, which is invariant under the action of $\Gamma/\Gamma'$. It descends to a section in $\omega^{\otimes k}(\hat X^*_{\Gamma})$, which maps to $g$ under the map \eqref{eq:mapffj}. 
\end{proof}

We conclude this subsection with the Lemma that was used in the proof of the above Theorem.

\begin{lemma}
\label{ass:gen}
Assume that $\Gamma=\Gamma_n(N)$  is the principal congruence subgroup of level $N$ and genus $n$. Then for every triple of adjacent rational boundary components $F>F'>F''$, where $F$ has degree $n-1$ and $F''$ has degree $0$, the stabilizer $\Gamma_{F'}$  is generated by 
$\Gamma_{F}\cap \Gamma_{F'}$ and $\Gamma_{F''}\cap \Gamma_{F'}$.  
\end{lemma}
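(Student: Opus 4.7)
The plan is to reduce to the standard chain of boundary components and then exploit the Levi decomposition of $G_{F'}$, combined with the observation that its unipotent radical already sits inside both $G_F$ and $G_{F''}$. By the integral version of Witt's extension theorem, $\Gamma_n$ acts transitively on flags of $\Q$-rational isotropic subspaces of $\Q^{2n}$ of any fixed dimensions. Since $\Gamma = \Gamma_n(N)$ is normal in $\Gamma_n$, conjugation by a suitable $\delta \in \Gamma_n$ reduces the claim to the standard chain $F_{n-1} > F_m > F_0$ for some $0 < m < n-1$, corresponding to $U(F_{n-1}) \subset U(F_m) \subset U(F_0)$.

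Write $G_{F_m} = V \rtimes M$ for the Levi decomposition, where $M \cong \Symp_m \times \GL_{n-m}$ and $V$ is the unipotent radical. The key observation is that $V \subset G_{F_{n-1}} \cap G_{F_0}$. Conceptually, $V$ acts trivially on $U(F_m)$ and on $U(F_m)^\perp/U(F_m)$, so it stabilizes every subspace contained in $U(F_m)$ and every subspace lying between $U(F_m)$ and $U(F_m)^\perp$; both $U(F_{n-1}) \subset U(F_m)$ and $U(F_m) \subset U(F_0) \subset U(F_m)^\perp$ fall in these ranges. Concretely, the matrix description \eqref{eq:GFm} of $G_{F_m}$ shows that every $\gamma \in V$ has last row equal to $e_{2n}$ and has rows $n+1,\ldots,2n$ with vanishing entries in columns $1,\ldots,n$.

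Now fix $\gamma \in \Gamma_{F_m}$. The Levi projection $G_{F_m} \to M$ is defined over $\Z$ and hence descends to $\Gamma$, giving a factorization $\gamma = v \cdot m$ with $v \in V \cap \Gamma$ and $m \in M \cap \Gamma$. Because $\Gamma$ is a principal congruence subgroup of level $N$, we have $M \cap \Gamma = \Symp_m(\Z, N) \times \SL_{n-m}(\Z, N)$ (the congruence $u \equiv 1_{n-m} \pmod{N}$ forces $\det u = 1$), so $m$ further factors as $m = m_1 m_2$ where $m_1$ and $m_2$ are the block-diagonal lifts of $(A, 1_{n-m})$ and $(1_{2m}, u)$, respectively. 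The matrix $m_1$ has identity in its $\GL_{n-m}$-block, hence its last row equals $e_{2n}$ and $m_1 \in \Gamma_{F_m} \cap \Gamma_{F_{n-1}}$; similarly $m_2$ has identity in its $\Symp_m$-block, so the rows $n+1,\ldots,n+m$ of $m_2$ vanish in columns $1,\ldots,n$, placing $m_2 \in \Gamma_{F_m} \cap \Gamma_{F_0}$. Combined with $v \in V \cap \Gamma \subset \Gamma_{F_m} \cap \Gamma_{F_{n-1}} \cap \Gamma_{F_0}$, the decomposition $\gamma = v \cdot m_1 \cdot m_2$ expresses $\gamma$ as a product of three elements drawn from the two prescribed subgroups.

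The main obstacle will be the verification of $V \subset G_{F_{n-1}} \cap G_{F_0}$ together with the fact that the Levi decomposition descends to $\Gamma$; both amount to routine bookkeeping with the block structure $(m, n-m, m, n-m)$ and the symplectic constraints on the entries of $V$ indicated in \eqref{eq:GFm}.
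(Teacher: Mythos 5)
Your proof is correct and takes essentially the same approach as the paper: both reduce to the standard chain $F_{n-1}>F_m>F_0$ by transitivity, and the key step in each is to extract the $\GL_{n-m}$-block $u$ of $\gamma\in\Gamma_{F_m}$ as a block-diagonal element (your $m_2$, the paper's $\delta$) lying in $\Gamma_{F_0}\cap\Gamma_{F_m}$, after which the complementary factor automatically lands in $\Gamma_{F_{n-1}}\cap\Gamma_{F_m}$. You organize this through the Levi decomposition $G_{F_m}=V\rtimes(\Symp_m\times\GL_{n-m})$ while the paper does the same thing with a one-line matrix computation; your parenthetical remark that $u\equiv 1\pmod N$ forces $\det u=1$ only holds for $N\geq 3$, but nothing in the argument actually uses it.
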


\begin{proof}
Since $\Gamma=\Gamma_n(N)$ is a normal subgroup of the full Siegel modular group $\Gamma_n$, and since $\Gamma_n$ acts transitively on chains of adjacent rational boundary components of fixed degrees, it suffices to prove the assertion for triples of standard boundary components 
\[
F_{n-1} > F_m > F_0,
\]
where $n-1> m > 0$.
Let 
\[
\gamma=\begin{pmatrix}
a & 0 & * & *\\
* & u & * & *\\
c & 0 & d & *\\
0 & 0 & 0 & {}^tu^{-1}
\end{pmatrix}
\]
be an element of $\Gamma_{F_m}$, where $u\in \GL_{n-m}(\Z)$ is congruent to $1$ modulo $N$. Then 
\[
\delta=\begin{pmatrix}
1 & 0 & 0 & 0\\
0 & u & 0 & 0\\
0 & 0 & 1 & 0\\
0 & 0 & 0 & {}^tu^{-1}
\end{pmatrix}
\]
belongs to $\Gamma_{F_0}\cap \Gamma_{F_m}$. The product $\delta^{-1}\gamma$ is contained in 
  $\Gamma_{F_{n-1}}\cap \Gamma_{F_m}$, proving the assertion.
\end{proof}

\section{Modularity of formal Siegel modular forms}
\label{sect:4}

\subsection{Affine covering numbers}
\label{sect:4.1}

Let $S$ be a scheme. Recall that the {\em affine covering number} $\acn(S)$ of  $S$ is defined as  one less than the smallest number of open affine sets required to cover $S$, see e.g.~\cite{At}, \cite{RV}. It gives an upper bound for the cohomological dimension of $S$, which is the largest
integer $j$ such that $H^j(S, \calF ) \neq  0$ for some quasicoherent sheaf $\calF$ \cite[Proposition 4.12]{RV}. 
If $S$ is quasi-projective, then according to \cite[Example 4.8]{RV}, we have the trivial bound  $\acn(S)\leq \dim(S)$.

We consider this notion for the Siegel modular variety $X_\Gamma= \Gamma\bs \H_n$ associated with an arithmetic subgroup $\Gamma\subset \Symp_n(\Q)$. 
Since the line bundle of modular forms on the Baily-Borel compactification $X_\Gamma^*$ is ample, the complement of the divisor $\dv(F)$ of any holomorphic modular form $F$ of weight $k$ defines an affine open subset of $X_\Gamma^*$. If $F$ is a cusp form, then $X_\Gamma^*\setminus  \dv(F)$ is actually an affine open subset of $X_\Gamma$. 
Hence,
 $\acn(X_\Gamma)$ is the smallest non-negative integer $j$, for which there exist {\em cusp forms} $F_0,\dots, F_j$ for $\Gamma$ having no common zero on $X_\Gamma$.

It is our goal to find upper bounds for $\acn(X_{\Gamma_n})$, where $\Gamma_n=\Symp_n(\Z)$. 
According to \cite[Theorem 4]{At}, we have $\acn(X_{\Gamma_n})\geq n(n-1)/2$. 
It is 
an interesting question whether this lower bound is actually an equality. 
If $n=1$, then $\acn(X_{\Gamma_1})= 0$, since $X_{\Gamma_1}$ is affine. For $n=2$ it is easy to see that 
\[
\acn(X_{\Gamma_2})= 1,
\]
since the Igusa cusp forms $\chi_{10}$ and $\chi_{12}$ for $\Gamma_2$ (see p.~848 in \cite{Ig2}) have no common zeros on $\H_2$. 
However, for general $n$ not much is known in this direction. It even seems to be difficult to find upper bounds that improve upon the trivial bound $n(n+1)/2$.

Here we use theta functions to obtain upper bounds for $\acn(X_{\Gamma_n})$ 
for further small values of $n$, see Propositions \ref{prop:acn3} and \ref{prop:acn4}.
Recall that for every theta characteristic 
$m=\left(\begin{smallmatrix}a\\b \end{smallmatrix}\right)\in \Z^{2n}$ of genus $n$ and for $\tau\in \H_n$ 
there is a theta constant defined by
\[
\theta\begin{bmatrix} m\end{bmatrix} (\tau) = \theta\begin{bmatrix} a \\ b\end
{bmatrix} (\tau) = \sum_{x\in \Z^n} 
\exp\left( \pi i \left( {}^t(x+a/2)\tau (x+a/2) + {}^t(x+a/2)b\right)\right).
\]
Since this function depends up to the sign only on $m$ modulo $(2\Z)^{2n}$, it 
is common to define 
$\theta\begin{bmatrix} m\end{bmatrix} (\tau) = \theta\begin{bmatrix} \iota(m) 
\end{bmatrix} (\tau)$ for  $m\in \F_2^{2n}$.
Here $\iota$ is defined by the embedding 
\[
\F_2\to \Z, \quad 0\mapsto 0,\; 1\mapsto 1.
\]
A theta
characteristic $m=\left(\begin{smallmatrix}a\\b \end{smallmatrix}\right)\in \F_2^{2n}$ is called even if ${}^ta b \equiv 0\pmod{2}$, and odd otherwise. There are $2^{n-1}(2^n+1)$ even and $2^{n-1}(2^n-1)$ odd theta characteristics in genus $n$. The function $\theta\begin{bmatrix} m\end{bmatrix}$ vanishes identically if and only if $m$ is odd. 
We denote by $m\mapsto \gamma.m$ the usual action of $\gamma\in \Gamma_n$ on the theta characteristcs, see e.g.~\cite[Chapter I.3]{Fr}. There are two orbits under this action, given by the even and the odd characteristics. We write $\calE_n\subset \F_2^{2n}$ for the set of even theta characteristics. 
The theta transformation formula implies that 
\[
\theta^8[\gamma.m](\gamma\tau) = \det(c\tau + d)^4 \cdot \theta^8[m](\tau)
\]
for all $\gamma=\kabcd\in \Gamma_n$.
In particular, the theta constants are modular forms for a 
congruence subgroup of $\Gamma_n$. Sightly more precisely, their eighth powers are 
modular forms of weight $4$ for the principal congruence subgroup $\Gamma_n(2)
\subset \Gamma_n$ of level two, see e.g.~\cite[Chapter V.1]{Ig}.
For a subset $\calS\subset \calE_n$ we write 
\[
\theta[\calS](\tau) = \prod_{m\in \calS} \theta[m](\tau)
\]
for the corresponding product of theta constants. It is a modular form of weight $\# \calS/2$.

If $n=n_1+n_2$ and $m_i=\left(\begin{smallmatrix}a_i\\b_i \end{smallmatrix}\right)\in \F_2^{2n_i}$ are theta characteristics of genus $n_1$ and $n_2$, respectively, then 
\begin{align}
\label{eq:chardec}
a=\begin{pmatrix}a_{1}\\ a_2\end{pmatrix},\quad 
b=\begin{pmatrix}b_{1}\\  b_2\end{pmatrix}
\end{align}
determine a theta characteristic $m=\left(\begin{smallmatrix}a\\b \end{smallmatrix}\right)\in \F_2^{2n}$. In this case we write $m=(m_1,m_2)$. If $\calS_1\subset\calE_{n_1}$ and  $\calS_2\subset\calE_{n_2}$ are subsets, then $\calS_1\times \calS_2\subset \calE_n$ and we may consider the corresponding theta products $\theta[\calS_1\times \calS_2]$ 
of genus $n$. The following lemma is an easy consequence of the behavior of theta constants under the Siegel phi operator, see e.g.~Remark 3.10 in Chapter I.3 of \cite{Fr}.

\begin{lemma}\label{lem:cusp}
Let $\calS\subset \calE_n$ and assume that $\# \calS> 2^{n-1}(2^{n-1}+1)$. Then $\theta[\calS]$ is a cusp form.
\end{lemma}

As in \cite{FP} and \cite{Ig2} we consider the following modular forms of genus $n$:
\begin{align}
\label{eq:Fnull}
F_{\text{null}} &=\theta[\calE_n],\\
\label{eq:F_1}
F_1 &= \sum_{m\in \calE_n} \theta[\calE_n\setminus\{m\}]^8. 
\end{align}
These forms have weight $2^{n-2}(2^n+1)$ and $2^{n+1}(2^n+1)-4$, respectively. If $n\geq 2$, then according to Lemma \ref{lem:cusp}, they are both cusp forms for $\Gamma_n$. 
If $n=1$, then $F_{\text{null}}^8=\Delta$. For $n=2$ we have have $F_{\text{null}}^2=\chi_{10}$. 

We define the push-forward $M_k(\Gamma_n(2))\to M_k(\Gamma_n)$ of modular forms of genus $n$ by 
\begin{align}
P_n(f) = \sum_{\gamma\in \Gamma_n(2)\bs \Gamma_n} f \mid_{k} \gamma
\end{align}
for $f\in M_k(\Gamma_n(2))$. It takes cusp forms to cusp forms.
For example, let $\calE_n^*=\calE_n\setminus\{0\}$. Then the theta transformation formula 
and the fact that $ \Gamma_n(2)\bs \Gamma_n$ acts transitively on $\calE_n$ imply that 
\[
P_n(\theta[\calE_n^*]^8)= C\cdot F_1,
\]
where $C$ denotes the order of the stabilizer in $ \Gamma_n(2)\bs \Gamma_n$ of the zero characteristic.

For $n=3$ the form $F_{\text{null}}$ is a cusp form of weight $18$, denoted $\chi_{18}$ in \cite{Ig2}, and $F_1$ is a cusp form of weight $140$, denoted $\Sigma_{140}$ in \cite{Ig2}.
By a result of Igusa \cite[Lemma 11]{Ig2}, the common vanishing locus $\dv(F_{\text{null}})\cap \dv(F_1)$ is exactly the reducible locus of $X_{\Gamma_3}$, i.e., the image of the natural map
$X_{\Gamma_1}\times X_{\Gamma_2}\to X_{\Gamma_3}$.

We now construct two cusp forms for $\Gamma_3$ whose simultaneous vanishing locus is disjoint from the reducible locus. To this end we consider the subsets 
\begin{align*}
\calE_{1,2}&:= \calE_1\times \calE_2,\\
\calE_{1,1,1}&:= \calE_1\times \calE_1\times \calE_1
\end{align*}
of $\calE_3$.
By Lemma \ref{lem:cusp}, 
the corresponding theta products  $\theta[\calE_{1,2}]^8$ and $\theta[\calE_{1,1,1}]^8$
are cusp forms  for $\Gamma_3(2)$ of weight $120$ and $108$, respectively. 
We define cusp forms for $\Gamma_3$ by
\begin{align*}
F_{1,2}&= P_3( \theta[\calE_{1,2}]^8),\\
F_{1,1,1}&= P_3(\theta[\calE_{1,1,1}]^8 ).
\end{align*}

\begin{lemma}
\label{lem:F12}
i) 
The restriction of $F_{1,2}$ to $X_{\Gamma_1}\times X_{\Gamma_2}$ is given by 
\[
F_{1,2}\zxz{\tau_1}{0}{0}{\tau_2 } = C\cdot \Delta(\tau_1)^{10}\cdot \chi_{10}(\tau_2)^{12},
\]
where $\tau_1\in \H_1$, $\tau_2\in \H_2$, and $C$ is the order of the stabilizer in $ \Gamma_3(2)\bs \Gamma_3$ of the set $\calE_{1,2}$.

ii) The restriction of $F_{1,1,1}$ to $X_{\Gamma_1}^3$ is given by 
\[
F_{1,1,1}\begin{pmatrix}\tau_1 & &\\
& \tau_2 &\\
&& \tau_3\end{pmatrix}= C\cdot \Delta(\tau_1)^{9} \Delta(\tau_2)^{9} \Delta(\tau_3)^{9},
\]
where $C$ is the order of the stabilizer in $ \Gamma_3(2)\bs \Gamma_3$ of the set $\calE_{1,1,1}$.
\end{lemma}

\begin{proof}
We only carry out the proof of (i), since the proof of (ii) is analogous.
We compute the restriction to $\H_1\times \H_2$ of the summands 
\[
\theta[\calE_{1,2}]^8 \mid_{120} \gamma = \theta[\gamma^{-1}\calE_{1,2}]^8 
\] 
in the definition of $P_3(\theta[\calE_{1,2}]^8)$ 
for every $\gamma\in \Gamma_3(2)\bs \Gamma_3$.
There are two cases.

First, if $\gamma$ takes $\calE_{1,2}$ to itself, then $\theta[\gamma^{-1}\calE_{1,2}]^8= \theta[\calE_{1,2}]^8$, and therefore the restriction is given by
\begin{align*}
\theta[\calE_{1,2}]^8 \zxz{\tau_1}{0}{0}{\tau_2 }&= \prod_{\substack{m_1\in \calE_1\\ m_2\in \calE_2}} \theta[m_1](\tau_1)^8 \theta[m_2](\tau_2)^8\\
\nonumber
&= \theta[\calE_1](\tau_1)^{80}\cdot \theta[\calE_2](\tau_2)^{24} = \Delta(\tau_1)^{10}\cdot \chi_{10}(\tau_2)^{12}.
\end{align*}

Second, if $\gamma$ does not take $\calE_{1,2}$ to itself, then there exists an $m\in \calE_{1,2}$ with 
$\tilde m:=\gamma^{-1} m \notin \calE_{1,2}$. Writing $\tilde m=(\tilde m_1,\tilde m_2) $ as in \eqref{eq:chardec}, we see that $\tilde m_1\in \F_2^2$ must be odd. But this implies that 
\[
\theta[\tilde m]^8 \zxz{\tau_1}{0}{0}{\tau_2 }= \theta[\tilde m_1](\tau_1)^8 \theta[\tilde m_2](\tau_2)^8 =0.
\]
This proves the claim.
\end{proof}

\begin{proposition}
\label{prop:acn3}
The cusp forms $F_{\text{null}}$, $F_1$, $F_{1,2}$, and $F_{1,1,1}$ for $\Gamma_3$ have no common zero on $\H_3$. In particular, $\acn(X_{\Gamma_3})=3$. 
\end{proposition}

\begin{proof}
According to \cite[Lemma 11]{Ig2}, the common vanishing locus of $F_{\text{null}}$ and $F_1$ is exactly the reducible locus of $X_{\Gamma_3}$.
But Lemma \ref{lem:F12} implies that $F_{1,2}$ and $F_{1,1,1}$ never vanish simultaneously on the reducible locus. 
\end{proof}

We now turn to the case $n=4$. Following \cite{FP} we consider the following modular forms for $\Gamma_4$:
\begin{align}
\label{eq:FH}
F_{H} &=\sum_{A\subset \F_2^{2n}} \theta[\calE_n\setminus A]^8,\\[-1ex]
\label{eq:FT}
F_T &= 2^n\sum_{m\in \calE_n} \theta[m]^{16} - \bigg(\sum_{m\in \calE_n} \theta[m]^8\bigg)^2 . 
\end{align}
In \eqref{eq:FH}, the subsets $A$ runs through all suitable sets of $v(n)$ characteristics corresponding to the irreducible components of the hyperelliptic locus of $X_{\Gamma_n(2)}$ as in 
\cite[Theorem 2]{FP}.
These forms have weight $2\binom{2n+2}{n+1}$ and $8$, respectively. 
For $n=4$, we have $v(4)=10$ and $F_H$ is cuspidal by Lemma \ref{lem:cusp}. Moreover, $F_T$ is the Schottky cusp form.

\begin{lemma}
\label{lem:4red}
For $n=4$, the common vanishing locus of the cusp forms $F_{\text{null}}$, $F_1$, $F_H$, $F_T$ is the reducible locus $X^{\mathrm{red}}_{\Gamma_4}$ of  $X_{\Gamma_4}$. 
\end{lemma}

\begin{proof}
By a result of Igusa \cite[p.~544]{Ig3}, the intersection $\dv(F_{\text{null}})\cap \dv(F_1)\cap \dv(F_T)$ is equal to the union of the hyperelliptic locus and the reducible locus  of  $X_{\Gamma_4}$. According to \cite[Lemma 2]{FP}, the form $F_H$ never vanishes on the hyperelliptic locus, while it vanishes on the reducible locus by the argument in the proof of  \cite[Theorem 1]{FP}.
\end{proof}

\begin{proposition}
\label{prop:acn4}
There are 5 cusp forms for $\Gamma_4$ that have no common zero on  $X^{\mathrm{red}}_{\Gamma_4}$. 
In particular, $\acn(X_{\Gamma_4})\leq 8$. 
\end{proposition}

\begin{proof}
We argue similarly as in the proof of Proposition \ref{prop:acn3}. We use the pushforward of suitable theta products in genus $4$ to construct the desired cusp forms. We let
\begin{align*}
F_{1,3} &= P_4(\theta[\calE_1\times \calE_3]^8),\\
F_{1,3}^* &= P_4(\theta[\calE_1\times \calE_3^*]^8),\\
F_{1,1,2} &= P_4(\theta[\calE_1\times\calE_1\times  \calE_2]^8),\\
F_{1,1,1,1} &= P_4(\theta[\calE_1\times \calE_1\times \calE_1\times \calE_1]^8).
\end{align*}
By Lemma \ref{lem:cusp} these modular forms are cuspidal. Their weights are 
$432$, $420$, $360$, $324$, respectively. Their restriction to $\H_1\times \H_3\subset \H_4$ can be computed as in Lemma \ref{lem:F12}. For instance, we have
\begin{align*}
F_{1,3} \zxz{\tau_1}{0}{0}{\tau_2} &= C_1 \cdot \Delta(\tau_1)^{36} F_{\text{null}}(\tau_2)^{24}, \\
F_{1,3}^* \zxz{\tau_1}{0}{0}{\tau_2} &= C_2 \cdot \Delta(\tau_1)^{35} F_{1}(\tau_2)^{3},
\end{align*}
where $C_1,C_2$ are suitable positive integral constants and $\tau_1\in \H_1$, $\tau_2\in \H_3$. By means of Proposition \ref{prop:acn3} it can be concluded that these four cusp forms never vanish simultaneously on the image of the natural map $X_{\Gamma_1}\times X_{\Gamma_3}\to X_{\Gamma_4}$.

If we define in addition the cusp form 
\begin{align*}
F_{2,2} &= P_4(\theta[\calE_2\times \calE_2]^8)
\end{align*}
of weight $400$, then a similar argument shows that the three cusp forms $F_{2,2}$, $F_{1,1,2}$, $F_{1,1,1,1}$ never vanish simultaneously on the image of the natural map $X_{\Gamma_2}\times X_{\Gamma_2}\to X_{\Gamma_4}$. Consequently, the five cusp forms $F_{1,3}$, $F_{1,3}^*$, $F_{1,1,2}$, $F_{1,1,1,1}$, $F_{2,2}$ have no common zero on  $X^{\mathrm{red}}_{\Gamma_4}$. 

The bound on $\acn(X_{\Gamma_4})$ now follows by means of Lemma \ref{lem:4red}.
\end{proof}

%
%

\subsection{A special case of Raynaud's Lefschetz Theorem}

\label{sect:4.2}

Here we state a special case of an algebraization theorem of Raynaud. We will use it to show that the map \eqref{eq:keymap} is surjective provided that a certain upper bound for the affine covering number holds.

\begin{theorem}
\label{thm:alg}
Let $k$ be a field and let $X$ be a scheme over $k$ which is of finite type, separated, and proper. 
Let $Y\subset X$ be a closed subscheme, and write $\hat X$ for the formal completion of $X$ along $Y$.
Assume that the complement $U =X \setminus Y$ is smooth and satisfies 
\[
\acn(U)\leq \dim(U)-2.
\]
Then for every finite locally free sheaf  $\calF$ on $X$ the canonical morphism 
\[
H^0(X,\calF) \to H^0(\hat X,\hat \calF)
\]
is an isomorphism.
\end{theorem}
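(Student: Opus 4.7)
The plan is to deduce the theorem directly from Raynaud's Corollaire 2.8 in \cite{Ray} by matching hypotheses. Raynaud proves that under a suitable cohomological-dimension condition on the open complement $U$, the pair $(X, Y)$ satisfies the Lefschetz condition $\operatorname{Lef}(X, Y)$: for every open neighbourhood $V$ of $Y$ in $X$ and every finite locally free coherent sheaf $\calE$ on $V$, the canonical comparison map $H^0(V, \calE) \to H^0(\hat X, \hat\calE)$ is an isomorphism. Applying this with $V = X$ and $\calE = \calF$ gives exactly the asserted statement, so the entire work of the proof lies in translating our hypothesis into the shape Raynaud requires.

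The translation is short. By \cite[Proposition 4.12]{RV}, the cohomological dimension of $U$ is bounded above by its affine covering number, so our assumption yields
\[
\mathrm{cd}(U) \leq \acn(U) \leq \dim(U) - 2.
\]
This is precisely the numerical input appearing in Raynaud's theorem, while the smoothness of $U$ supplies the duality needed for his argument. Once the translation is made, the theorem follows.

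The main obstacle, if one wishes to understand \emph{why} Raynaud's result holds rather than merely apply it, is the duality step at the heart of his proof. Conceptually, one uses the long exact sequence of local cohomology
\[
\cdots \to H^i_Y(X, \calF) \to H^i(X, \calF) \to H^i(U, \calF|_U) \to H^{i+1}_Y(X, \calF) \to \cdots
\]
together with Serre duality on the smooth variety $U$: the bound $\mathrm{cd}(U) \leq \dim(U) - 2$ forces vanishing of $H^{\dim U - j}(U,\,-)$ for $j \leq 1$ applied to the Serre-dual of $\calF$, which dualizes into vanishing of $H^i_Y(X, \calF)$ for $i \leq 1$, and then the exact sequence shows that the comparison map on $H^0$ is bijective. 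Since this argument is fully carried out in \cite{Ray}, in our write-up the proof reduces to recording the numerical translation above and citing Corollaire 2.8.
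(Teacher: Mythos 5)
Your approach is the same as the paper's: both proofs reduce the statement to Raynaud's Corollaire 2.8 and then translate the hypotheses. However, there is a real gap in the translation step. The paper presents Corollaire 2.8 not as the Lefschetz condition $\operatorname{Lef}(X,Y)$ you describe, but as a comparison isomorphism $H^i(X,\calF)\to H^i(\hat X,\hat\calF)$ valid for $i<d-c-1$, where $c=\acn(U)$ and $d$ is any integer for which the \emph{depth inequality}
\[
\operatorname{depth}_u(\calF) \;\geq\; \inf\bigl(d-c,\; d-\operatorname{trdeg}(\kappa(u)/k)\bigr)
\]
holds at every $u\in U$. To obtain the $H^0$ statement one takes $d=c+2$, and one must then actually \emph{verify} the depth condition: this is where the smoothness of $U$ and the local freeness of $\calF$ enter concretely, via the identity $\operatorname{depth}_u(\calF)=\dim(U)-\operatorname{trdeg}(\kappa(u)/k)$, which together with the hypothesis $c\leq\dim(U)-2$ makes the inequality hold. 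Your write-up skips this verification and instead offers the chain $\mathrm{cd}(U)\leq\acn(U)\leq\dim(U)-2$, which is a true inequality (by RV, Prop.~4.12) but is not the hypothesis of Corollaire 2.8 as invoked here; the cohomological-dimension bound alone does not feed directly into the statement, and the phrase ``the smoothness of $U$ supplies the duality needed'' is too vague to substitute for the missing depth computation. Also, your conceptual sketch of \emph{why} Raynaud's result holds mixes up the relevant dualities (one needs a formal duality pairing $H^i_Y(X,\calF)$ against cohomology on $\hat X$, together with a separate comparison between $H^0(U,\calF)$ and $H^0(\hat X,\hat\calF)$, not a direct Serre-duality argument on $U$), but since you ultimately defer to Raynaud this is not load-bearing. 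The fix is simple: state Corollaire 2.8 in the depth form and carry out the short depth computation as the paper does.
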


\begin{proof}
This is a special case of \cite[Corollaire 2.8]{Ray}.  To see this, we note that the assumptions imply that 
$X$ has a dualizing complex. 
We put $c=\acn(U)$ and let $d$ be an integer such that for all points $u\in U$ we have 
\begin{align}
\label{eq:cond}
\operatorname{depth}_u(\calF) \geq \inf(d-c, d-\operatorname{trdeg} (\kappa(u)/k)),
\end{align}
where $\kappa(u)$ denotes the residue field of $u$. 
Then \cite[Corollaire 2.8]{Ray} states that for all $i<d-c-1$ the canonical morphism 
\[
H^i(X,\calF) \to H^i(\hat X,\hat \calF)
\]
is an isomorphism.

To deduce the claimed result, we put $i=0$ and $d=c+2$. Since $U$ is smooth and $\calF$ is locally free, we have 
\[
\operatorname{depth}_u(\calF) 
= \dim(U)- \operatorname{trdeg} (\kappa(u)/k).
\]
Hence, condition \eqref{eq:cond} simplifies to
\begin{align*}
\dim(U)\geq \inf(2+ \operatorname{trdeg} (\kappa(u)/k), c+2).
\end{align*}
Our assumption $c=\acn(U)\leq \dim(U)-2$ implies that this condition is satisfied. 
\end{proof}

\begin{remark} 
It is an interesting question whether the hypothesis of the theorem can be weakened. For instance, can the upper bound on $\acn(U)$ be replaced by an upper bound on the affine stratification number (see \cite{RV}) or on the cohomological dimension of $U$?
\end{remark}

We now specialize to the case that the ground field is $\C$. By means of GAGA we also have an analogous result for complex analytic spaces.
If $X$ is a scheme of finite type over $\C$,  we denote by $X^{\mathrm{an}}$ its analytification, and let $h:X^{\mathrm{an}}\to X$ be the analytification morphism, see \cite[Expos\'e XII]{SGA1}. If $\calF$ is a coherent sheaf  on $X$, we write $\calF^{\mathrm{an}}= h^*(\calF)$ for its analytification. 


\begin{corollary}
\label{cor:an}
Let $X$ be a scheme over $\C$ which is of finite type, separated, and proper. 
Let $Y\subset X$ be a closed subscheme, and write $\hat X^{\mathrm{an}}$ for the formal completion of $X^{\mathrm{an}}$ along $Y^{\mathrm{an}}$ (in the category of formal complex analytic spaces as in \cite{Ba}).
Assume that the complement $U =X \setminus Y$ is smooth and satisfies 
\[
\acn(U)\leq \dim(U)-2.
\]
Then for every finite locally free $\calO_{X^{\mathrm{an}}}$-module  $\calG$ on $X^{\mathrm{an}}$ the canonical morphism 
\[
H^0(X^{\mathrm{an}},\calG) \to H^0(\hat X^{\mathrm{an}},\hat \calG)
\]
is an isomorphism. Here $\hat \calG$ denotes the completion of $\calG$ along  $Y^{\mathrm{an}}$ in the category of formal complex analytic spaces.
\end{corollary}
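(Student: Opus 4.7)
The strategy is to reduce the analytic statement to the algebraic one (Theorem \ref{thm:alg}) by applying GAGA twice: once to identify $\calG$ with an algebraic coherent sheaf, and a second time on each infinitesimal thickening of $Y$ to identify the formal-analytic sections with the formal-algebraic ones.

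First, since $X$ is proper over $\C$, Serre's GAGA in the form of Grothendieck extends to an equivalence of categories between coherent $\calO_X$-modules and coherent $\calO_{X^{\mathrm{an}}}$-modules, preserving cohomology. Because local freeness of a coherent sheaf on a locally noetherian scheme or analytic space can be detected at completions of stalks, which agree under analytification, this equivalence carries finite locally free $\calO_X$-modules to finite locally free $\calO_{X^{\mathrm{an}}}$-modules. Hence there exists a finite locally free $\calO_X$-module $\calF$ with $\calF^{\mathrm{an}}\cong \calG$, and in particular $H^0(X,\calF)\cong H^0(X^{\mathrm{an}},\calG)$.

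Next, Theorem \ref{thm:alg} applies to $X$, $Y$, and $\calF$ under the given hypothesis $\acn(U)\leq \dim(U)-2$, yielding that the canonical map
\[
H^0(X,\calF) \longrightarrow H^0(\hat X,\hat\calF) \;=\; \varprojlim_n H^0\bigl(X,\calF/\calI^{n+1}\calF\bigr)
\]
is an isomorphism, where $\calI\subset\calO_X$ is the ideal sheaf of $Y$. For each $n$, the sheaf $\calF/\calI^{n+1}\calF$ is coherent on $X$ and supported on a proper closed subscheme, so GAGA gives
\[
H^0\bigl(X,\calF/\calI^{n+1}\calF\bigr) \;\cong\; H^0\bigl(X^{\mathrm{an}},\calG/\calJ^{n+1}\calG\bigr),
\]
where $\calJ\subset\calO_{X^{\mathrm{an}}}$ is the ideal sheaf of $Y^{\mathrm{an}}$, using that analytification commutes with tensor products and quotients.

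Finally, by the definition of sections of the completed sheaf on the formal complex space (see \cite{Ba}), one has
\[
H^0(\hat X^{\mathrm{an}},\hat\calG) \;=\; \varprojlim_n H^0\bigl(X^{\mathrm{an}},\calG/\calJ^{n+1}\calG\bigr).
\]
Taking the inverse limit of the GAGA isomorphisms above and combining with the two previous steps produces the commutative diagram identifying $H^0(X^{\mathrm{an}},\calG) \to H^0(\hat X^{\mathrm{an}},\hat\calG)$ with the algebraic map $H^0(X,\calF)\to H^0(\hat X,\hat\calF)$, which is an isomorphism by Theorem \ref{thm:alg}. The only subtle point is that the limit on the analytic side does genuinely compute $H^0(\hat X^{\mathrm{an}},\hat\calG)$; this is guaranteed by \cite[Theorem 2]{Ba} (coherence of the quotients plus compactness of $X^{\mathrm{an}}$ provides the requisite Mittag--Leffler condition), and is the main place where we use the formal-analytic framework rather than a purely algebraic argument.
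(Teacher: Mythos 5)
Your proof is correct and follows essentially the same route as the paper: apply GAGA to descend $\calG$ to a finite locally free algebraic sheaf $\calF$, invoke Theorem~\ref{thm:alg} for the algebraic completion, apply GAGA to each quotient $\calF/\calI^k\calF$, and identify the inverse limit on the analytic side with $H^0(\hat X^{\mathrm{an}},\hat\calG)$. The extra remarks on detecting local freeness at completed stalks and on the Mittag--Leffler condition are fine but not needed beyond what the paper already records.
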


\begin{proof}
By the GAGA theorem, there exists a unique  finite locally free $\calO_{X}$-module $\calF$ on $X$ such that $\calG= \calF^{\mathrm{an}}$. By Theorem \ref{thm:alg} 
the canonical homomorphism 
\begin{align*}
H^0(X,\calF) \to H^0(\hat X,\hat \calF)
\end{align*}
is an isomorphism. The left hand side is canonically isomorphic to $H^0(X^{\mathrm{an}},\calF^{\mathrm{an}})$.
Let $\calI\subset\calO_X$ be the ideal sheaf defining $Y$.
The canonical homomorphism 
\begin{align}
\label{eq:compl}
H^0(\hat X,\hat \calF)\to \lim_k H^0(X, \calF/\calI^k \calF)
\end{align}
is an isomorphism.
By GAGA for the coherent sheafs $\calF/\calI^k \calF$, the right hand side of \eqref{eq:compl} is isomorphic to 
\[
\lim_k H^0(X^{\mathrm{an}}, (\calF/\calI^k \calF)^{\mathrm{an}}) \cong H^0(\hat X^{\mathrm{an}},(\calF^{\mathrm{an}})\hat{\,}\,).
\]
Putting these maps together, we obtain a natural isomorphism
\[
H^0(X^{\mathrm{an}},\calF^{\mathrm{an}})\to H^0(\hat X^{\mathrm{an}},(\calF^{\mathrm{an}})\hat{\,}\,).
\]
This completes the proof of the corollary.
\end{proof}

\subsection{Algebraization of formal Siegel modular forms}

Here we combine Corollary \ref{cor:an} and Theorem \ref{prop:ffj} to derive a modularity result for formal Siegel modular forms of cogenus $1$. We use the notation of Section \ref{sect:3}. In particular, $\Gamma\subset \Symp_n(\Q)$ denotes an arithmetic subgroup. 

\begin{theorem}
\label{thm:maingen}
Assume that $\acn(X_\Gamma)\leq \frac{n(n+1)}{2} -2$. Then the natural map 
\begin{align}
\label{eq:fjmap}
H^0(X^*_\Gamma, \omega^{\otimes k})\to  \operatorname{FM}^{(n,1)}_k(\Gamma)
\end{align}
taking a modular form to its cogenus $1$ formal Fourier-Jacobi expansions is an isomorphism. 
\end{theorem}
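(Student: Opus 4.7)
The plan is to combine Theorem \ref{prop:ffj} with the algebraization result Corollary \ref{cor:an}. The map \eqref{eq:fjmap} factors as
\[
H^0(X^*_\Gamma, \omega^{\otimes k}) \longrightarrow H^0(\hat X^*_\Gamma, \hat \omega^{\otimes k}) \stackrel{\sim}{\longrightarrow} \operatorname{FM}^{(n,1)}_k(\Gamma),
\]
where the first arrow is the adjunction \eqref{eq:keymap} and the second is the isomorphism from Theorem \ref{prop:ffj}. Since \eqref{eq:keymap} is always injective, only the surjectivity of the first arrow remains, which I would deduce from Corollary \ref{cor:an} applied with $X = X^*_\Gamma$, $Y = \partial X^*_\Gamma$, and $U = X_\Gamma$. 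Here $\dim U = n(n+1)/2$, so the standing hypothesis $\acn(X_\Gamma) \leq n(n+1)/2 - 2$ is exactly the numerical bound $\acn(U) \leq \dim(U) - 2$ demanded by the corollary.

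To meet the remaining hypotheses of Corollary \ref{cor:an}---smoothness of $U$ and local freeness of the coefficient sheaf---I would first pass to a neat normal subgroup $\Gamma' \subset \Gamma$ of finite index with quotient $G = \Gamma/\Gamma'$. Then $X_{\Gamma'}$ is smooth, and since $X_{\Gamma'} \to X_\Gamma$ is finite surjective, pulling back an affine cover yields an affine cover, so $\acn(X_{\Gamma'}) \leq \acn(X_\Gamma) \leq n(n+1)/2 - 2$. To replace $\omega^{\otimes k}$ by a genuine line bundle I would next pass to a smooth projective toroidal compactification $\pi \colon X^{\mathrm{tor}}_{\Gamma'} \to X^*_{\Gamma'}$, on which $(\omega^{\mathrm{tor}})^{\otimes k} = \pi^* \omega^{\otimes k}$ is locally free. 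The Koecher principle gives $\pi_* (\omega^{\mathrm{tor}})^{\otimes k} = \omega^{\otimes k}$, and the Grothendieck comparison theorem for formal complex analytic spaces (as already invoked in the proof of Proposition \ref{prop:compl}) implies that the analogous identity persists after formal completion along the respective boundaries. Hence the comparison map on $X^*_{\Gamma'}$ coincides with the one on $X^{\mathrm{tor}}_{\Gamma'}$, and Corollary \ref{cor:an} applies directly there to yield the required isomorphism for $\Gamma'$.

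To descend from $\Gamma'$ back to $\Gamma$, I would take $G$-invariants on both sides: both $H^0(X^*_\Gamma, \omega^{\otimes k})$ and $\operatorname{FM}^{(n,1)}_k(\Gamma)$ are the $G$-invariant subspaces of the corresponding spaces for $\Gamma'$, the comparison map is $G$-equivariant, and invariance under a finite group is exact on complex vector spaces. The main obstacle I anticipate is the detour through the toroidal compactification: one must verify that proper pushforward commutes with formal completion for $\pi$ in the precise sense needed to identify both sides of the comparison on $X^*_{\Gamma'}$ with those on $X^{\mathrm{tor}}_{\Gamma'}$. This is exactly the role of the Grothendieck comparison theorem, so once that identification is in hand the remaining assembly of the argument should be routine.
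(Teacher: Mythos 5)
Your argument follows the paper's blueprint---factor through $H^0(\hat X^*_\Gamma,\hat\omega^{\otimes k})$, invoke Theorem~\ref{prop:ffj} and Corollary~\ref{cor:an}, then descend from a torsion-free subgroup $\Gamma'$---but you insert an extra detour through a toroidal compactification that the paper does not need. You do this because you worry that $\omega^{\otimes k}$ may fail to be locally free on the singular Satake compactification $X^*_{\Gamma'}$. That concern is legitimate in general, but it is handled more directly: for $\Gamma'=\Gamma_n(N)$ with $N\ge 3$ (which is the choice the paper makes; your ``neat normal subgroup'' plays the same role), the Hodge bundle $\omega$ extends to an \emph{invertible} sheaf on the minimal (Baily--Borel) compactification---this is a standard fact in the Baily--Borel/Faltings--Chai theory, and the paper simply asserts it and applies Corollary~\ref{cor:an} with $X=X^*_{\Gamma'}$, $Y=\partial X^*_{\Gamma'}$, $U=X_{\Gamma'}$. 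Your route via $X^{\mathrm{tor}}_{\Gamma'}$ works, but it costs two additional inputs: (a) the toroidal compactification must be a \emph{projective scheme}, not merely a Moishezon algebraic space as in Section~\ref{sect:3.2} (such projective toroidal compactifications do exist, but this is a nontrivial extra fact you are relying on tacitly), and (b) one must push the comparison maps back down through $\hat\pi_*$ via the Grothendieck comparison theorem, exactly as you describe. Neither step is wrong, but both are avoidable. Your descent from $\Gamma'$ to $\Gamma$ by taking $G$-invariants is correct and formally equivalent to the paper's argument (the paper instead observes that once the modular form $g$ is produced for $\Gamma'$, condition~(i) of Definition~\ref{def:symffj} forces $g\mid_k\gamma=g$ for all $\gamma\in\Gamma$); the $G$-invariant formulation requires checking that $\operatorname{FM}^{(n,1)}_k(\Gamma)=\operatorname{FM}^{(n,1)}_k(\Gamma')^G$, which unwinds correctly using Remark~\ref{rem:class0}, but the paper's hands-on version sidesteps that verification by exploiting the convergence of the Fourier--Jacobi series once $g$ is known to be holomorphic.
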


\begin{proof}
We first assume that $\Gamma=\Gamma_n(N)$ is the principal congruence subgroup of level $N\geq 3$ and that the condition $\acn(X_\Gamma)\leq \frac{n(n+1)}{2} -2$ holds. Then $\Gamma$ acts freely on $\H_n$ and $\omega^k$ is locally free of rank $1$. 
%
The map \eqref{eq:fjmap} is part of the commutative diagram
\[
\xymatrix{
 H^0(X^*_\Gamma, \omega^{\otimes k}) \ar[r] \ar[d] & \operatorname{FM}^{(n,1)}_k(\Gamma)\\
 H^0(\hat X^*_\Gamma, \hat\omega^{\otimes k})\ar[ur] &},
\]
where the diagonal arrow is given by \eqref{eq:mapffj}. According to Theorem \ref{prop:ffj} this diagonal map is an isomorphism. Moreover, by Corollary \ref{cor:an} the vertical map is an isomorphism. This implies the assertion.

Now we consider the case that $\Gamma$ is an arbitrary arithmetic subgroup of $\Symp_n(\Q)$.
We choose a principal congruence subgroup $\Gamma'$ of level $N\geq 3$ such that  $\Gamma'\subset \Gamma$. Since finite morphisms are affine, we have 
$\acn(X_{\Gamma'})\leq \acn(X_{\Gamma})$.
The covering map $\pi: X^*_{\Gamma'}\to  X^*_{\Gamma}$ induces a pull back map 
\begin{align}
\label{eq:pbf}
\pi^*: \operatorname{FM}^{(n,1)}_k(\Gamma)\to \operatorname{FM}^{(n,1)}_k(\Gamma').
\end{align}
Hence, if $f=(f_F)_{F\in I_{n-1}}\in  \operatorname{FM}^{(n,1)}_k(\Gamma)$, 
we may apply 
the above argument to deduce that $\pi^*(f)\in\operatorname{FM}^{(n,1)}_k(\Gamma')$ is the image of an element $g\in  H^0(X^*_{\Gamma'}, \omega^{\otimes k})$. 
Then at each boundary component $F\in I_{n-1}$ the series $f_F$ is the (convergent!) Fourier-Jacobi expansion as in \eqref{eq:fsgen} of the 
holomorphic modular form $g$ for $\Gamma'$. But now condition (i) of 
Definition \ref{def:symffj} implies that 
\[
g\mid_k \gamma = f_F\mid_k \gamma = f_{\gamma^{-1} F} = g
\]
for every $\gamma \in \Gamma$. Hence $g\in H^0(X^*_\Gamma, \omega^{\otimes k})$, concluding the proof of the corollary.
\end{proof}

\begin{corollary}
\label{cor:maingen}
Assume that $2\leq n \leq 4$. Then the natural map \eqref{eq:fjmap}
is an isomorphism. 
\end{corollary}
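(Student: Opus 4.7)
The plan is to deduce the corollary from Theorem \ref{thm:maingen} by first applying it to a subgroup lying in $\Gamma_n(2)$, where the affine covering number bound of Corollary \ref{cor:acnbound} is available, and then descending to $\Gamma$ exactly as in the last paragraph of the proof of Theorem \ref{thm:maingen}.

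First, I would verify the elementary inequality $2^{n-1} - 1 \leq n(n+1)/2 - 2$ for $n \in \{2,3,4\}$: the two sides evaluate to $(1,1)$, $(3,4)$, $(7,8)$ respectively, so the inequality (barely) holds throughout this range. Consequently, whenever $\Gamma \subset \Gamma_n(2)$ is an arithmetic subgroup and $2 \leq n \leq 4$, Corollary \ref{cor:acnbound} gives $\acn(X_\Gamma) \leq 2^{n-1} - 1 \leq n(n+1)/2 - 2$, and Theorem \ref{thm:maingen} directly yields the corollary in this case.

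For an arbitrary arithmetic $\Gamma \subset \Symp_n(\Q)$, I would pass to $\Gamma' = \Gamma \cap \Gamma_n(2)$. Commensurability of $\Gamma$ with $\Gamma_n$ ensures that $\Gamma'$ is an arithmetic subgroup of finite index in $\Gamma$, and by construction $\Gamma' \subset \Gamma_n(2)$. The previous step gives the isomorphism \eqref{eq:fjmap} for $\Gamma'$, and given $f = (f_F)_{F \in I_{n-1}} \in \operatorname{FM}^{(n,1)}_k(\Gamma)$, viewed as an element of $\operatorname{FM}^{(n,1)}_k(\Gamma')$ (condition (i) of Definition \ref{def:symffj} for $\Gamma$ is a fortiori satisfied for $\Gamma' \subset \Gamma$, and condition (ii) is independent of the group), this produces a holomorphic modular form $g \in H^0(X^*_{\Gamma'}, \omega^{\otimes k})$ whose convergent Fourier-Jacobi expansion at each $F \in I_{n-1}$ equals $f_F$. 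To descend to $\Gamma$, I would apply condition (i) of Definition \ref{def:symffj} for the original $\Gamma$: for every $\gamma \in \Gamma$,
\[
g \mid_k \gamma \;=\; f_F \mid_k \gamma \;=\; f_{\gamma^{-1} F} \;=\; g,
\]
forcing $g \in H^0(X^*_\Gamma, \omega^{\otimes k})$. This gives surjectivity of \eqref{eq:fjmap}; injectivity is automatic, since a holomorphic modular form is determined by its Fourier-Jacobi expansion at any single rational boundary component.

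There is no substantial obstacle, as the work is done entirely by Theorem \ref{thm:maingen}, Corollary \ref{cor:acnbound}, and the descent argument already present in the proof of Theorem \ref{thm:maingen}. The only delicate point is the numerical inequality above, which fails at $n = 5$ (reading $15 \leq 13$) and precisely explains the restriction $n \leq 4$ in the hypothesis.
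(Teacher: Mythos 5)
Your proposal is correct and follows the same route as the paper: for $\Gamma \subset \Gamma_n(2)$ combine Theorem~\ref{thm:maingen} with Corollary~\ref{cor:acnbound}, and for arbitrary arithmetic $\Gamma$ pass to $\Gamma' = \Gamma \cap \Gamma_n(2)$ and descend exactly as in the final paragraph of the proof of Theorem~\ref{thm:maingen}. The only addition you make is the explicit check of the inequality $2^{n-1}-1 \leq n(n+1)/2 - 2$ for $n = 2,3,4$, which the paper leaves implicit; your observation that it fails at $n = 5$ (giving $15 \not\leq 13$) correctly pinpoints where the method breaks down.
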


\begin{proof}
If $\Gamma$ is contained in the full Siegel modular group $\Gamma_n$, the assertion follows from  Theorem \ref{thm:maingen} combined with the bounds on $\acn(X_{\Gamma_n})$ of Section \ref{sect:4.1}.

Otherwise, we consider the auxiliary congruence subgroup $\Gamma'= \Gamma\cap \Gamma_n$. The covering map $\pi: X^*_{\Gamma'}\to  X^*_{\Gamma}$ induces a pull back  map as in \eqref{eq:pbf}. Pulling back $f\in \operatorname{FM}^{(n,1)}_k(\Gamma)$,  
we find that $\pi^*(f)$ is the image of an element $g\in  H^0(X^*_{\Gamma'}, \omega^{\otimes k})$.  As in the proof of Theorem~\ref{thm:maingen} we conclude that $g$ actually descends to an element of $H^0(X^*_{\Gamma}, \omega^{\otimes k})$.
\end{proof}

\begin{remark}
i) Theorem \ref{thm:maingen} and Corollary \ref{cor:maingen} have natural generalizations to vector valued modular forms transforming with a finite dimensional representation of $\Gamma$. Alternatively, one can deduce such results for vector valued forms from those for scalar forms by means of the argument of \cite{Br1}.

ii) Using induction on the cogenus as in \cite[Lemma 5.2]{BR} one can also deduce an analogue for formal Siegel modular forms of higher cogenus $l<n$.

iii) The above results are also valid in the slighltly more general case when $\Gamma$ is an arithmetic subgroup of $\Symp_n(\R)$ (rather than of $\Symp_n(\Q)$), that is, a subgroup which is commensurable with $\Gamma_n$. Note that by \cite{Ch} the elements of such a subgroup are automatically projective rational. 
\end{remark}

\subsection{The case of the paramodular group}
\label{sect:4.4}

Here we apply Theorem \ref{thm:maingen} to prove the modularity of certain formal Fourier-Jacobi series for the paramodular group of genus $2$.

Throughout this subsection we let $n=2$ and let $N$ be a positive integer. Recall that the paramodular group of level $N$ is the arithmetic subgroup $K(N)\subset \Symp_2(\Q)$ consisting of  matrices of the form 
\begin{align}
\label{eq:KN}
\begin{pmatrix}
* & N* & * & *\\
* & * & * & */N\\
* & N* & * & *\\
N* & N* & N* & *
\end{pmatrix},
\end{align}
where the stars stand for integral entries.
For every exact divisor $d||N$ we put $d'=N/d$ and choose $\alpha,\beta,\gamma,\delta\in \Z$ such that 
$
\alpha\delta d-\beta\gamma d'=1
$. 
Then the matrix 
\[
V_d = \frac{1}{\sqrt{d}}
\begin{pmatrix}
d\delta & -N\gamma & 0 & 0\\
-\beta & d\alpha & 0 & 0\\
0 & 0 & d\alpha  & \beta \\
0 & 0 & N \gamma  & d\delta 
\end{pmatrix}
\]
belongs to $\Symp_2(\R)$ and is projective rational. The coset $V_d K(N)$ is independent of the choices of the parameters. Moreover, 
we have $ V_d^2\in K(N)$ and $V_d K(N) V_d= K(N)$. 
In the special case where $d=N$, we may choose $\beta=1$, $\gamma=-1$, and $\alpha=\delta=0$. Then we obtain for $V_N$ the (projective) involution 
\[
\mu_N = \frac{1}{\sqrt{N}}
\begin{pmatrix}
0 & N & 0 & 0\\
-1 & 0 & 0 & 0\\
0 & 0 & 0 & 1\\
0 & 0 & -N & 0
\end{pmatrix}.
\]

We denote by $K(N)^*$ the arithmetic subgroup of $\Symp_2(\R)$ which is generated by $K(N)$ and the $V_d$ for $d||N$. It contains $K(N)$ as a normal sugbroup, and 
\[
K(N)^*/K(N) \cong (\Z/2\Z)^{\nu(N)},
\]
where $\nu(N)$ denotes the number of positive exact divisors of $N$, see e.g.~\cite{GH}.

The structure of the rational boundary components of $K(N)$ and $K(N)^*$ is known, see e.g.~\cite{PY}. If $N$ is square-free, the situation is particularly simple. 
Then for $K(N)^*$ there is exactly one orbit of $0$-dimensional rational boundary components and one orbit of $1$-dimensional rational boundary components. 
For the group $K(N)$ there is one orbit of $0$-dimensional rational boundary components, and there are $\nu(N)$ orbits on $1$-dimensional rational boundary components. The orbits can be represented by the $V_d F_1$ for $d|| N$, and these representatives intersect exactly in the standard $0$-dimensional boundary component $F_0$. Moreover, we have 
$K(N)^*_{F_1}= K(N)_{F_1}$, and $K(N)^*_{F_0}$ is obtained from $K(N)_{F_0}$ by extending with the involutions $V_d$.


Let $f$ be a formal Fourier-Jacobi series of weight $k$ for the standard boundary component $F_1$ and the group $K(N)$. Denote by 
\begin{align*}
f(\tau) = \sum_{T\in \Sym_{2}(\Q)} a(T)  \,e(\tr T \tau)
\end{align*} 
its formal Fourier expansion at the boundary component $F_0$ as in \eqref{eq:fouf}.
We assume that $f$ is {\em strongly symmetric} in the following strong sense: There exists a character $\chi_f: K(N)^*/K(N)\to \{\pm 1\}$ such that 
\begin{align}
\label{eq:strongsymm0}
f\mid_k \gamma = \chi_f(\gamma) f
\end{align} 
for all $\gamma\in K(N)^*_{F_0}$. This condition can be rephrased as 
\begin{align}
\label{eq:strongsymm}
a(T[{}^t u]) = \chi_f\zxz{{}^tu^{-1}}{0}{0}{u}\det(u)^k a(T) 
\end{align} 
for all $T\in \Sym_2(\Z)^\vee$ positive semi-definite with $N\mid T_2$, and for all $u\in \Gamma_0(N)^*\subset \Sl_2(\R)$. Here the latter group denotes the extension of $\Gamma_0(N)$ by all Atkin-Lehner involutions $W_d$ with $d||N$. We obtain the following result.

\begin{theorem}
\label{thm:KN}
Let $N$ be a square-free positive integer.
Let $f$ be a strongly symmetric formal Fourier-Jacobi series of weight $k$ for the boundary component $F_1$ and the group $K(N)$. Then $f$ converges and defines the Fourier-Jacobi expansion of a 
modular form in $M_k(K(N))$.
\end{theorem}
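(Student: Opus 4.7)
The plan is to apply Corollary \ref{cor:maingen} with $n=2$. For this, I will construct a cogenus-$1$ formal Siegel modular form $\tilde g = (\tilde g_F)_{F\in I_1} \in \operatorname{FM}^{(2,1)}_k(K(N))$ with $\tilde g_{F_1} = f$; since $n=2 \leq 4$, the corollary then produces a holomorphic paramodular form $\mathcal{F} \in M_k(K(N))$ whose cogenus-$1$ Fourier--Jacobi expansion at $F_1$ equals $f$, establishing in particular the convergence of $f$. To construct $\tilde g$, I use that the $\nu(N)$ $K(N)$-orbits on $I_1$ are represented by $V_d F_1$ for $d\|N$ and that each $V_d$ normalises $K(N)$. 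I set
\[
\tilde g_{V_d F_1} \;:=\; \chi_f(V_d)\,f\mid_k V_d^{-1},
\]
which is a formal Fourier--Jacobi series of weight $k$ for the boundary $V_d F_1$ under the group $V_d K(N) V_d^{-1} = K(N)$, invariant under $K(N)_{V_d F_1} = V_d K(N)_{F_1} V_d^{-1}$. For $F = \gamma^{-1} V_d F_1$ with $\gamma \in K(N)$ I put $\tilde g_F := \tilde g_{V_d F_1}\mid_k \gamma$; well-definedness and condition~(i) of Definition~\ref{def:symffj} are immediate from the $K(N)_{F_1}$-invariance of $f$.

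The crux is compatibility condition~(ii). Using condition~(i) and the transitivity of $K(N)$ on $I_0$, it suffices to show that $(\tilde g_F)_{F_0} = (f)_{F_0}$ for every $F \in I_1$ with $F > F_0$. The key orbit-theoretic input is that, within each $K(N)$-orbit $K(N)\cdot V_d F_1$, the subset of components adjacent to $F_0$ forms a single $K(N)_{F_0}$-orbit, represented by $V_d F_1$ itself. Indeed, $K(N)_{F_0}$ acts on $\{F' \in I_1 : F' > F_0\} \cong \P^1(\Q)$ through the standard $\Gamma_0(N)$-action on $\P^1(\Q)$, whose orbits are the $\nu(N)$ cusps of $\Gamma_0(N)$; this count matches the number of $K(N)$-orbits on $I_1$, so by pigeonhole each $K(N)$-orbit contributes exactly one $K(N)_{F_0}$-orbit of adjacent components. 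Consequently every such $F$ may be written $F = \eta_0 V_d F_1$ with $\eta_0 \in K(N)_{F_0}$, giving $\tilde g_F = \chi_f(V_d)\, f\mid_k V_d^{-1}\eta_0^{-1}$. Since $V_d^{-1}\eta_0^{-1} \in K(N)^*_{F_0}$, the strong symmetry hypothesis (in its equivalent form $(f\mid_k\gamma)_{F_0} = \chi_f(\gamma)(f)_{F_0}$ for $\gamma \in K(N)^*_{F_0}$) yields
\[
(\tilde g_F)_{F_0} \;=\; \chi_f(V_d)\,\chi_f(V_d^{-1}\eta_0^{-1})\,(f)_{F_0} \;=\; \chi_f(V_d)^2\,\chi_f(\eta_0)\,(f)_{F_0} \;=\; (f)_{F_0},
\]
using $\chi_f(\eta_0) = 1$ (as $\eta_0 \in K(N)$) and $\chi_f^2 = 1$.

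The most delicate step is this final compatibility verification, which combines the Atkin--Lehner orbit decomposition of the cusps of $\Gamma_0(N)$ (enabling the reduction to $\eta_0 \in K(N)_{F_0}$) with the strong symmetry on $f$ (absorbing the character factors). Granted these, the family $\tilde g$ lies in $\operatorname{FM}^{(2,1)}_k(K(N))$, and Corollary~\ref{cor:maingen} immediately produces the desired paramodular form $\mathcal{F} \in M_k(K(N))$ with Fourier--Jacobi expansion $f$ at $F_1$.
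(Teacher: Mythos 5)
Your proof is correct and follows essentially the same strategy as the paper: set $f_F := \chi_f(\delta)\, f\mid_k \delta$ for any $\delta\in K(N)^*$ with $F=\delta^{-1}F_1$ (which is exactly what you do after unpacking $V_d^{-1}$ and the $K(N)$-translate), verify that $(f_F)_F$ lies in $\operatorname{FM}^{(2,1)}_k(K(N))$, and then invoke Corollary~\ref{cor:maingen}. The paper dismisses the verification of Definition~\ref{def:symffj} with ``it is easily checked''; your contribution is to fill this in carefully, in particular the compatibility condition~(ii), via the orbit-count argument. Your pigeonhole comparison — $\nu(N)$ $K(N)$-orbits on $I_1$ versus $\nu(N)$ cusps of $\Gamma_0(N)$ giving the $K(N)_{F_0}$-orbits on $\{F>F_0\}$ — is a clean way to conclude that $K(N)^*_{F_0}$ acts transitively on $\{F>F_0\}$, which is the implicit fact the paper needs. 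This supplies an explicit representative $\eta_0 V_d\in K(N)^*_{F_0}$ for each adjacent $F$, after which the strong symmetry hypothesis absorbs all character factors exactly as you compute. The remaining details (well-definedness, condition~(i), $F_1>F_0$, $V_dF_1>F_0$) you correctly reduce to the $K(N)_{F_1}$-invariance of $f$, the normality of $K(N)$ in $K(N)^*$, and the transitivity of $K(N)$ on $I_0$.
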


\begin{proof}
We define a formal Siegel modular form for $K(N)$ as follows. For any rational boundary component $F\in I_1$ we choose $\delta\in K(N)^*$ such that $F=\delta^{-1} F_1$ and put 
\[
f_F := \chi_f(\delta) f\mid_k \delta.
\] 
By Remark \ref{rem:class}, $f_F$ is a formal Fourier-Jacobi series for the boundary component $F$ and the group $K(N)$. It is easily checked that the family $(f_F)_{F\in I_1}$ defines formal Siegel modular form of weight $k$ for $K(N)$.
According to Corollary \ref{cor:maingen} it must come from a holomorphic Siegel modular form in $M_k(K(N))$.
\end{proof}

\begin{remark}
It suffices to check condition \eqref{eq:strongsymm} for $u$ running through a set of generators of $\Gamma_0(N)^*$. Therefore, Theorem~\ref{thm:KN} may be useful for computations with Siegel modular forms for the paramodular group.
For instance, if $N=1,2,3$, then $\Gamma_0(N)^*$ is generated by the translation matrix $\kzxz{1}{1}{0}{1}$ and the Fricke involution. Hence, in this case the strong symmetry condition follows from the formal modularity of $f$ for $K(N)_{F_1}$ and from \eqref{eq:strongsymm0} for the {\em single} matrix $\gamma=\mu_N$. This recovers part of the main result (Theorem 1.2) of \cite{IPY}. 
\end{remark}

In \cite{IPY} the authors ask whether for general $N$ it actually suffices to require \eqref{eq:strongsymm0} for the single element $\mu_N$. More precisely, they consider a formal Fourier-Jacobi series $f$ of weight $k$ for the standard boundary component $F_1$ and the group $K(N)$ as above. They say that $f$ satisfies the {\em involution condition}, if there is an $\eps\in {\pm 1}$ such that the formal Fourier expansion of $f$ satisfies  
$f\mid_k \mu_N = \eps f$. In terms of the Fourier coefficients this means 
\begin{align}
\label{eq:weaksymm}
a\zxz{T_2/N}{-T_{12}}{-T_{12}}{N T_1} = \eps \cdot a\zxz{T_1}{T_{12}}{T_{12}}{T_2}
\end{align} 
for all $T=\kzxz{T_1}{T_{12}}{T_{12}}{T_2}\in L_{F_1}^\vee$.
It is asked in the introduction of loc.~cit.\ whether any formal Fourier-Jacobi series $f$ as before which satisfies the involution condition is the expansion of a classical holomorphic modular form of weight $k$ for the group $K(N)$?

When $N$ is prime, one could try to approach this question by using the fact that $K(N)^*$ is generated by $K(N)_{F_1}$ and $\mu_n$ to show that any formal Fourier-Jacobi series $f$  which satisfies the involution condition \eqref{eq:weaksymm} is actually strongly symmetric in the sense of  \eqref{eq:strongsymm0}. To this end one would have to show that for any $\gamma\in K(N)_{F_0}$ the formal Fourier-Jacobi series $f_{F_1}$ and $f_{\gamma F_1}$ are compatible at $F_0$.



\end{document}